\documentclass[a4paper]{amsart}

\usepackage{amsmath,amsthm,amssymb,latexsym,epic,bbm,comment, mathrsfs}

\usepackage{graphicx,enumerate,stmaryrd,color}

\usepackage[all]{xy}

\usepackage[active]{srcltx}

\usepackage[parfill]{parskip}

\usepackage{enumerate}

\newtheorem{theorem}{Theorem}
\newtheorem*{thm}{Theorem}

\newtheorem{lemma}[theorem]{Lemma}

\newtheorem{corollary}[theorem]{Corollary}

\newtheorem{proposition}[theorem]{Proposition}

\newtheorem{conjecture}[theorem]{Conjecture}

\newtheorem{example}[theorem]{Example}

\newtheorem{remark}[theorem]{Remark}

\theoremstyle{definition}
\newtheorem{definition}[theorem]{Definition}

\title{$J$-equivalence for associative algebras}
\author{Helena Jonsson}

%\abstract{For associative algebras $A$ and $B$ we define $A\geq_J B$ if  he regular bimodule $A$ is a direct summand of $M\otimes_B N$ for some bimodules $M$ and $N$, and call the induced equivalence relation $J$-equivalence.
%We prove that, under certain natural conditions, the subalgebra of invariants under the action of an abelian group on an algebra is $J$-equivalent to the algebra itself. Further we prove that any factor algebra of 
%Moreover we generalize Peacocks work on separable equivalence to $J$-equivalence and prove that it preserves representation type.}

\begin{document}

\begin{abstract}
We study the combinatorics of an analogue of 
Green's $\mathcal{J}$-relation (a.k.a. the two-sided relation) 
for the bicategory of finite-dimensional bimodules over finite-dimensional
associative algebras over a fixed field. In particular,
we provide a number of exameples for both
$\mathcal{J}$-equivalent and  inequivalent but $\mathcal{J}$-comparabe algebras.
\end{abstract}

\maketitle

\section{Introduction}

If one wants to understand the structure of some semigroup, say $S$, the first question one asks
is how the Green's relations for $S$ look like. Green's relations $\mathcal{L}$,
$\mathcal{R}$ and $\mathcal{J}$ are equivalence relations on $S$ that describe equivalence 
classes of principal left, right and two-sided ideals, see \cite{Gr}.

More generally, analogues of Green's relations can be similarly defined in the setup
of additive bicategories, see \cite{MM1,MM2}, or multisemigroups, see \cite{KM}.
One natural example of an additive bicategory appearing in representation theory
is that of finite-dimensional bimodules over finite-dimensional
associative algebras over a fixed field  $\Bbbk$. This bicategory,
which we denote by $\mathscr{BM}_\Bbbk$, has, as objects,
all  finite-dimensional associative $\Bbbk$-algebras. The $1$-morphisms
are finite-dimensional bimodules, the $2$-morphisms are bimodule homomorphism
and the horizontal composition is given by tensor product.

The aim of this paper is to make some first steps in understanding the 
combinatorics of the $\mathcal{J}$-relations in this bicategory, that is for
tensor product of bimodules over associative algebras. Our interest in this object
stems from its relevance and importance in $2$-representation theory, see, for example,
\cite{MM1,MMMTZ} and references therein. In particular, for a natural class of 
bicategories, the corresponding $\mathcal{J}$-equivalence classes serve as a
crude invariant, called the \emph{apex}, see \cite{CM}, that is used in 
classification of ``simple'' birepresenatations. Therefore understanding
the structure of the $\mathcal{J}$-relation is very useful.

For example, in \cite{Jo3} and \cite{JS}, the author, together with M.~Stroi{\'n}ski, 
classified ``simple'' birepresentations for all but one apex in the case of
the bicategory of bimodules over certain radical square zero Nakayama algebras,
which include the algebra of dual numbers. These results rely heavily on the 
explicit description of the cell combinatorics of such bimodules, provided in \cite{Jo2}.
The fact that such an explicit description is at all possible is due to tameness 
of the category of bimodules over these radical square zero Nakayama algebras.

Unfortunately, for most algebras, the category of finite-dimensional bimodules 
is wild, meaning that, at the present stage, it is impossible to classify all
indecomposable objects. Indeed, it was proven in \cite[Theorems~7.1,7.2]{LS2} that, 
up to Morita equivalence and direct sums, the only tame (including finite representation type) 
bimodule categories appear for radical square zero Nakayama algebras, along with 
five additional algebras. This means that, in most cases, the brute force 
approach of \cite{Jo2} to understand the $\mathcal{J}$-relation is not possible.
Therefore the problem to understand the $\mathcal{J}$-relation in this setup
is rather non-trivial.

In this paper we make some first steps in understanding the combinatorics of 
the $\mathcal{J}$-relation for the bicategory $\mathscr{BM}_\Bbbk$.

For example, we prove the following general result, see Theorems \ref{thm_group} and \ref{thm_skew}.
\begin{thm}
Let $\Bbbk$ be a field, $A$ a $\Bbbk$-algebra, and $G$ a finite group acting on $A$ via suitable automorphisms. Assume further that the characteristic of $\Bbbk$ does not divide the order of $G$.
Then $A$ is $J$-equivalent both to the skew-group algebra $A\ast G$ and the subalgebra of invariants $A^G$.
\end{thm}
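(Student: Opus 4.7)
The strategy is to prove the two $J$-equivalences separately, in each case by exhibiting bimodules whose tensor compositions contain the appropriate identity bimodule as a direct summand.

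For $A \sim_J A \ast G$, I would take $M = {}_{A \ast G}(A \ast G)_A$ and $N = {}_A (A \ast G)_{A \ast G}$, where the $A$-actions come from the inclusion $A \hookrightarrow A \ast G$. The composite $N \otimes_{A \ast G} M$ is $A \ast G$ viewed as an $A$-bimodule, and the decomposition $A \ast G \cong \bigoplus_{g \in G} A g$ exhibits $A$ as a direct summand via projection onto the $g = e$ component. For the reverse containment I would split the multiplication $\mu : (A \ast G) \otimes_A (A \ast G) \to A \ast G$ using the separability element
\[
\varepsilon = \tfrac{1}{|G|}\sum_{g \in G} g \otimes g^{-1}.
\]
The identity $\mu(\varepsilon) = 1$ uses invertibility of $|G|$, and bicentrality of $\varepsilon$ reduces to moving elements of $A$ across the $A$-balanced tensor (exploiting $g^{-1} a = g^{-1}(a) g^{-1}$ inside $A \ast G$) together with a re-indexing of the group sum. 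Then $x \mapsto x\varepsilon$ is an $(A \ast G)$-bimodule section of $\mu$, exhibiting $A \ast G$ as a direct summand of $M \otimes_A N$.

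For $A \sim_J A^G$, I would take $M' = {}_A A_{A^G}$ and $N' = {}_{A^G} A_A$ with $A^G$ acting via $A^G \hookrightarrow A$. One direction is immediate: the averaging trace $\mathrm{tr}_G(a) = \tfrac{1}{|G|}\sum_g g(a)$ is an $A^G$-bimodule retraction of the inclusion and so exhibits $A^G$ as a summand of $N' \otimes_A M' = A$. The reverse containment, that $A$ is a summand of $A \otimes_{A^G} A$ as an $A$-bimodule, is the main obstacle: it amounts to $A$ being a separable $A^G$-algebra, which is not automatic for arbitrary $G$-actions. This is precisely where the word \emph{suitable} in the hypothesis must do its work, typically by supplying Galois-type data $\{x_i, y_i\} \subset A$ with $\sum_i x_i g(y_i) = \delta_{g, e}$; the element $\sum_i x_i \otimes y_i \in A \otimes_{A^G} A$ then provides the required bicentral splitting of the multiplication.

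A cleaner alternative for the invariant case, once $A \sim_J A \ast G$ is in hand, is to prove $A \ast G \sim_J A^G$ directly and invoke transitivity. For this I would use the averaging idempotent $\mathbf{e} = \tfrac{1}{|G|}\sum_g g \in A \ast G$ together with the classical identification $\mathbf{e}(A \ast G)\mathbf{e} \cong A^G$; the corner bimodules $(A \ast G)\mathbf{e}$ and $\mathbf{e}(A \ast G)$ then form a Morita-style context between $A \ast G$ and $A^G$, and under the ``suitable'' assumption the idempotent $\mathbf{e}$ is full enough to produce both required summand inclusions.
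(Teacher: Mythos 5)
Your treatment of the skew group algebra is correct and is essentially an explicit, self-contained version of what the paper does: the proof of Theorem~\ref{thm_skew} simply cites \cite[Theorem~1.1]{RR} for the facts that $A$ is a direct summand of $A\ast G$ as an $A$-$A$-bimodule and that the multiplication $(A\ast G)\otimes_A(A\ast G)\to A\ast G$ is a split epimorphism of bimodules, whereas you exhibit the splitting directly via the separability element $\tfrac{1}{|G|}\sum_g g\otimes g^{-1}$; your bicentrality check (using $g^{-1}a=g^{-1}(a)g^{-1}$ and reindexing the sum) is the standard one and goes through. For the inequality $A\leq_J A^G$, your averaging trace $\mathrm{tr}_G$ is a correct $A^G$-$A^G$-bimodule retraction of the inclusion $A^G\hookrightarrow A$, and it is actually more general than the paper's argument, which decomposes $A$ into $G$-isotypic components and therefore needs $G$ abelian (a hypothesis the paper imposes throughout its treatment of invariants).

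The genuine gap is the direction $A^G\leq_J A$, which you correctly identify as the hard step but do not carry out. In the paper, ``suitable'' means concretely: $A$ is basic and the induced action of $G$ on the underlying quiver of $A$ is free (Theorem~\ref{thm_group}$(ii)$). Under these hypotheses the paper constructs an explicit isomorphism of $A$-$A$-bimodules $A\otimes_{A^G}A\to\bigoplus_{g\in G}{}^gA$ given by $a\otimes b\mapsto\sum_{g}g(a)1_gb$; surjectivity is easy, and injectivity follows from a dimension count using that $A$ is free of rank $|G|$ over $A^G$ (Burnside's lemma applied to the free quiver action gives $\dim A^G=\dim A/|G|$). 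The summand ${}^eA=A$ then yields $A^G\geq_J A$. Your proposed Galois-type data $\sum_i x_i g(y_i)=\delta_{g,e}$ is equivalent to this isomorphism and would work, but as written it is an assumption rather than a proof; likewise your corner-idempotent alternative requires fullness of $\mathbf{e}$ in $A\ast G$, which is again precisely what the freeness hypothesis is there to guarantee. So your proposal has the right architecture, but the one nontrivial verification --- that the stated hypotheses on the action actually produce the required splitting of $A\otimes_{A^G}A$ --- is left unproven.
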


Here is one of many explicit examples of $J$-(in)equivalence we prove in Section~\ref{s_examples}.
\begin{thm}
Let $\Bbbk$ be an algebraically closed field. Denote by $A_n$ the path algebra of the uniformly oriented Dynkin diagram of type $A_n$ modulo the square of its radical. Then the following chain of strict $J$-inequalities hold.
\begin{displaymath}
A_2>_J A_3>_J A_4>_J \ldots >_J \Bbbk [x]/(x^2)
\end{displaymath}
\end{thm}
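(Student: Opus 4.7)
The plan is to prove each strict inequality in two stages: first, construct bimodules witnessing the required $J$-comparability; second, establish the strict part via a $J$-invariant.

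For the $J$-comparability $A_n \geq_J A_{n+1}$, exploit the fact that $A_n$ is realized as an idempotent subalgebra of $A_{n+1}$. Let $e = e_1 + \cdots + e_n \in A_{n+1}$ be the sum of the primitive idempotents at the first $n$ vertices. A direct computation shows $eA_{n+1}e \cong A_n$, since the quiver of $eA_{n+1}e$ is obtained from that of $A_{n+1}$ by deleting the vertex $n+1$ and the arrow into it. Setting $X = eA_{n+1}$ (an $A_n$-$A_{n+1}$-bimodule via the above identification) and $Y = A_{n+1}e$ (an $A_{n+1}$-$A_n$-bimodule), we have $X \otimes_{A_{n+1}} Y \cong eA_{n+1}e \cong A_n$. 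This realizes the regular $A_n$-bimodule as a tensor product factoring through $A_{n+1}$, giving the required $J$-comparability. For the terminal comparability $A_n \geq_J \Bbbk[x]/(x^2)$, the idempotent construction does not apply directly, since no primitive idempotent of $A_n$ gives a copy of $\Bbbk[x]/(x^2)$. Instead, my plan is to use bimodules built from the nilpotent element $a = a_1 + \cdots + a_{n-1} \in A_n$ (which satisfies $a^2 = 0$), equipping suitable modules over $A_n$ with compatible $\Bbbk[x]/(x^2)$-actions so that $\Bbbk[x]/(x^2)$ arises as a summand of the appropriate tensor product.

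For the strictness of each inequality, the goal is to identify a $J$-invariant that separates $A_n$ from $A_{n+1}$ and from $\Bbbk[x]/(x^2)$. The naive count of simple modules is \emph{not} a $J$-invariant, as the preceding skew-group algebra theorem already demonstrates (since $A$ and $A^G$ typically have distinct numbers of simples yet are $J$-equivalent). A more refined invariant is therefore needed. My plan is to exploit the explicit classification of indecomposable $A_n$-$A_n$-bimodules from \cite{Jo2}, enabled by the tameness of the bimodule category of $A_n$ established in \cite{LS2}. With this classification in hand, one can analyze the possible tensor products $X \otimes_{A_n} Y$ of bimodules and extract a combinatorial or dimensional invariant of the $J$-cells of $A_n$ that changes strictly along the chain, thereby obstructing any reverse $J$-comparability.

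The main obstacle is the strictness: while the existence of bimodules witnessing comparability is straightforward, showing that no pair of bimodules $X$ and $Y$ over $A_n$ can produce $A_{n+1}$ as a summand of $X \otimes_{A_n} Y$ (and similarly that no pair of bimodules over $\Bbbk[x]/(x^2)$ can produce any $A_n$ with $n \geq 2$) is a negative statement requiring a genuine obstruction. The combinatorial machinery from \cite{Jo2} should provide the right framework, but identifying the precise numerical or structural invariant controlling the $J$-class and verifying its monotonicity along the chain is where the real technical work lies.
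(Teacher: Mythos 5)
Your idempotent construction for the comparabilities $A_n\geq_J A_{n+1}$ is correct and is genuinely different from the paper's: the paper instead uses the surjection $A_{n+1}\twoheadrightarrow A_n$ and shows (Proposition~\ref{prop_surj}) that $A_n$ splits off $A_n\otimes_{A_{n+1}}A_n$; your corner-algebra argument $eA_{n+1}\otimes_{A_{n+1}}A_{n+1}e\cong eA_{n+1}e\cong A_n$ is an equally valid route to the same non-strict inequalities. Everything else in the proposal, however, is a plan rather than a proof, and it contains two genuine gaps.

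First, the comparability $A_n\geq_J \Bbbk[x]/(x^2)$ is not established, and the route you sketch fails in its most natural reading. Write $D=\Bbbk[x]/(x^2)$ and restrict the regular bimodule along the unital embedding $D\hookrightarrow A_n$, $x\mapsto a=a_1+\cdots+a_{n-1}$; then $D$ is \emph{not} a direct summand of ${}_D(A_n)_D$. Indeed, a $D$-$D$-bimodule retraction $\pi\colon A_n\to\mathrm{span}\{1,a\}$ would send $e_i\mapsto\alpha_i\cdot 1+\beta_i a$ with $\pi(a_i)=a\pi(e_i)=\pi(e_{i+1})a$ forcing all $\alpha_i$ equal, while $e_1a=0$ and $ae_n=0$ force $\alpha_1=\alpha_n=0$, contradicting $\sum_i\alpha_i=1$. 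The paper instead takes a detour (Proposition~\ref{prop_lambda} and Lemma~\ref{lemma_DA}): $D$ is the invariant subalgebra of the self-injective Nakayama algebra $\Lambda_l^{(2)}$ (cyclic quiver, radical square zero) under the free rotation action of $C_l$, so $D\sim_J\Lambda_l^{(2)}$ by Theorem~\ref{thm_group}, and $\Lambda_l^{(2)}$ surjects onto $A_n$, giving $D\leq_J\Lambda_l^{(2)}\leq_J A_n$. Some such indirect construction is needed; working inside $A_n$ alone does not suffice.

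Second, all the strictness assertions — which are the entire content of the theorem — are deferred to an unspecified "invariant," and you are also implicitly treating the two kinds of strictness as one problem when the paper needs two different tools. For $A_m\not\geq_J A_n$ when $m>n$, the paper (Proposition~\ref{prop_An}) uses Lemma~\ref{lemma_top}: if $M\otimes_{A_n}N\simeq A_m\oplus X$ then the projective cover of ${}_{A_m}M_{A_n}$ must be a projective generator on the left, and this is ruled out using the explicit string-module classification of indecomposable $A_m$-$A_n$-bimodules (the algebra $A_m\otimes_\Bbbk A_n^{\mathrm{op}}$ is special biserial of finite type, and the tops of the indecomposables are too small). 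Note this uses bimodules over the \emph{pair} $(A_m,A_n)$, not the $A_n$-$A_n$-bimodule cell structure of \cite{Jo2} that you cite. For separating the $A_n$ from $\Bbbk[x]/(x^2)$, the paper uses a completely different mechanism (Lemma~\ref{lemma_D}): $J$-equivalence is compatible with $-\otimes_\Bbbk C$ (Proposition~\ref{prop_otimesC}) and preserves representation type (Theorem~\ref{thm_reptype}), so one compares the representation types of $T_3(-)$ and of the enveloping algebras — $T_3(B)$ has finite type only for radical-square-zero Nakayama algebras $B$, and $D\otimes_\Bbbk D$ is tame while $A_m\otimes_\Bbbk A_m^{\mathrm{op}}$ has finite type. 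Without producing a concrete obstruction of one of these kinds, the proposal does not prove any of the strict inequalities.
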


This paper is organized as follows: in Section~2, we provide relevant background, state the definition of $J$-equivalence, and recall some related equivalences of algebras - in particular separable equivalence in the sense of \cite{Ka1}, \cite{Li}.
In Section~3, we consider more carefully the bimodules inducing a $J$-relation.
In Section~4 we prove that taking factor algebras induces $J$-relation, and consider split and separable algebra extensions. Further we consider two constructions involving group actions on an algebra: the invariant subalgebra and the skew group algebra.
In Section~5 we generalize the work of S.~Peacock on separable equivalence to $J$-equivalence, and prove that it preserves representation type and is stable under forming tensor algebras.
Section~6 is devoted to examples of $J$-equivalent and -comparable algebras.
In Section~7, we study left-right projective bimodules over certain classes of algebras. As a byproduct, we show that many of our examples of $J$-comparable algebras are not related in the sense of separable equivalence.
Finally, in Section~8 we look towards possible future directions, and conjecture that $J$-equivalent algebras have the same Loewy length.

\section{Preliminaries}

\subsection{Notation and setup}
Throughout, $\Bbbk$ is a field. In some sections we add requirements 
about characteristic of $\Bbbk$ and/or it being algebraically closed.
Algebra means finite-dimensional, associative, unital $\Bbbk$-algebra, 
and all (bi)modules are finite-dimensional unless otherwise stated.

Let $A$ and $B$ be $\Bbbk$-algebras. We denote by $A$-mod, mod-$A$ and 
$A$-mod-$B$ the categories of finite-dimensional left $A$-modules, 
right $A$-modules, and $A$-$B$-bimodules, respectively. For an 
$A$-$B$-bimodule $M$ we sometimes write ${}_AM_B$, ${}_AM$ and $M_B$ to 
emphasize which module structure we consider. The notions of $A$-$B$-bimodules 
and left $A\otimes_\Bbbk B^\mathrm{op}$-modules will be used interchangeably.

We compose maps from right to left.

\subsection{$\Bbbk$-split and (left-right) projective bimodules}

An $A$-$B$-bimodule ${}_AM_B$ is called {\em left projective} if the 
left $A$-module ${}_AM$ is projective. Similarly, $M$  is called 
{\em right projective} if the right $B$-module $M_B$ is projective. 
If both ${}_AM$ and $M_B$ are projective, we say that $M$ is 
{\em left-right projective}. We reserve the term {\em projective bimodule} 
for  the case when $M$ is projective as a bimodule. If $M$ is projective as 
a bimodule, then $M$ is left-right projective, but the converse is not 
true in general.  For example, the regular $A$-$A$-bimodule ${}_AA_A$ is 
left-right projective for any $A$, but projective if and only if 
$A$ is separable (and thus semisimple).

Furthermore, if ${}_AM_B$ and ${}_BN_C$ are left-right projective, then ${}_AM\otimes_BN_C$ is again left-right projective; see e.g. \cite[Remark~2.2]{ZZ} for an explicit argument. 
It therefore follows that the full subcategory $A\text{-lrproj-}A$ of $A\text{-mod-}A$, consisting of left-right projective bimodules, is a monoidal subcategory of the monoidal category $A\text{-mod-}A$.

An indecomposable $A$-$B$-bimodule $M$ is called {\em $\Bbbk$-split} if 
$M\simeq M_1\otimes_\Bbbk M_2$, for some indecomposable 
$M_1\in A\text{-mod}$ and $M_2\in \text{mod-}B$.
An $A$-$B$-bimodule is called {\em $\Bbbk$-split} if it is isomorphic
to a direct sum of $\Bbbk$-split indecomposable bimodules.

Note that any projective $A$-$B$-bimodule is $\Bbbk$-split. 
Indeed, if $P_1,\ldots ,P_n$ is are the indecomposable projective 
left $A$-modules, and $Q_1,\ldots ,Q_m$  are the indecomposable projective 
right $B$-modules (up to isomorphism), then
\begin{displaymath}
P_i\otimes_\Bbbk Q_j,\, i=1,\ldots ,n,\, j=1,\ldots ,m 
\end{displaymath}
are all indecomposable projective $A$-$B$-bimodules, up to isomorphism.

For any algebras $A$, $B$ and $C$,  whenever one of the bimodules ${}_AM_B$ or ${}_BN_C$ 
is $\Bbbk$-split,  then so is their tensor product ${}_AM\otimes_BN_C$, see \cite{MMZ}.
In particular, $\Bbbk$-split bimodules generate the minimal two-sided ideal in $\mathscr{BM}_\Bbbk$
with respect to inclusions.

\subsection{Two-sided preorder and equivalence}

Let $\mathscr{B}$ be a $\Bbbk$-linear, idempotent split and Krull-Schmidt bicategory
in which the horizontal composition $\circ$ is $\Bbbk$-bi\-li\-near.
As in \cite{MM1}, define the \textit{two-sided preorder} $\geq_J$ on the set of
isomorphism classes of indecomposable 1-morphisms of $\mathscr{B}$ by 
saying that $F\geq_J G$ if there are 1-morphisms $H_1,H_2$ such that $F$ is
isomorphic to a direct summand of $H_2\circ G\circ H_1$.
The induced equivalence relation is called \emph{two-sided equivalence} or \emph{$J$-equivalence},
and the equivalence classes are called \emph{two-sided cells}.

\begin{definition}
Let $A$ and $B$ be connected $\Bbbk$-algebras.
We say that $A\geq_J B$ if 
${}_AA_A\geq_J {}_BB_B$ in the bicategory $\mathscr{BM}_\Bbbk$.
Explicitly, $A\geq_J B$ if there are bimodules ${}_AM_B$ and ${}_BN_A$ such that
\begin{displaymath}
M\otimes_B N\simeq A\oplus X,
\end{displaymath}
for some $A$-$A$-bimodule $X$.
If $A\geq_J B$ and $B\geq_J A$, then we say that $A$ and $B$ 
are \emph{$J$-equivalent} or \emph{two-sided equivalent} and write $A\sim_JB$.
\end{definition}

The connectedness condition could be removed if we chose to define $A\geq_J B$ using the existence of ${}_AM_B$ and ${}_BN_A$ such that $A$ is a direct summand of $M\otimes_B N$ as $A$-$A$-bimodules. 
However, due to the additive nature of the definition, this provides no additional information.

If $A\sim_J B$, where $A$ is a direct summand of $M_1\otimes_B N_1$, and $B$ a direct summand of $N_2\otimes_A M_2$. Then we can set $M=M_1\oplus M_2$ and $N=N_1\oplus N_2$ to obtain that $A$ and $B$ are direct summands of $M\otimes_B N$ and $N\otimes_A M$, respectively. In this situation, we say that the two-sided equivalence is induced by $M$ and $N$.

We note that among all $\Bbbk$-algebras, the field $\Bbbk$ itself is maximal with respect to the two-sided order.

Moreover, when we restrict the two-sided preorder on $\mathscr{BM}$ to the monoidal category 
$$\mathscr{BM}_\Bbbk(A\text{-mod},A\text{-mod})=A\text{-mod-}A,$$ for some $A$, we have that the 
regular $A$-$A$-bimodule is minimal with respect to $\leq_J$.  Moreover, indecomposable 
$\Bbbk$-split $A$-$A$-bimodules form the maximal two-sided cell.

\subsection{Equivalences of Morita type}
Recall that two algebras $A$ and $B$ are {\em Morita equivalent} 
if the categories $A$-mod and $B$-mod are equivalent. An equivalent 
condition is the existence of bimodules ${}_AP_B$ and ${}_BQ_A$ such that we have the following isomorphisms of bimodules.
\begin{displaymath}
P\otimes_B Q \simeq A\quad\text{  and }\quad Q\otimes_A P\simeq B.
\end{displaymath}
With this formulation, we see that Morita equivalence is a special case of two-sided equivalence.

Given an abelian category $\mathcal{A}$, the {\em stable category $\underline{\mathcal{A}}$} 
is defined as the quotient of $\mathcal{A}$ by the ideal generated by projective objects.
This means that it has the same objects as $\mathcal{A}$ 
and $\underline{\mathcal{A}}(X,Y):=\mathcal{A}(X,Y)/\sim$, where $f\sim g$ if $f-g$ factors through a projective object of $\mathcal{A}$.

Also, recall that the {\em singularity category $D_\mathrm{sg}(A)$} is defined as the Verdier quotient
\begin{displaymath}
D_\mathrm{sg}(A) := D^b(A\text{-mod})/K^b(A\text{-proj}),
\end{displaymath}
where $D^b(A\text{-mod})$ is the bounded derived category $A$-mod, and $K^b(A\text{-proj})$ is the bounded homotopy category of finite-dimensional projective $A$-modules.

The algebras $A$ and $B$ are called {\em stably equivalent} 
if the stable categories $A$-$\underline{\text{mod}}$ and $B$-$\underline{\text{mod}}$ are equivalent. Following \cite{Br}, we say that a stable equivalence is of {\em Morita type} if it is induced by tensoring with  left-rigth projective bimodules ${}_AM_B$ and ${}_BN_A$ such that
\begin{equation}\label{eqnn1}
M\otimes_B N\simeq A\oplus P\quad \text{ and }\quad N\otimes_A M\simeq B\oplus Q,
\end{equation}
where ${}_AP_A$ and ${}_BQ_B$ are projective bimodules.

Similarly, $A$ and $B$ are {\em singularly equivalent} if 
their singularity categories $D_{sg}(A)$ and $D_{sg}(B)$ are 
equivalent, and singularly equivalent of  {\em Morita type} if 
the equivalence is induced by   left-rigth projective bimodules 
${}_AM_B$ and ${}_BN_A$ satisfying  \eqref{eqnn1},
where ${}_AP_A$ and ${}_BQ_B$ have finite projective 
dimensions as bimodules, see \cite{CS} and \cite{ZZ}.

Finally, if we remove all conditions on $P$ and $Q$ above and obtain a two-sided equivalence induced by left-right projective bimodule $M$ and $N$, then $A$ and $B$ are called {\em separably equivalent}. 
This notion was introduced in \cite{Ka1} and \cite{Li}.
We adopt the terminology from \cite{Pe} and say that 
{\em $A$ separably divides $B$} if there are left-right projective bimodules ${}_AM_B$ and ${}_BN_A$ such that the regular $A$-$A$-bimodule is a direct summand of $M\otimes_B N$.
It follows from the definitions that if $A$ separably divides $B$, then $B\leq_J A$.

As in \cite{Pe}, we say that two separably equivalent algebras $A$ and $B$ are 
{\em symmetrically separably equivalent} if the functors
\begin{displaymath}
M\otimes_B- :B\text{-mod}\to A\text{-mod} \quad
\text{ and }\quad N\otimes_A -:A\text{-mod}\to B\text{-mod} 
\end{displaymath}
are biadjoint. 
Symmetric separable equivalence is further characterized in \cite{Ka2}.
We remark that if $A$ and $B$ are symmetric, then they are separably equivalent if and only if they are symmetrically separably equivalent.

Summarizing, assume that there are left-right projective bimodules ${}_AM_B$ and ${}_BN_A$ such that $M\otimes_BN\simeq A\oplus X$ and $N\otimes_A M\simeq B\oplus Y$ as bimodules. We have the following summary table.

\begin{center}
\begin{tabular}{l|l}
$X$, $Y$ & Equivalence \\
\hline
Zero & Morita equivalence\\
\hline
Projective & Stable equivalence of Morita type\\
\hline
Finite projective dimension & Singular equivalence of Morita type\\
\hline
No condition & Separable equivalence
\end{tabular}
\end{center}

The following chain of implications holds.
\begin{displaymath}
\xymatrix@R=10pt{
\text{Morita equivalence} \ar@{=>}[d]\\
\text{Stable equivalence of Morita type} \ar@{=>}[d]\\
\text{Singular equivalence of Morita type} \ar@{=>}[d]\\
\text{Separable equivalence} \ar@{=>}[d]\\
J\text{-equivalence}
}
\end{displaymath}
The notions of stable and separable equivalence of Morita type were introduced in the study of group algebras.
As far as we are aware, the literature does not provide many explicit examples of equivalent algebras.

Seaprable division turns out to be more useful in us. Section~\ref{s_reptype} of this paper is devoted to generalizing results of \cite{Pe} from separable to two-sided equivalence. Moreover, we mention some further facts.
\begin{itemize}
\item If $A$ and $B$ are self-injective algebras which are derived equivalent, then they are stably equivalent of Morita type, see \cite{Ri}.
\item If $G$ is a finite group, $\Bbbk$ a field of positive characteristic $p$, and $P$ a Sylow $p$-subgroup of $G$, then $\Bbbk G$ and $\Bbbk P$ are separably equivalent, see \cite{Li}.
\item If $A$ and $B$ are Nakayama algebras which are quotients of hereditary algebras, then $A$ and $B$ are stably equivalent of Morita type if and only if they are Morita equivalent, see \cite{Po1}.
\end{itemize}

\section{Structure of bimodules inducing equivalence}

\subsection{Generators}
In this subsection we fix algebras $A$ and $B$, and bimodules ${}_AM_B$, ${}_BN_A$, such that
\begin{align*}
{}_AM\otimes_BN_A\simeq A\oplus X,
\end{align*}
for some $A$-$A$-bimodule $X$.

The following result generalizes  the fact that the left-right projective bimodules inducing a separable division are one-sided projective generators, see e.g. \cite[Lemma~2.2]{Ka2}.

\begin{lemma}\label{lemma_top}
If $P(M)$ and $P(N)$ are projective covers of $M$ and $N$ in 
$A\text{\rm-mod-}B$ and $B\text{\rm-mod-}A$, respectively, 
then ${}_AP(M)$ and $P(N)_A$ are projective generators 
in $A\text{\rm-mod}$ and $\text{\rm mod-}A$, respectively.
\end{lemma}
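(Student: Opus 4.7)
The plan is to exploit the decomposition $M\otimes_B N\simeq A\oplus X$ by tensoring the projective cover maps with $N$ on the right (resp.\ $M$ on the left) and then using that projective bimodules split as tensor products of one-sided projectives to extract the required generation statement on one side.

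First I would handle ${}_AP(M)$. The epimorphism $P(M)\twoheadrightarrow M$ of $A$-$B$-bimodules, tensored over $B$ with $N$, yields an epimorphism
\begin{displaymath}
P(M)\otimes_B N\twoheadrightarrow M\otimes_B N\simeq A\oplus X
\end{displaymath}
of $A$-$A$-bimodules, which after composing with the projection onto $A$ gives an epimorphism of left $A$-modules from $P(M)\otimes_B N$ onto ${}_AA$. Since ${}_AA$ is projective, this splits, so ${}_AA$ is a direct summand of $P(M)\otimes_B N$ as a left $A$-module.

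Next I would analyze the left $A$-module structure of $P(M)\otimes_B N$. Using the fact recalled in Section~2 that every projective $A$-$B$-bimodule is a direct sum of bimodules of the form $P_i\otimes_\Bbbk Q_j$, with $P_i$ indecomposable projective in $A\text{-mod}$ and $Q_j$ indecomposable projective in $\text{mod-}B$, we can write
\begin{displaymath}
P(M)\simeq \bigoplus_{i,j}(P_i\otimes_\Bbbk Q_j)^{n_{ij}}.
\end{displaymath}
Then, as a left $A$-module,
\begin{displaymath}
P(M)\otimes_B N\;\simeq\;\bigoplus_{i,j} P_i^{\,n_{ij}\cdot \dim_\Bbbk(Q_j\otimes_B N)},
\end{displaymath}
so the indecomposable summands of ${}_A(P(M)\otimes_B N)$ are exactly those $P_i$ which already appear in ${}_AP(M)$. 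Since ${}_AA$ is a summand and contains every indecomposable projective left $A$-module, each such $P_i$ must occur in ${}_AP(M)$, making it a projective generator of $A\text{-mod}$.

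The argument for $P(N)_A$ is symmetric: tensor the epimorphism $P(N)\twoheadrightarrow N$ on the left with $M$, project onto $A$, split off $A_A$, and apply the analogous decomposition of $P(N)$ as a $B$-$A$-bimodule to conclude that every indecomposable projective right $A$-module appears as a summand of $P(N)_A$. I expect the main nuisance to be purely notational rather than conceptual: keeping the left/right roles straight when switching between the two halves of the statement, and making sure that the decomposition of $P(M)\otimes_B N$ as a left $A$-module genuinely picks up \emph{only} those $P_i$ that occur in ${}_AP(M)$ (so that the existence of $A$ as a summand forces all $P_i$ to appear). There is no real obstacle beyond this bookkeeping, since the hypothesis on $M\otimes_B N$ together with projectivity of the regular modules does all the work.
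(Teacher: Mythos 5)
Your proof is correct, and it takes a genuinely different route from the paper's. The paper tensors the two projective covers together, forming $P(M)\otimes_B P(N)\twoheadrightarrow M\otimes_B N$, and then invokes Happel's description of the minimal projective cover of the regular bimodule ${}_AA_A$ as $\bigoplus_i P_i\otimes_\Bbbk P_i'$, together with the fact that an epimorphism from a projective bimodule factors through the minimal projective cover via a split epimorphism; the conclusion is then read off from the shape of the summands $P_i\otimes_\Bbbk Q_j'\otimes_B Q_k\otimes_\Bbbk P_l'$. You instead tensor only one projective cover at a time with the actual bimodule ($P(M)\otimes_B N$, resp.\ $M\otimes_B P(N)$), split off the regular one-sided module ${}_AA$ (resp.\ $A_A$) directly using its projectivity as a one-sided module, and finish with Krull--Schmidt on left $A$-modules. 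This avoids both Happel's result and any discussion of projective covers in the bimodule category, so your argument is more elementary and self-contained; the paper's version buys the slightly stronger intermediate fact that $\bigoplus_i P_i\otimes_\Bbbk P_i'$ is a direct summand of $P(M)\otimes_B P(N)$ as a bimodule. Two small points of care in your write-up: the word ``exactly'' in your description of the summands of ${}_A\bigl(P(M)\otimes_B N\bigr)$ is an overstatement, since $Q_j\otimes_B N$ could vanish and kill a $P_i$ that does occur in ${}_AP(M)$ --- but only the inclusion you actually use (every summand of the tensor product already occurs in ${}_AP(M)$) is needed; and you should note that $\dim_\Bbbk Q_j>0$ guarantees that $n_{ij}>0$ really does force $P_i$ to be a summand of ${}_AP(M)$.
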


\begin{proof}
Let $P_1,\ldots ,P_n$ and $P_1',\ldots ,P_n'$
be complete sets of pairwise non-isomorphic projective left
and right $A$-modules, respectively.
By \cite{Ha}, the bimodule 
${}_AA_A$ has minimal projective cover $\bigoplus_{i=1}^nP_i\otimes P_i'$.
As ${}_AA_A$ is a direct summand of $M\otimes_B N$, the projective cover of ${}_AA_A$ 
is a direct summand of the projective cover $P(M\otimes_BN)$ of $M\otimes_B N$ in $A\text{-mod-}A$.

Consider now some minimal projective covers  $P(M)\xrightarrow{p}M$  and $P(N)\xrightarrow{\pi}N$.
We have that
\begin{displaymath}
\xymatrix{
P(M)\otimes_B P(N) \ar[rr]^{p\otimes \pi}&& M\otimes_B N
}
\end{displaymath}
is an epimorphism from a projective module to $M\otimes_B N$. Hence it factors through 
any minimal projective cover of $M\otimes_B N$ via a split epimorphism. This means that
\begin{align*}
\bigoplus_{i=1}^nP_i\otimes P_i'
\end{align*}
is a direct summand of $P(M)\otimes_B P(N)$.

Suppose that $Q_1,\ldots ,Q_m$ and $Q_1',\ldots ,Q_m'$ are all pairwise 
non-isomorphic indecomposable projective left and right $B$-modules, respectively. 
Then any indecomposable direct summand of $P(M)$ is of the form
$P_i\otimes_\Bbbk Q_j'$, for some $i,j$, and any indecomposable direct 
summand of $P(N)$ is of  the form $Q_k\otimes_\Bbbk P_l'$, for some $k,l$.
The tensor product over $B$ will result in summands of the form
\begin{displaymath}
P_i\otimes_\Bbbk Q_j'\otimes_B Q_k\otimes_\Bbbk P_l' \simeq 
(P_i\otimes_\Bbbk  P_l')^{\oplus \dim (Q_j'\otimes_B Q_k)}.
\end{displaymath}
Therefore, to have a summand of the form $\bigoplus_{i=1}^nP_i\otimes P_i'$, 
each $P_i$ must occur as a summand in ${}_AP(M)$, and each $P_i'$ must occur
as a summand in $P(N)_A$. This proves our lemma.
\end{proof}

Lemma~\ref{lemma_top} implies, for example, that all simple 
left $A$-modules are summands of ${}_A\mathrm{top}({}_AM_B)$. 
% We remark also that it is stronger than the claim that the projective cover  of ${}_AM$ in $A$-mod is a projective generator.
Moreover, the following lemma implies that, 
if $A$ is self-injective, then ${}_AM$ and $N_A$ themselves are generators.

\begin{lemma}
\begin{enumerate}[(i)]
\item The left module ${}_AM$ is faithful, and all indecomposable projective-injective left $A$-modules are direct summands of ${}_AM$.
\item The right module $N_A$ is faithful, and all indecomposable projective-injective right $A$-modules are direct summands of $N_A$.
\end{enumerate}
\end{lemma}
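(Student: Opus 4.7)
The plan is to first prove faithfulness, which is immediate from the direct summand assumption, and then deduce the projective-injective part by combining faithfulness with a standard embedding trick and Krull--Schmidt.

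For the faithfulness in (i), suppose $a \in A$ satisfies $aM = 0$. Then on every pure tensor $a(m \otimes n) = (am) \otimes n = 0$, so $a$ annihilates ${}_A(M \otimes_B N)$ from the left. Transporting across the bimodule isomorphism $M \otimes_B N \simeq A \oplus X$, this forces $a$ to annihilate ${}_AA$; evaluating at $1_A$ gives $a = 0$. The faithfulness in (ii) is obtained by exactly the same argument on the right: if $Nb = 0$, then $(m \otimes n)b = m \otimes (nb) = 0$, so $(M \otimes_B N)b = 0$, forcing $Ab = 0$ and thus $b = 0$.

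For the projective-injective part of (i), I would fix a $\Bbbk$-basis $m_1, \ldots, m_r$ of the finite-dimensional module ${}_AM$ and consider the left $A$-linear map
\[
\varphi\colon A \longrightarrow M^{\oplus r}, \qquad a \longmapsto (am_1, \ldots, am_r).
\]
Its kernel is $\{a \in A : am_i = 0 \text{ for all } i\}$, which coincides with $\{a : aM = 0\}$ precisely because the $m_i$ span $M$ over $\Bbbk$ (so every $m \in M$ is a $\Bbbk$-linear combination of the $m_i$, and $a$ commutes with scalars). By faithfulness this kernel is trivial, hence ${}_AA$ embeds into ${}_AM^{\oplus r}$. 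Now let $Q$ be any indecomposable projective-injective left $A$-module. As a direct summand of ${}_AA$, the module $Q$ embeds into $M^{\oplus r}$, and since $Q$ is injective this embedding splits, so $Q$ is a direct summand of $M^{\oplus r}$. Krull--Schmidt for the category of finite-dimensional left $A$-modules then forces $Q$ to be a direct summand of ${}_AM$ itself. Part (ii) follows by the symmetric argument, using the embedding $A_A \hookrightarrow N_A^{\oplus s}$ coming from a $\Bbbk$-basis of $N$.

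I do not foresee a serious obstacle here: both ingredients --- faithfulness via the direct summand assumption, and the classical fact that an embedding of an injective module always splits --- are elementary. The only point worth attention is the use of a $\Bbbk$-basis rather than an $A$-generating set when building $\varphi$; this is what makes the kernel computation match the annihilator exactly and keeps the argument free from any Artinian descending-chain reasoning.
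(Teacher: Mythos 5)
Your proof is correct and follows essentially the same route as the paper: faithfulness is extracted from the direct summand $A$ of $M\otimes_B N$, and the embedding $A\hookrightarrow M^{\oplus \dim M}$ (which the paper phrases via the representation map $A\to\mathrm{End}_\Bbbk(M)$, identical to your basis construction) combined with injectivity of $Q$ and Krull--Schmidt gives the conclusion. No gaps.
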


\begin{proof}
We prove $(i)$. First of all, ${}_AM$ is faithful. Indeed, if $a\in \mathrm{Ann}({}_AM)$, then
\begin{displaymath}
{}_A0_A={}_A0_B\otimes_B N = aM\otimes_B N = aA\oplus aX,
\end{displaymath}
so that $a\in \mathrm{Ann}({}_AA)=\{0\}$.

The representation map $A\mapsto \mathrm{End}_\Bbbk(M)$ is a homomorphism of left $A$-modules.
Hence the fact that $M$ is faithful implies that the left regular module 
${}_AA$ embeds into $\mathrm{End}_\Bbbk({}_AM)\cong {}_AM^{\oplus \dim(M)}$.
Hence, if $P$ is a projective left $A$-module, then, for some $n$, there is a monomorphism
\begin{displaymath}
P\hookrightarrow {}_AM^{\oplus n}.
\end{displaymath} 
If $P$ is also injective, then  this monomorphism splits, so that $P$ is a direct summand of ${}_AM^{\oplus n}$. If, additionally, $P$ is indecomposable, then $P$ must be a summand of ${}_AM$.
\end{proof}

\subsection{Adjoint pairs}
Assume that $B\leq_J A$, with bimodules ${}_AM_B$ and ${}_BN_A$ such that ${}_AA_A$ is a direct summand of $M\otimes_B N$.
A natural question is when the functors  $M\otimes_B- :B\text{-mod}\to A\text{-mod}$ and $N\otimes_A- :A\text{-mod}\to B\text{-mod}$ form an adjoint pair $(M,N)$.
From \cite[Lemma~2.5]{Ka2} it follows that the counit of such adjunction is a split epimorphism.
We also have the following general result.

\begin{lemma}\label{lemma_adj}
For $M\in A\text{-mod-}B$ and $N\in B\text{-mod-}A$, the functors $(M\otimes_B-,N\otimes_A-)$ is an adjoint pair of functors if and only if $M$ is projective as a left $A$-module and $\mathrm{Hom}_{A\text{-}} (M,A)\simeq N$ as $B$-$A$-bimodule.
\end{lemma}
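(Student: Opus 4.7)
The plan is to observe that for any $M\in A\text{-mod-}B$ the tensor functor $M\otimes_B -$ always has a canonical right adjoint, namely $\mathrm{Hom}_A(M,-)\colon A\text{-mod}\to B\text{-mod}$, via the standard tensor-hom adjunction
\begin{displaymath}
\mathrm{Hom}_A(M\otimes_B X,Y)\;\cong\;\mathrm{Hom}_B\bigl(X,\mathrm{Hom}_A(M,Y)\bigr),
\end{displaymath}
where $\mathrm{Hom}_A(M,Y)$ inherits its left $B$-action from the right $B$-action on $M$. The statement then reduces to determining when the functor $N\otimes_A -$ is naturally isomorphic to $\mathrm{Hom}_A(M,-)$, and extracting what this forces on $N$ and on $M$.

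For the ``if'' direction, I would assume ${}_AM$ is projective (hence finitely generated projective, since bimodules here are finite-dimensional) and that $\mathrm{Hom}_A(M,A)\simeq N$ as $B$-$A$-bimodules. Consider the standard natural transformation
\begin{displaymath}
\eta_Y\colon N\otimes_A Y\longrightarrow \mathrm{Hom}_A(M,Y),\qquad \phi\otimes y\mapsto \bigl(m\mapsto \phi(m)y\bigr),
\end{displaymath}
where $N$ is identified with $\mathrm{Hom}_A(M,A)$. Then $\eta_A$ is an isomorphism by hypothesis, both sides are additive functors, and both commute with finite direct sums, so $\eta_Y$ is an iso for every finitely generated projective $Y$. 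Since $N\otimes_A-$ is right exact and $\mathrm{Hom}_A(M,-)$ is exact by projectivity of ${}_AM$, a comparison via a projective presentation of an arbitrary $Y$ (and the five-lemma) shows $\eta$ is a natural isomorphism. Composing with the canonical adjunction above yields the required adjunction $(M\otimes_B -,N\otimes_A -)$.

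For the ``only if'' direction, assume that $(M\otimes_B-,N\otimes_A-)$ is an adjoint pair. By uniqueness of right adjoints, $N\otimes_A -\cong \mathrm{Hom}_A(M,-)$ as functors $A\text{-mod}\to B\text{-mod}$. The functor $N\otimes_A -$ is always right exact, and left exact as a right adjoint, hence exact; therefore $\mathrm{Hom}_A(M,-)$ is exact, which means ${}_AM$ is projective. Evaluating the natural isomorphism at the $A$-$A$-bimodule $A$ gives $N\cong N\otimes_A A\cong \mathrm{Hom}_A(M,A)$ as left $B$-modules. To upgrade this to a $B$-$A$-bimodule isomorphism, apply naturality to the left $A$-module maps given by right multiplication by elements of $A$ on ${}_AA$: these translate to the right $A$-action on $N$ on the one side and on $\mathrm{Hom}_A(M,A)$ on the other.

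The main subtlety is the bookkeeping of the $B$-$A$-bimodule structures on the two sides of $N\cong\mathrm{Hom}_A(M,A)$, and carefully justifying that the ``pointwise'' natural iso of functors refines to an iso of bimodules; beyond this, both directions are routine applications of tensor-hom adjunction and uniqueness of adjoints.
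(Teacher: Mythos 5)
Your argument is correct and complete: it is the standard proof via the tensor--hom adjunction (so that $\mathrm{Hom}_A(M,-)$ is \emph{the} right adjoint of $M\otimes_B-$), uniqueness of right adjoints, exactness of $N\otimes_A-$ forcing projectivity of ${}_AM$, and evaluation at ${}_AA_A$ together with naturality to recover the $B$-$A$-bimodule structure. The paper itself does not spell this out but simply refers to the proof of \cite[Lemma~13]{MZ}, which proceeds along the same lines, so your write-up is essentially the intended argument with the details filled in.
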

\begin{proof}
Mutatis mutandis the proof of \cite[Lemma~13]{MZ}.
\end{proof}

In the situation of the above lemma, the functor $N\otimes_A -\simeq \mathrm{Hom}_{A-}(M,-)$ is exact. Consequently, $N$ is projective as right $A$-module. 
In light of Lemma~\ref{lemma_top}, the following is clear.

\begin{corollary}
If ${}_AM_B$, ${}_BN_A$ are bimodules such that ${}_AA_A$ is a direct summand of $M\otimes_BN$
and $(M\otimes_B -,N\otimes_A-)$ is an adjoint pair of functors, then ${}_AM$ and $N_A$ are generators.
\end{corollary}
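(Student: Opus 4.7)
The plan is to derive both projectivity and the generator property from Lemma~\ref{lemma_adj} combined with the direct-summand hypothesis. Lemma~\ref{lemma_adj} immediately yields that ${}_AM$ is projective as a left $A$-module, and the observation following its proof gives that $N_A$ is projective as a right $A$-module. Hence the remaining content of the corollary is that ${}_AM$ and $N_A$ each contain every indecomposable projective as a direct summand; for a projective module, this is equivalent to being a generator.

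For ${}_AM$, I would fix an indecomposable projective left $A$-module ${}_AP_i$ and tensor the bimodule isomorphism $M\otimes_B N\simeq A\oplus X$ on the right over $A$ by $P_i$. This exhibits ${}_AP_i$ as a direct summand of the left $A$-module $M\otimes_B(N\otimes_A P_i)$. Setting $V:=N\otimes_A P_i\in B\text{-mod}$ and choosing a surjection $B^n\twoheadrightarrow V$ of left $B$-modules, the functor $M\otimes_B-$ produces an epimorphism ${}_AM^n\twoheadrightarrow M\otimes_B V$ of left $A$-modules. Composing with the split projection onto the summand ${}_AP_i$ gives an epimorphism ${}_AM^n\twoheadrightarrow {}_AP_i$; projectivity of $P_i$ then forces $P_i$ to occur as a direct summand of ${}_AM^n$, and Krull--Schmidt reduces this to $P_i$ being a summand of ${}_AM$ itself.

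The argument for $N_A$ is entirely symmetric: tensor $M\otimes_B N\simeq A\oplus X$ on the left over $A$ by an indecomposable projective right $A$-module $P_i'$, use that $P_i'\otimes_A M$ is a quotient of some $B^n$ in $\text{mod-}B$, and descend from $P_i'$ being a summand of $N_A^n$ to a summand of $N_A$. Since Lemma~\ref{lemma_adj} supplies the projectivity of both sides for free, there is no real obstacle in the proof; the one mildly delicate point is that the bimodule decomposition $M\otimes_B N\simeq A\oplus X$ must be read at each step as decomposing in the appropriate one-sided module category, but this is immediate from the fact that restriction of structure preserves direct sums.
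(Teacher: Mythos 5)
Your proof is correct, but it takes a genuinely different route from the paper's. The paper deduces the corollary from Lemma~\ref{lemma_top}: that lemma shows every simple left $A$-module occurs in ${}_A\mathrm{top}({}_AM_B)$, and since Lemma~\ref{lemma_adj} forces ${}_AM$ to be projective, a projective module whose top contains every simple is a projective generator (symmetrically for $N_A$). You instead tensor the splitting $M\otimes_B N\simeq A\oplus X$ with an indecomposable projective, present the resulting $B$-module by a free module, and use right-exactness of $M\otimes_B-$ together with the fact that an epimorphism onto a projective module splits. This is more elementary (it bypasses Lemma~\ref{lemma_top} and the Happel projective-cover input behind it) and, notably, your argument for the generator property never actually uses the adjunction hypothesis: taking $P_i={}_AA$ directly, ${}_AA$ is a summand of $M\otimes_B N$, which is a quotient of ${}_AM^n$ for any presentation $B^n\twoheadrightarrow {}_BN$, so the epimorphism ${}_AM^n\twoheadrightarrow {}_AA$ splits and ${}_AM$ is a generator whenever ${}_AA_A$ is a direct summand of $M\otimes_BN$. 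Your argument therefore proves a slightly stronger statement; the adjunction (via Lemma~\ref{lemma_adj}) is only needed to upgrade ``generator'' to ``projective generator'', which is what the paper's route through Lemma~\ref{lemma_top} delivers.
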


If the functors $M\otimes_B-$ and $N\otimes_A -$ are biadjoint, the argument above also yields the following result.

\begin{corollary}\label{cor_adj}
If ${}_AM_B$ and ${}_BN_A$ induce a two-sided equivalence between $A$ and $B$, and the functors $M\otimes_B- :B\text{-mod}\to A\text{-mod}$ and $N\otimes_A- :A\text{-mod}\to B\text{-mod}$ are biadjoint, then $A$ and $B$ are symmetrically separably equivalent.
\end{corollary}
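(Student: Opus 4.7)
The plan is to verify the two requirements of symmetric separable equivalence: (1) that the bimodules $M$ and $N$ are left-right projective (so that they induce a separable equivalence), and (2) that the associated functors are biadjoint. The second is given by hypothesis, so the real content is extracting left-right projectivity of $M$ and $N$ from the biadjointness assumption.

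First I would apply Lemma~\ref{lemma_adj} to the adjoint pair $(M\otimes_B-,\,N\otimes_A-)$. This immediately gives that ${}_AM$ is projective as a left $A$-module, and that $N\simeq \mathrm{Hom}_{A\text{-}}(M,A)$ as $B$-$A$-bimodules; the latter isomorphism, together with projectivity of ${}_AM$, yields that $N_A$ is projective as a right $A$-module.

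Next I would exploit biadjointness, which states that $(N\otimes_A-,\,M\otimes_B-)$ is also an adjoint pair. Applying Lemma~\ref{lemma_adj} again, but with the roles of $A$ and $B$ (and of $M$ and $N$) interchanged, gives that ${}_BN$ is projective as a left $B$-module and that ${}_BM^{}\simeq \mathrm{Hom}_{B\text{-}}(N,B)$ as $A$-$B$-bimodules, whence $M_B$ is projective as a right $B$-module. Combining the two applications, both $M$ and $N$ are left-right projective bimodules.

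Since $M$ and $N$ already induce a two-sided equivalence by hypothesis, i.e., ${}_AA_A$ is a direct summand of $M\otimes_B N$ and ${}_BB_B$ is a direct summand of $N\otimes_A M$, and they are now shown to be left-right projective, they induce a separable equivalence between $A$ and $B$. Together with the given biadjointness of the corresponding functors, this is precisely the definition of symmetric separable equivalence, as recalled in the preliminaries.

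The one delicate point is simply making sure Lemma~\ref{lemma_adj} is genuinely symmetric in the roles of $(A,M)$ and $(B,N)$; this is a matter of applying the lemma to the opposite adjunction and is routine. No further obstacle is expected.
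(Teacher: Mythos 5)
Your proposal is correct and follows exactly the paper's intended argument: the paper's proof is the single remark that ``the argument above also yields'' the corollary, where the argument above is precisely your two applications of Lemma~\ref{lemma_adj} (one for each direction of the biadjunction) to conclude that $M$ and $N$ are left-right projective, after which separability plus biadjointness gives symmetric separable equivalence by definition. Your extra care in spelling out that $N_A$ is projective because $N\simeq\mathrm{Hom}_{A\text{-}}(M,A)$ with ${}_AM$ projective matches the paper's observation that $N\otimes_A-\simeq\mathrm{Hom}_{A\text{-}}(M,-)$ is exact.
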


If two algebras are stably equivalent of Morita type, then there always exist bimodules forming a  biajoint pair which generate this equivalence, see \cite{DM}. Moreover, bimodules inducing symmetrically separable equivalence also induce biadjoint functors between the stable module categories, and these functors give a (generalized) symmetric separable equivalence between the stable  module categories (see \cite[Section~4]{Pe}, and Section~5.1 in this paper).
In this sense, symmetric separable equivalence is close to stable equivalence of Morita type.
It was shown in \cite[Corollary~2.4]{Liu} that stable equivalence of Morita type preserves the property 
for an algebra of being symmetric.
It is therefore natural to ask whether the same is true for symmetric separable equivalence.
In Section~\ref{s_examples} we shall give a negative answer to this question.
However, the following lemma implies that any algebra which is symmetrically separably equivalent to a symmetric algebra is self-injective.

\begin{lemma}
Let $A$ and $B$ be algebras for which there exist bimodules ${}_AM_B$  and ${}_BN_A$
such that $N\otimes_A M\simeq B\oplus Q,$ for some left-right projective bimodule $Q$.
Assume further that these bimodules give rise to a biadjoint pair of functors between 
the categories $A\text{-mod}$ and $B\text{-mod}$. If $A$ is symmetric, then $B$ is self-injective.
\end{lemma}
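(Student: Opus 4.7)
The plan is to chase injectivity through the biadjoint pair. Since the hypothesis is that the functors $F = M \otimes_B -$ and $G = N \otimes_A -$ are biadjoint, I would first invoke Lemma~\ref{lemma_adj} (applied to both adjunctions) to conclude that ${}_AM$ is projective, $M_B$ is projective, and $N \cong \mathrm{Hom}_{A-}(M,A)$ as $B$-$A$-bimodules (with the analogous identification for $M$). The only piece I really need from this is that $M_B$ is projective, which makes $F = M\otimes_B-$ exact.

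The next step is to exploit the standard principle that \emph{a right adjoint to an exact functor preserves injectives}. Since $G = N\otimes_A-$ is right adjoint to the exact functor $F$, it sends injective left $A$-modules to injective left $B$-modules. Now the symmetry hypothesis on $A$ enters: $A$ symmetric implies $A$ is self-injective, so ${}_AA$ is injective. Therefore $G({}_AA) = N\otimes_A A \cong {}_BN$ is injective as a left $B$-module.

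Finally, I would leverage the projectivity of ${}_AM$ (from step~1) to write ${}_AM$ as a direct summand of ${}_AA^{\oplus k}$ for some $k$. Tensoring over $A$ with ${}_BN_A$ on the left preserves direct summands, so ${}_B(N\otimes_A M)$ is a direct summand of ${}_BN^{\oplus k}$, which is injective by the previous step. Since, by hypothesis, ${}_BB$ is a direct summand of ${}_B(N\otimes_A M)$, it follows that ${}_BB$ is a direct summand of an injective left $B$-module, hence injective. This is exactly the self-injectivity of $B$.

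The argument is quite short once set up correctly; the only delicate point is confirming that the biadjoint hypothesis really does give $M_B$ projective (not just ${}_AM$ projective), so that one genuinely has exactness of $F$ and thus the "right adjoint preserves injectives" principle applies. Apart from this bookkeeping, there is no serious obstacle: the projectivity of ${}_AM$ and the self-injectivity of $A$ combine cleanly via the biadjunction to force injectivity of ${}_BB$.
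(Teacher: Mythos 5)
Your argument is correct, but it takes a genuinely different route from the paper. The paper follows the computation of Liu's Corollary~2.4 at the bimodule level: using $A$ symmetric, ${}_AM$ projective and the biadjunction, it derives $N\otimes_AM\simeq D(N\otimes_AM)$ via a chain of hom--tensor identities, hence $B\oplus Q\simeq D(B)\oplus D(Q)$, and then invokes the left-right projectivity of $Q$ to see that the right-hand side is left-right injective, so $B$ is injective on both sides at once. You instead work at the level of one-sided modules: the reversed adjunction gives (via Lemma~\ref{lemma_adj} with the roles of $A,B$ swapped) that $M_B$ is projective, so $M\otimes_B-$ is exact and its right adjoint $N\otimes_A-$ preserves injectives; since ${}_AA$ is injective, ${}_BN$ is injective, and then ${}_BB\mid {}_B(N\otimes_AM)\mid {}_BN^{\oplus k}$ (using ${}_AM\mid A^{\oplus k}$) forces ${}_BB$ injective. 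Two remarks on what each approach buys. Your proof never uses the hypothesis that $Q$ is left-right projective, so it actually establishes a slightly stronger statement; it is also shorter and more conceptual. On the other hand, it directly yields only left self-injectivity, so you should either note explicitly that for finite-dimensional algebras left and right self-injectivity coincide, or run the mirror argument on right modules; the paper's duality computation produces injectivity on both sides simultaneously and, as a byproduct, the bimodule isomorphism $N\otimes_AM\simeq D(N\otimes_AM)$, which is of independent interest.
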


\begin{proof}
Let $D(-)=\mathrm{Hom}_\Bbbk (-,\Bbbk)$ be the usual $\Bbbk$-duality.
Recall that the assumption that $A$ is symmetric means that there is an isomorphism
${}_A A_A\cong {}_AD(A)_A$ of $A$-$A$-bimodules.
This also implies that the functors $\mathrm{Hom}_\Bbbk (-,\Bbbk)$
and $\mathrm{Hom}_A (-,A)$ between $A$-mod and mod-$A$, in each direction, are isomorphic.
Indeed, using the usual tensor-hom adjunction, we have:
\begin{equation}\label{eqnn2}
\mathrm{Hom}_A (-,A)\cong
\mathrm{Hom}_A (-,\mathrm{Hom}_\Bbbk (A,\Bbbk))\cong
\mathrm{Hom}_\Bbbk (A\otimes_A -,\Bbbk)\cong
\mathrm{Hom}_\Bbbk (-,\Bbbk).
\end{equation}

We shall now follow the computation from the proof of 
\cite[Corollary~2.4]{Liu}, using several of the classical tricks for 
homs and tensor products involving projective modules.
To start with, using \eqref{eqnn2} and the fact that ${}_AM$ is projective, 
we obtain, by \cite[Lemma~3.55]{Ro}, that
\begin{align*}
N \otimes_A M &\simeq \mathrm{Hom}_{-A}\big( \mathrm{Hom}_{-A} (N,A),A\big) \otimes_A M \\
&\simeq  \mathrm{Hom}_{-A} (\mathrm{Hom}_{A-}(M,\mathrm{Hom}_{-A} (N,A)),A) \\
&\simeq  D\big( \mathrm{Hom}_{A-}(M,\mathrm{Hom}_{-A} (N,A))\big) .
\end{align*}
% \begin{align*}
% N \otimes_A M &\simeq D\big( \mathrm{Hom}_{-A} (N,A)\big) \otimes_A M \\
% &\simeq  D\big( \mathrm{Hom}_{A-}(M,\mathrm{Hom}_{-A} (N,A))\big) \\
% &\simeq  D\big( \mathrm{Hom}_{A-}(M,A\otimes_A\mathrm{Hom}_{-A} (N,A))\big) .
% \end{align*}
Substituting $\mathrm{Hom}_{-A} (N,A)\cong A\otimes_A\mathrm{Hom}_{-A} (N,A)$, 
the projectivity of ${}_AM$ allows us to use \cite[Proposition~20.10]{AF} to obtain
\begin{align*}
D\big( \mathrm{Hom}_{A-}(M,A\otimes_A\mathrm{Hom}_{-A} (N,A))\big) &\simeq D\big( \mathrm{Hom}_{A-}(M,A) \otimes_A\mathrm{Hom}_{-A} (N,A)\big) \\
&\simeq D( N\otimes_A M) \\
&\simeq D(B\oplus Q),
\end{align*}
where the isomorphism between the first and second rows follows from the assumption about biadjointness.
This gives 
\begin{align*}
B\oplus Q\simeq N\otimes_A  M \simeq D(B)\oplus D(Q).
\end{align*}
Since $B$ and $Q$ are left-right projective $B$-$B$-bimodules, $D(B)$ and $D(Q)$ are left-right injective. 
Hence $B$ is injective, both as a left and as a right $B$-module.
\end{proof}

\section{Quotients and subalgebras}
In this section we shall consider the $J$-relations of an algebra with its factor algebras,
as well as $J$-relations between an algebra $A$ and subalgebra $B\subseteq A$ induced by the natural bimodule structures on $A$.

\subsection{Quotients}
Let $\varphi :B\to A$ be a homomorphism of unital $\Bbbk$-algebras, i.e. such that $\varphi (1_B)=1_A$.
Then $A$ has the natural structure of 
a $B$-$B$-bimodule by defining the left (resp. right) action of $b\in B$ on $A$ via 
the left (resp. right) multiplication by $\varphi (b)$. We just 
say that $B$ acts on $A$ via $\varphi$. We denote the action of $b\in B$ on $a\in A$ by $\bullet$, 
for example, the left $B$-action on $A$ via $\varphi$ is written
\begin{displaymath}
b\bullet a := \varphi (b)a.
\end{displaymath}

The following result is fairly expected.

\begin{proposition}\label{prop_surj}
If there is a surjective algebra homomorphism $\varphi :B\to A$, then $B\leq_J A$.
\end{proposition}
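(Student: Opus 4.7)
The plan is to exhibit bimodules that realize the $J$-inequality directly. Since $\varphi$ makes $A$ into a $B$-$B$-bimodule, I would consider $A$ also as a left $A$, right $B$-bimodule (with the right $B$-action given by $a\bullet b = a\varphi(b)$) and symmetrically as a $B$-$A$-bimodule. Denote these by $M := {}_AA_B$ and $N := {}_BA_A$; both are well-defined bimodules because $\varphi$ is an algebra homomorphism.

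First I would analyze $M\otimes_B N$. Let $I := \ker(\varphi)$, so that the surjectivity of $\varphi$ gives a $\Bbbk$-algebra isomorphism $A \cong B/I$. Under this identification both $M$ and $N$ are isomorphic to $B/I$ viewed with the appropriate one-sided $B$-action and one-sided $A$-action. A standard computation then yields
\begin{displaymath}
M \otimes_B N \;\cong\; (B/I) \otimes_B (B/I) \;\cong\; B/(I\cdot B + B\cdot I) \;=\; B/I \;\cong\; A,
\end{displaymath}
as $B$-$B$-bimodules, and hence also as $A$-$A$-bimodules (since both $A$-actions factor through $\varphi$, they coincide with the induced $B$-actions via $\varphi$).

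From this isomorphism it follows that the regular bimodule ${}_AA_A$ is itself isomorphic to $M\otimes_B N$, and in particular it is a direct summand (with complementary summand $X=0$). By the definition of the two-sided preorder in $\mathscr{BM}_\Bbbk$, this is precisely the statement $A \geq_J B$, i.e.\ $B \leq_J A$.

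The argument involves no real obstacle beyond bookkeeping of the bimodule structures; the only minor point worth spelling out carefully is that the unitality condition $\varphi(1_B) = 1_A$ is what guarantees $M$ and $N$ are genuine (unital) bimodules and that the natural map $A\otimes_B A \to A$, $a\otimes a' \mapsto aa'$, is an isomorphism rather than merely surjective.
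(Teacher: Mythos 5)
Your proof is correct and follows essentially the same route as the paper: both realize the inequality via the bimodules ${}_AA_B$ and ${}_BA_A$ induced by $\varphi$ together with the multiplication map $A\otimes_B A\to A$. You in fact prove slightly more, namely that $A\otimes_B A\cong A$ outright (so the complement $X$ is zero, using that $\ker\varphi$ is a two-sided ideal), whereas the paper only exhibits an explicit bimodule section $a\mapsto a\otimes 1$ of the multiplication map to get $A$ as a direct summand.
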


\begin{proof}
We shall prove that the $A$-$A$-bimodule $A\otimes_B A$ has a direct summand isomorphic to $A$. Consider first the bimodule morphism $\psi: A\otimes_B A\to A$ defined on simple tensors by $a\otimes a'\mapsto aa'$.
First of all, $\psi$ is surjective, since $A\otimes_B A$ contains the element $a\otimes 1_A$ for all $a\in A$, and $\psi (a\otimes 1)=a$.
Moreover, $\psi$ is right split, with retract $\eta$,
where $\eta$ is the linear map
\begin{displaymath}
\eta: A\to A\otimes_B A,\ a\mapsto a\otimes 1.
\end{displaymath}
We need to show that $\eta$ is an $A$-$A$-bimodule morphism. Once this is verified, it is obvious that $\eta \circ \psi =\mathrm{id}_A$.
That $\eta$ is a morphism of left $A$-modules is clear. Since $\varphi$ is surjective, $\eta$ is also a right $A$-module morphism. Indeed, given $a,a'\in A$, fix some $b\in B$ such that $\varphi (b)=a'$. Then
\begin{align*}
\eta (aa') &= (aa')\otimes_B 1_A = \big( a\varphi (b)\big) \otimes_B 1_A = \\
&= a\bullet b\otimes _B 1_A =\\
&= a\otimes_B b\bullet 1_A = \\
&= a\otimes_B \varphi(b)1_A =\\
&= a\otimes_B a' =\\
&= (a\otimes 1_A)a' = \eta (a)a'.
\end{align*}
This concludes the proof.
\end{proof}

We remark that this construction does not induce $J$-equivalence unless the surjective morphism $\varphi :B\to A$ is an isomorphism.
Indeed, if $B$ is a direct summand of ${}_BA_B$, then $A$ must be faithful as $B$-module, implying that $\ker \varphi$ is trivial.
However, this does not in itself exclude the possibility of $A$ and $B$ being $J$-equivalent.

\subsection{Subalgebras}
Let $B\subseteq A$ be a subalgebra.
Recall that $A\supseteq B$ is a 
\begin{itemize}
\item \emph{split extension} if $B$ is a direct summand of $A$ as $B$-$B$-bimodules;
\item \emph{separable extension} if $A$ is a direct summand of $A\otimes_B A$ as $A$-$A$-bimodules;
\end{itemize}
see further e.g. \cite{Ka1}.
Clearly, if $A\supseteq B$ is split, then $B\geq_J A$, and if $A\supseteq  B$ is separable, then $A\geq_J B$.

Split extensions $A\supseteq B$ can also be characterised by a surjective $B$-$B$-bimodule morphism $E:A\to B$ such that $E|_B=\mathrm{id}_B$.

One useful criteria for determining whether an extension $A\supseteq B$ is separable is given in \cite[Proposition~1.2]{Ka3}: if $\Bbbk$ is perfect, and $B\subseteq A$ is a subalgebra such that $\mathrm{rad}(B)$ is an ideal in $A$, then $A\supseteq B$ is separable if and only if $\mathrm{rad}(B)=\mathrm{rad}(A)$.

\begin{lemma}\label{lemma_sep}
Assume that $\Bbbk$ is a perfect field and  $A=\Bbbk Q/I$ the path algebra of a quiver modulo an admissible ideal.
Then any partition $Q_0=X_1\cup \ldots \cup X_m$, where the $X_i$'s are pairwise disjoint nonempty subsets of $Q_0$,  gives rise to a subalgebra $B$ such that $A\supseteq B$ is a separable extension.
\end{lemma}

\begin{proof}
We construct a quiver $Q'$ and an admissible ideal $I'$ such that $B=\Bbbk Q'/I'$ by ''contracting'' in $Q$ according to the partition $Q_0=\cup_{i=1}^m X_i$.
Set $Q'_0=\{ 1,\ldots ,m\}$.
The arrows in $Q'$ are in bijection with the arrows in $Q$: for each arrow $i\xrightarrow{\alpha}j$ in $Q$ there is an arrow $s_i\xrightarrow{\alpha'}s_j$ in $Q'_0$,
where $i\in X_{s_i}$ and $j\in X_{s_j}$.
The relations are also inherited from $A$: if $a\in I$, then the corresponding path $a'$ in $\Bbbk Q'$ is in $I'$.
Then there is an injective algebra homomorphism $\varphi :B\to A$ determined by
\begin{align*}
\varepsilon_i'&\mapsto \sum_{x\in X_i} \varepsilon_x,\quad i=1,\ldots ,m\\
\alpha '&\mapsto \alpha.
\end{align*}
As subalgebra $B\subseteq A$ we have $\mathrm{rad}(A)=\mathrm{rad}(B)$, so by \cite[Prop~1.2]{Ka3}, $A\supseteq B$ is a separable extension.
\end{proof}

\begin{example}\label{ex_sep}
Assume that $\Bbbk$ is perfect.
\begin{enumerate}[(i)]
\item Let $A=\Bbbk Q/I$, where $Q$ is
\begin{displaymath}
\xymatrix{1\ar[r]^\alpha & 2 \ar[r]^\beta & 3\ar[r]^\gamma & 4}
\end{displaymath}
and $I=(\beta \alpha )$.
Consider the partition
\begin{align*}
Q_0=\{1\} \cup \{ 2\} \cup \{3,4\} .
\end{align*}
Then $B$ is the subalgebra isomorphic to $\Bbbk Q'/I'$, where
\begin{displaymath}
Q'=\xymatrix{
1\ar[r]^\alpha & 2\ar[r]^\beta & 3\ar@(ul,ur)[]^\gamma
}
\end{displaymath}
and $I'=( \beta \alpha ,\gamma^2)$. Here, the node 3 is the ''contraction'' of 3 and 4 in $Q$.
By the above lemma $A\supseteq B$ is separable, and hence $A\geq_J B$.
\item Denote by $\Theta$ the path algebra of the Kronecker quiver
\begin{displaymath}
\xymatrix{
1\ar@/^/[r]^\alpha \ar@/_/[r]_\beta & 2,
}
\end{displaymath}
and by $A_3'$ the path algebra of the non-uniformly oriented quiver of Dynkin type $A$ on 3 vertices
\begin{displaymath}
\xymatrix{
1\ar[r]^\alpha & 2 & 3\ar[l]_{\beta}.
}
\end{displaymath}
Then the partition $\{ 1,3 \}\cup \{2\}$ of the nodes of the latter yields, as in the proof of  Lemma~\ref{lemma_sep}, a subalgebra of $A_3'$ isomorphic to $\Theta$, such that $A_3'\supseteq \Theta$ is separable.
\end{enumerate}
\end{example}

\subsection{Invariants under group action}
Let $A$ be a $\Bbbk$-algebra, and $G$ a finite abelian group acting on $A$ via automorphisms. Denote by $A^G$ the subalgebra of $G$-invariant elements of $A$, i.e.
\begin{align*}
A^G=\{ a\in A\mid g(a)=a \ \forall g\in G\} .
\end{align*}

For an element $g\in G$, we denote by ${}^gA$ and $A^g$ the 
$A$-$A$-bimodule $A$, where the left or the right $A$-action is twisted by $g$, respectively. 

\begin{theorem}\label{thm_group}
\begin{enumerate}[(i)]
\item If $\mathrm{char}(\Bbbk)$ does not divide the order of $G$, then $A\leq_J A^G$.
\item If $A$ is basic and the action of $G$ is such that the induced action on the underlying 
quiver of $A$ is free, then $A^G\leq_J A$.
\item Assume that $A$ is basic, $\mathrm{char}\, (\Bbbk )\nmid G$, the action of $G$ on the underlying quiver of $A$ is free, $A^G$ is symmetric, and $\mathrm{Hom}_\Bbbk(A,\Bbbk) \simeq {}_AA^{g}_A$ for some $g\in G$. Then $A$ and $A^G$ are symmetrically separably equivalent. 
\end{enumerate}
\end{theorem}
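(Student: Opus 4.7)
The plan is to prove the three parts in succession, using the algebra $A$ itself, viewed with various bimodule structures via the inclusion $A^G\hookrightarrow A$, as the inducing bimodule throughout.

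For part $(i)$, I take the bimodules ${}_{A^G}A_A$ and ${}_AA_{A^G}$, whose tensor product over $A$ is isomorphic to ${}_{A^G}A_{A^G}$. It then suffices to exhibit $A^G$ as a direct summand of $A$ in the category of $A^G$-bimodules. The Reynolds operator $E\colon A\to A^G$ given by $E(a)=\frac{1}{|G|}\sum_{g\in G}g(a)$ is well-defined by the assumption on the characteristic; it is an $A^G$-bimodule map since $G$ acts by algebra automorphisms fixing $A^G$ pointwise, and it is a retraction of the inclusion $\iota\colon A^G\hookrightarrow A$.

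For part $(ii)$, I swap the directions and consider ${}_AA_{A^G}$ and ${}_{A^G}A_A$, whose tensor product over $A^G$ is $A\otimes_{A^G}A$. The goal is to realise ${}_AA_A$ as a direct summand by exhibiting a separability idempotent for the multiplication map $\mu\colon A\otimes_{A^G}A\to A$. Since $A$ is basic with primitive idempotents $\{e_i\}_{i\in I}$ which $G$ permutes freely, I set $\sigma(1)=\sum_{i\in I}e_i\otimes e_i$. Clearly $\mu(\sigma(1))=1$, and the centrality condition $a\sigma(1)=\sigma(1)a$ reduces by $\Bbbk$-linearity to verifying $p\otimes e_{s(p)}=e_{t(p)}\otimes p$ in $A\otimes_{A^G}A$ for each path $p$ in the quiver of $A$. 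This follows from the orbit-sum element $y_p=\sum_{h\in G}h(p)\in A^G$: freeness of the action forces $e_{t(p)}h(p)=0$ unless $h$ is the identity, yielding $e_{t(p)}y_p=p$ and $y_pe_{s(p)}=p$, so the two tensor expressions agree after balancing $y_p$ across the tensor.

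For part $(iii)$, the bimodules $M={}_AA_{A^G}$ and $N={}_{A^G}A_A$ from $(i)$ and $(ii)$ already induce the two-sided equivalence. What remains is to verify left-right projectivity and biadjointness. Projectivity of $A$ over itself is automatic; for projectivity of $A$ as a left and right $A^G$-module, I invoke the classical Galois-descent principle, which under free action on primitives together with $\mathrm{char}(\Bbbk)\nmid|G|$ makes the extension $A^G\subseteq A$ a $G$-Galois extension and hence $A$ a finitely generated projective $A^G$-module on both sides. For biadjointness, the pair $(M\otimes_{A^G}-,N\otimes_A-)=(\mathrm{ind},\mathrm{res})$ is always adjoint. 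For the reverse adjunction, Lemma~\ref{lemma_adj} reduces matters to checking $\mathrm{Hom}_{A^G\text{-}}({}_{A^G}A_A,A^G)\cong{}_AA_{A^G}$ as $A$-$A^G$-bimodules. Since $A^G$ is symmetric, tensor-hom adjunction evaluates this $\mathrm{Hom}$ as $D(A)$; the hypothesis $D(A)\cong{}_AA^g_A$ identifies it with $A^g$, and because $g$ fixes $A^G$, the twisted bimodule $A^g$ agrees with $A$ as an $A$-$A^G$-bimodule.

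I expect the main obstacle to be in part $(ii)$: verifying centrality of the candidate separability idempotent $\sigma(1)$ depends essentially on the free action so that orbit sums of paths lie in $A^G$ and serve as bridges across the tensor; the construction fails without freeness, as one already sees for $\Bbbk[x]/(x^2)$ with the sign action. A secondary difficulty in $(iii)$ is the Galois-projectivity claim, which, though classical, should be argued carefully in our precise setting of basic finite-dimensional $\Bbbk$-algebras.
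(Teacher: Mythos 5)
Your proposal is correct, and for parts $(i)$ and $(ii)$ it takes a genuinely different route from the paper. For $(i)$, the paper decomposes $A$ into $G$-isotypic components $A=\bigoplus_\xi A_\xi$ (using the standing assumption that $G$ is abelian) and observes that each $A_\xi$ is an $A^G$-$A^G$-bimodule, with $A^G=A_{\mathrm{triv}}$ a summand; your Reynolds operator $E=\tfrac{1}{|G|}\sum_g g$ achieves the same splitting in one line and has the advantage of working for arbitrary finite $G$. For $(ii)$, the paper proves the stronger statement $A\otimes_{A^G}A\simeq\bigoplus_{g\in G}{}^gA$ via the explicit map $a\otimes b\mapsto\sum_g g(a)1_gb$ together with a dimension count (which itself leans on freeness of $A$ over $A^G$); you instead split the multiplication map directly with the separability element $\sum_i e_i\otimes e_i$, whose centrality follows from the orbit sums $y_a=\sum_h h(a)\in A^G$ and freeness of the action on the idempotents. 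Your argument is more economical and sidesteps the dimension count, at the cost of not producing the full decomposition of $A\otimes_{A^G}A$ (which the theorem does not need). Part $(iii)$ of your proof is essentially the paper's: the same reduction via Lemma~\ref{lemma_adj} to the two $\mathrm{Hom}$ identifications, the same use of symmetry of $A^G$ and of the hypothesis $D(A)\simeq{}_AA^g_A$, and the same observation that $g$ fixes $A^G$ so that $A^g\otimes_{A^G}A^G\simeq{}_AA_{A^G}$. The one soft spot you correctly flag, projectivity of $A$ as a one-sided $A^G$-module, is handled no more rigorously in the paper, which asserts that $A$ is free of rank $|G|$ over $A^G$ on the basis of the free quiver action; making either argument fully precise amounts to the same standard fact about Galois coverings, so nothing essential is missing from your approach.
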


\begin{proof}
\begin{enumerate}[(i)]
\item 
To prove the first claim, we shall show that $A^G$ is a direct summand of 
$A\cong A\otimes_A A$ as an $A^G$-$A^G$-bimodule.
Indeed, as $A$ is a $G$-module and $\mathrm{char}(\Bbbk) \nmid |G|$, we can write $A$ as a direct sum of isotypic components with respect to the $G$-action.
Since $G$ is abelian, we have that
\begin{align*}
A = \bigoplus_{\xi \in \Xi} A_{\xi},
\end{align*}
where $\Xi$ is the (finite) set of all $G$-characters and
\begin{align*}
A_\xi = \{ a\in A\mid ga = \xi (g) a\ \forall g\in G\} .
\end{align*}
Then $A^G$ is exactly the trivial component corresponding to the trivial character $\mathrm{triv}:G\to \Bbbk$, which sends $g\mapsto 1$ for all $g\in G$.
Moreover, each $A_\xi$ is an $A^G$-$A^G$-bimodule. Indeed, if $a\in A^G= A_\mathrm{triv}$ and $b\in A_\xi$, then
\begin{align*}
g (ab) = g(a)g(b) = a \xi (g) b = \xi (g) ab
\end{align*}
so $ab\in A_\xi$. Similarly $ba\in A_\xi$. Hence $A = \bigoplus_{\xi \in \Xi} A_{\xi}$ is a decomposition of $A$ into a direct sum of $A^G$-$A^G$-bimodules, which has a summand isomorphic to $A^G$.
This implies that $A\leq_J A^G$.

\item Consider the $A$-$A$-bimodule $A\otimes_{A^G} A$. 
We claim that, under our assumptions, it is isomorphic to
\begin{align*}
Q=\bigoplus_{g\in G} {}^gA.
\end{align*}
Given this, if we denote the unit of $G$ by $e$, we have ${}^eA=A$
which implies the statement we are trying to prove.

Denote by $1_g$ the element $1$ in ${}^gA$.
Consider the linear map
\begin{align*}
\varphi :A\otimes_{A^G} A &\to Q \\
a\otimes b &\mapsto \sum_{g\in G} g(a)1_gb .
\end{align*}
First of all, we note that this morphism is well-defined, since, for any $a\in A^G$, and any $b,c\in A$,
\begin{align*}
\varphi (ba\otimes c)  &= \sum_{g\in G} g(ba) 1_g c =\\
&= \sum_{g\in G} g(b)g(a) 1_g c =\\
&= \sum_{g\in G} g(b) a 1_g c =\\
&= \sum_{g\in G} g(b) 1_g a c = \varphi (b\otimes ac).
\end{align*}
Moreover, $\varphi$ is a homomorphism of $A$-$A$-bimodules. 
Indeed, denote both the left and the right action by $A$ on ${}^gA$ by $\bullet$. Then
\begin{align*}
\varphi \big( (ab)\otimes (cd) \big) &= \sum_{g\in G} g(ab)1_g (cd) =\\
&=\sum_{g\in G} g(a)g(b)1_g cd =\\
&=  \sum_{g\in G}a \bullet g(b)1_g c \bullet d=\\
&= a \bullet \left( \sum_{g\in G} g(b)1_g c\right) \bullet d =  a \bullet \varphi (b\otimes c) \bullet d .
\end{align*}
It is clear that $\varphi$ is surjective. Let us now compare dimensions. First, we have $\dim Q= |G|\cdot \dim A$.

Next we note that the rank of the free $A^G$-module $A$ is $|G|$. Indeed, recall 
our assumption that the action on the underlying quiver is free. By Burnside's lemma, it therefore follows that the number of orbits of the $G$-action is $\frac{\dim A}{|G|}$. This is the dimension of $A^G$. Since $A$ is free over $A^G$, the rank must be $|G|$.

This, in turn, implies that $\dim (A\otimes_{A^G} A)=|G|\dim(A)=\dim Q$. Consequently, $\varphi$ is an isomorphism, so the proof is complete.

\item By $(i)$ and $(ii)$, we already know that $A$ is a direct summand of ${}_AA\otimes_{A^G}A_A$, and $A^G$ is a direct summand of ${}_{A^G}A\otimes_AA_{A^G}$.
Moreover, $A$ is free as (left and right) $A^G$-module, so both ${}_AA_{A^G}$ and ${}_{A^G}A_A$ are left-right projective. Hence $A$ and $A^G$ are separably equivalent.
It remains to show that the functors
\begin{align*}
{}_AA\otimes_{A^G} - &:A^G\text{-mod}\to A\text{-mod}\\ 
{}_{A^G}A\otimes_{A} - &:A\text{-mod}\to A^G\text{-mod}
\end{align*}
are biadjoint.
By Lemma~\ref{lemma_adj}, it is enough to prove that
\begin{align}
\mathrm{Hom}_{A-} ({}_AA_{A^G},{}_AA_{A}) &\simeq {}_{A^G}A_{A} \label{eq:adj1} \\
\mathrm{Hom}_{A^G-} ({}_{A^G}A_{A},{}_{A^G}A^G_{A^G}) &\simeq {}_{A}A_{A^G}. \label{eq:adj2}
\end{align}
The first statement is immediate. To prove \eqref{eq:adj2}, we use that $A^G$ is assumed to be symmetric and apply the tensor-hom adjunction:
\begin{align*}
\mathrm{Hom}_{A^G-} ({}_{A^G}A_{A},{}_{A^G}A^G_{A^G}) &\simeq \mathrm{Hom}_{A^G-} ({}_{A^G}A_{A},{}_{A^G}\mathrm{Hom_\Bbbk}(A^G,\Bbbk )_{A^G}) \simeq \\
&\simeq \mathrm{Hom}_\Bbbk ({}_{A^G}A_A,\Bbbk )\simeq \\
&\simeq \mathrm{Hom}_\Bbbk ({}_{A^G}A_A,\Bbbk ) \otimes_{A^G} A^G \simeq\\
&\simeq \mathrm{Hom}_\Bbbk ({}_{A}A_A,\Bbbk ) \otimes_A A\otimes_{A^G} A^G .
\end{align*}
By assumption, $\mathrm{Hom}_\Bbbk ({}_{A}A_A,\Bbbk )\simeq {}_AA_A^g$ for some $g\in G$, so
\begin{displaymath}
 \mathrm{Hom}_\Bbbk ({}_{A}A_A,\Bbbk ) \otimes_{A^G} A^G \simeq {}_AA^g\otimes_{A^G} A^G .
\end{displaymath}
However, since for any $a\in A^G$ we have by definition $g(a)=a$, it follows that
\begin{align*}
{}_AA_A^g\otimes_{A^G} A^G \simeq {}_AA\otimes_{A^G} A^G \simeq {}_AA_{A^G}.
\end{align*}
This concludes the proof.
\end{enumerate}
\end{proof}

We remark that if the conditions of $(i)$ and $(ii)$ both hold, then $A$ and $A^G$ are separably equivalent.
The additional conditions in part $(iii)$ 

\begin{example}
Let $\Bbbk$ be a field such that $\mathrm{char}\, (\Bbbk )\neq 2$, and let $Q$ be the quiver
\begin{displaymath}
\xymatrix{
1\ar@/^/[r]^\alpha & 2\ar@/^/[l]^\beta
} .
\end{displaymath}
Set $A=\Bbbk Q/(\alpha \beta,\, \beta \alpha )$.
Consider the cyclic group $C_2=\left\langle c\right\rangle$ and let $c$ act on $Q$ by
\begin{align*}
1\leftrightarrow 2,\quad \alpha \leftrightarrow \beta .
\end{align*}
This free action on $Q$ extends to an automorphism of $A$.
The algebra of ivnariants under this action of $C_2$ is spanned by $1$ and $\alpha +\beta$, and there is an obvious isomorphism with the dual numbers $D=\Bbbk [x]/(x^2)$ - a symmetric algebra.

A basis of $A$ is $\{ \varepsilon_1,\varepsilon_2,\alpha ,\beta\}$, where $\varepsilon_i$ is the path of length zero  at $i$.
Denote by $f_1,f_2,f_\alpha ,f_\beta$ the corresponding dual basis elements.
It is straightforward to check that the nonzero left and right actions of $A$ on the basis elements are as follows.
\begin{displaymath}
\begin{array}{c|c|c|c|c}
\bullet & f_1 & f_2 & f_\alpha & f_\beta \\
\hline 
\varepsilon_1 & f_1 & 0 & f_\alpha & 0\\
\hline 
\varepsilon_2 & 0 & f_2 & 0 & f_\beta \\
\hline 
\alpha & 0 & 0 & f_2 & 0\\
\hline 
\beta & 0 & 0 & 0 & f_1
\end{array} 
\hspace{1cm}
\begin{array}{c|c|c|c|c}
\bullet & \varepsilon_1 & \varepsilon_2 & \alpha & \beta \\
\hline
f_1 & f_1 & 0 & 0 & 0\\
\hline
f_2 & 0 & f_2 & 0 & 0\\
\hline
f_\alpha & 0 & f_\alpha & f_1 & 0\\
\hline
f_\beta & f_\beta & 0 & 0 & f_2
\end{array}
\end{displaymath}
Now
\begin{align*}
f_\alpha &\mapsto \varepsilon_1\\
f_\beta &\mapsto \varepsilon_2\\
f_2&\mapsto \alpha\\
f_1&\mapsto \beta
\end{align*}
extends  to an isomorphism of $A$-$A$-bimodules $\mathrm{Hom}_\Bbbk (A,\Bbbk )\to A^c$. By Theorem~\ref{thm_group}$(iii)$, $A$ and $D$ are symmetrically separably equivalent.
\end{example}

\begin{remark}
In the situation of the above theorem, we saw in the proof of part $(iii)$ that, if $\mathrm{char}\, \Bbbk \nmid |G|$ and the action of $G$ on the underlying quiver is free, then $A$ and $A^G$ are separably equivalent.
\end{remark}

\subsection{Skew group algebras}
In this section we shall recall the notion of skew group algebras, see further e.g. \cite{RR}.
This turns out to be rich source of examples of $J$-equivalent algebras.

Let $A$ be  a $\Bbbk$-algebra and $G$ a group acting on $A$ via homomorphisms.
Define the \emph{skew group algebra} $A\ast G$ as follows.
As vector space, $A\ast G$ is $A\otimes_\Bbbk \Bbbk G$, and the multiplication is given by
\begin{displaymath}
(a\otimes g)(a'\otimes g') = ag(a')\otimes gg'.
\end{displaymath}
There is a natural algebra monomorphism $\iota :A\to A\ast G$ given by $\iota (a)=A\otimes e$, where $e\in G$ is the identity element. Hence $A$ is a subalgebra of $A\ast G$, and $\iota$ induces an action of $A$ on $A\ast G$.

\begin{theorem}\label{thm_skew}
If $\mathrm{char}\, (\Bbbk )$ does not divide the order of $G$, then $A$ and $A\ast G$ are symmetrically separably equivalent.
\end{theorem}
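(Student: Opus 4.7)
The plan is to establish the symmetric separable equivalence using the bimodules $M={}_A(A\ast G)_{A\ast G}$ and $N={}_{A\ast G}(A\ast G)_A$ arising from the inclusion $\iota:A\hookrightarrow A\ast G$. Since $A\ast G=\bigoplus_{g\in G}A\otimes g$ is free as both a left and a right $A$-module of rank $|G|$, and free of rank one as an $A\ast G$-module, both $M$ and $N$ are left-right projective. The composition $M\otimes_{A\ast G}N$ is just the $A$-$A$-bimodule $A\ast G$, which decomposes as $\bigoplus_{g\in G}{}^{e}\!A^{g}$ (twisted regular bimodules) and therefore contains ${}_AA_A$ as a direct summand. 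So one inclusion needed for separable equivalence is automatic.

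The substantive step is to show that ${}_{A\ast G}(A\ast G)_{A\ast G}$ is a direct summand of $N\otimes_A M=(A\ast G)\otimes_A(A\ast G)$. For this I would introduce the multiplication map $\mu:(A\ast G)\otimes_A(A\ast G)\to A\ast G$ and construct a bimodule-theoretic section by averaging:
\begin{displaymath}
\sigma:A\ast G\to (A\ast G)\otimes_A (A\ast G),\qquad \sigma(x)=x\cdot c=c\cdot x,
\end{displaymath}
where $c=\tfrac{1}{|G|}\sum_{g\in G}(1\otimes g^{-1})\otimes_A(1\otimes g)$. The hypothesis $\mathrm{char}\,\Bbbk\nmid|G|$ makes $c$ meaningful, and a direct check shows that $c$ is a Casimir element: it commutes with every $a\otimes e$ by absorbing $a$ through the $A$-balanced tensor (the $g$-twist on the right side cancels against $g^{-1}$ on the left), and it commutes with every $1\otimes h$ by the reindexing $g\mapsto gh$ in the sum. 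A quick computation gives $\mu(c)=\tfrac{1}{|G|}\sum_{g}1\otimes e=1$, so $\mu\circ\sigma=\mathrm{id}$; hence $\sigma$ splits $\mu$ as a bimodule map, and $A\ast G$ is a summand of $N\otimes_A M$. Combined with the first step, this already yields separable equivalence.

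It remains to promote this to a symmetric separable equivalence, i.e.\ to check biadjointness of the functors $F=M\otimes_{A\ast G}-=\mathrm{Res}^{A\ast G}_A$ and $H=N\otimes_A-=\mathrm{Ind}^{A\ast G}_A$. The pair $(H,F)$ is adjoint by the standard tensor-hom adjunction for $N={}_{A\ast G}(A\ast G)_A$. For the opposite adjunction $(F,H)$, I will apply Lemma~\ref{lemma_adj}: $M$ is projective (in fact free) as a left $A$-module, so it suffices to exhibit an isomorphism $\mathrm{Hom}_{A-}(M,A)\simeq N$ as $(A\ast G)$-$A$-bimodules. Picking the left $A$-linear dual basis $\{\phi_g\}_{g\in G}$ defined by $\phi_g(1\otimes h)=\delta_{g,h}$, the map $\Psi:\phi_g\mapsto 1\otimes g^{-1}$ is the desired isomorphism; one verifies bimodule-linearity by computing $(1\otimes k)\phi_g=\phi_{gk^{-1}}$, $(a\otimes e)\phi_g=\phi_g\cdot g(a)$ and the corresponding right $A$-action, and matching these with multiplication in $A\ast G$.

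The main obstacle I expect is purely bookkeeping: keeping track of the twists when translating between the left $A\ast G$-action on $\mathrm{Hom}_{A-}(A\ast G,A)$ (which uses the right $(A\ast G)$-action on the source $M$) and the corresponding action on $A\ast G$. Once the dual-basis computation is arranged carefully, the Frobenius-extension identity $\mathrm{Hom}_{A-}(A\ast G,A)\simeq A\ast G$ drops out and the biadjointness — and hence the conclusion via Corollary~\ref{cor_adj} — follows. No new hypotheses beyond $\mathrm{char}\,\Bbbk\nmid|G|$ are needed, and the construction of $c$ is the only place this assumption is used.
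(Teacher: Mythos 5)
Your proof is correct, and it follows the same logical skeleton as the paper's: establish that ${}_AA_A$ is a summand of $A\ast G$ as an $A$-$A$-bimodule, that the multiplication map $(A\ast G)\otimes_A(A\ast G)\to A\ast G$ splits, and that induction and restriction are biadjoint, then invoke Corollary~\ref{cor_adj}. The difference is that the paper simply quotes all three facts from Reiten--Riedtmann \cite[Theorem~1.1]{RR}, whereas you prove them from scratch: the decomposition $A\ast G\simeq\bigoplus_{g\in G}A^{g}$ of twisted regular bimodules, the explicit Casimir element $c=\tfrac{1}{|G|}\sum_{g}(1\otimes g^{-1})\otimes_A(1\otimes g)$ splitting the multiplication (I checked that $c$ commutes with $a\otimes e$ and with $1\otimes h$ and that $\mu(c)=1$; all three computations are right), and the Frobenius-extension isomorphism $\mathrm{Hom}_{A-}(A\ast G,A)\simeq A\ast G$ via the dual basis $\phi_g\mapsto 1\otimes g^{-1}$, which together with Lemma~\ref{lemma_adj} gives the second adjunction. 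Your actions $(1\otimes k)\phi_g=\phi_{gk^{-1}}$ and $(a\otimes e)\phi_g=\phi_g\cdot g(a)$ match the multiplication in $A\ast G$ under $\Psi$, so the bimodule isomorphism holds. The self-contained version buys transparency about exactly where $\mathrm{char}\,\Bbbk\nmid|G|$ enters (only in the averaging), at the cost of the bookkeeping you anticipated; the paper's version is shorter but opaque without \cite{RR} in hand. No gaps.
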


\begin{proof}
By \cite[Theorem~1.1]{RR}, we have that $A$ and $A\ast G$ satisfy the following properties.
\begin{enumerate}[(A)]
\item 
\begin{enumerate}[(i)]
\item $A$ is a direct summand of $A\ast G$ as $A$-$A$-bimodule.
\item The multiplication map $A\ast G\otimes_A A\ast G\to A\ast G$ is a split epimorphism of $A\ast G$-$A\ast G$-bimodules.
\end{enumerate}
\item The functors ${}_{A\ast G}A\ast G\otimes_A -$ and ${}_AA\ast G\otimes_{A\ast G} -$ are biadjoint.
\item $\mathrm{rad}(A)(A\ast G)=(A\ast G) \mathrm{rad} (A) = \mathrm{rad} (A\ast G)$.
\end{enumerate}
By (A)(i), $A\geq_J A\ast G$, and by (A)(ii), $A\leq_J A\ast G$.
Moreover, (B) together with Corollary~\ref{cor_adj} implies that $A$ and $A\ast G$ are symmetrically separably equivalent.
\end{proof}

We remark also that  \cite[Theorem~1.3]{RR} gives many nice properties of the relationship between  $A$ and $A\ast G$.
For  example, $A$ and $A\ast G$ have the same Loewy length, and $\mathrm{rad}(A\ast G)=\mathrm{rad}(A)\otimes_\Bbbk \Bbbk G$.

In Section~\ref{s_examples}, we will provide many examples arising from the above theorem. More can be found in e.g. \cite{RR}, \cite{De} and \cite{HY}.

\section{Representation type and tensor algebras}\label{s_reptype}

\subsection{Separable equivalence}

Many interesting properties of separable equivalence can be found in \cite{Pe}. If $A$ and $B$ are separably equivalent, then  we have, for example, the following:
\begin{itemize}
\item $A$ and $B$ have the same representation type (when $\Bbbk$ is algebraically closed).
\item If $A$ is domestic or of polynomial growth, then so is $B$.
\item For any algebra $C$, the algebras 
$A\otimes_\Bbbk C$ and $B\otimes_\Bbbk C$ are separably equivalent.
\end{itemize}
The main aim of this section is to generalize these facts  to two-sided equivalence.

From \cite[Corollary~6.1]{Ka1} we also deduce that, if $B$ separably divides $A$, then\begin{displaymath}
\mathrm{gl.dim}(B)\geq \mathrm{gl.dim}(A).
\end{displaymath}
In particular, separably equivalent algebras have the same global dimension.

As in \cite[Section~4]{Pe}, symmetric separable equivalence can be extended from algebras to categories as follows: two exact categories $\mathcal{A}$ and $\mathcal{B}$ are symmetrically separably equivalent if there are functors
\begin{displaymath}
\xymatrix{
\mathcal{A}\ar@/^/[r]^F & \mathcal{B}\ar@/^/[l]^G
}
\end{displaymath}
such that $F$ and $G$ are biadjoint, and the identity functors are direct summands of $GF$ and $FG$.

Then two algebras $A$ and $B$ are symmetrically separably equivalent if $A\text{-mod}$ and $B\text{-mod}$ are.

If we have two pairs of symmetrically separably equivalent categories
\begin{displaymath}
\xymatrix{
\mathcal{A}\ar@/^/[r]^F & \mathcal{B}\ar@/^/[l]^G
}
\quad
\xymatrix{
\mathcal{C}\ar@/^/[r]^H & \mathcal{D}\ar@/^/[l]^K
}
\end{displaymath}
then the following is also a symmetrically separable equivalence.
\begin{displaymath}
\xymatrix{
\mathrm{Fun}( \mathcal{A},\mathcal{C}) \ar@/^/[r]^{H\circ - \circ G} & \mathrm{Fun}(\mathcal{B},\mathcal{D}) \ar@/^/[l]^{K\circ -\circ F}
}
\end{displaymath}

Moreover, if $\mathcal{A}'\subseteq \mathcal{A}$ and $\mathcal{B}'\subseteq B'$ are full subcategories 
such that $F\mathcal{A}'\subseteq \mathcal{B}'$ and $G\mathcal{B}'\subseteq \mathcal{A}'$,
then $\mathcal{A}'$ and $\mathcal{B}'$ are symmetrically separably equivalent categories. 
So are $\mathcal{A}/\mathcal{A}'$ and $\mathcal{B}/\mathcal{B}'$.

In particular, if $A$ and $B$ are symmetrically separably equivalent algebras, then 
$\mathrm{Fun}(A\text{-mod} ,\Bbbk\text{-mod})$ and $\mathrm{Fun}( B\text{-mod} ,\Bbbk\text{-mod})$
are symmetrically separably equivalent categories, so that the Auslander algebras of $A$ and $B$ are symmetrically separably equivalent.
Moreover, the stable  categories $A$-\underline{mod} and $B$-\underline{mod} are symmetrically separably equivalent.

\subsection{Representation type}
Throughout this subsection, $\Bbbk$ is an algebraically closed field, and modules are not necessarily finite-dimensional. We are going to generalize \cite[Section~5]{Pe} from separable equivalences to two-sided equivalences. In particular, we shall generalize \cite[Theorem~6]{Pe} to prove that two-sided equivalence preserves representation type. As it turns out, most of the arguments in 
\cite{Pe} also work in the setup of two-sided equivalence, so our arguments
follow closely the original arguments from \cite{Pe}.

Recall that a $\Bbbk$-algebra $A$ is of {\em finite representation type} if there are finitely many isomorphism classes of finite-dimensional indecomposable $A$-modules. If $A$ is not of finite representation type, it is of {\em infinite representation type}. This splits into two main cases.
One says that $A$ is of {\em tame representation type} if, for each $n\in \mathbb{N}$, all but finitely many isomorphism classes of $n$-dimensional indecomposables occur in a finite number of one-parameter families. Finally, $A$ is said to be
of {\em wild representation type} if there are $n$-parameter families of indecomposable $A$-modules for arbitrarily large $n$.

\subsubsection{Generic modules}
If $A$ is a $\Bbbk$-algebra and $M$ a right $A$-module, then $M$ is also a left $\mathrm{End}_A(M)$-module. Following \cite{CB}, we say that the \emph{endolength} of $M$, denoted $\mathrm{end.len.}(M)$, is the length of $M$ as $\mathrm{End}_A(M)$-module. If $M$ has finite endolength, we call it \emph{endofinite}. An endofinite module which has infinite length as $A$-module is called \emph{generic}.

For a positive integer $d$, denote by $g_A(d)$ the number of generic $A$-modules of endolength $d$.

If $A$ is of finite or tame representation type, we define
$\mu_A(n)$ as the smallest integer $m$ such that there are $A$-$\Bbbk [x]$-bimodules $M_1,\ldots ,M_m$ which are free of rank $n$ as $\Bbbk [x]$-modules, and such that all but finitely many indecomposable $A$-modules of dimension $n$ belong to the set
\begin{displaymath}
\{ M_i\otimes \Bbbk [x] /(x-\lambda )\mid \lambda\in \Bbbk,\, i=1,\ldots ,m\} .
\end{displaymath}
The following theorem is proved in \cite[Section~5.6]{CB}.

\begin{theorem}\label{thm_cb}
With notation as above, $\mu_A(n)=\sum_{d|n} g_A(d)$.
\end{theorem}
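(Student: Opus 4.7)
The plan is to set up a bijective correspondence between the components $M_1,\ldots,M_{\mu_A(n)}$ of a minimal one-parameter witness of $\mu_A(n)$ and the generic $A$-modules of endolength dividing $n$, via localization at the generic point of $\mathrm{Spec}\,\Bbbk[x]$. The underlying structural input from \cite{CB} is that every endofinite indecomposable $A$-module is either finite-dimensional or generic, and that endofinite modules admit a Krull-Schmidt decomposition into such indecomposables.

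First, I would perform the forward construction. Given an $A$-$\Bbbk[x]$-bimodule $M$ free of rank $n$ as right $\Bbbk[x]$-module, the localization $\widetilde M := M\otimes_{\Bbbk[x]}\Bbbk(x)$ is an $A$-module carrying a commuting $\Bbbk(x)$-action. Since $\widetilde M$ has $\Bbbk(x)$-dimension $n$ and $\Bbbk(x)\subseteq\mathrm{End}_A(\widetilde M)$, the endolength of $\widetilde M$ is bounded by $n$. By the classification, $\widetilde M$ decomposes as a finite direct sum of generic $A$-modules $G_1,\ldots,G_r$, with endolengths $d_1,\ldots,d_r$ summing to $\mathrm{end.len.}(\widetilde M)$. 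For a \emph{minimal} family witnessing $\mu_A(n)$, each $M_i$ must localize to a single generic module (else one could split $M_i$ into smaller pieces covering the same $n$-dimensional indecomposables, contradicting minimality), whose endolength $d_i$ divides $n$.

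Conversely, given a generic $A$-module $G$ of endolength $d$ with $d\mid n$, I would build a rank-$n$ family realizing the $n$-dimensional specializations of $G$. Using \cite{CB}, the endomorphism ring $\mathrm{End}_A(G)$ becomes, after quotient by its radical, a finite extension of $\Bbbk(x)$; choosing a discrete valuation subring $R\subseteq\mathrm{End}_A(G)$ with residue field $\Bbbk$ and an $A$-$R$-lattice $\widetilde G\subseteq G$, one obtains by pullback along $\Bbbk[x]\hookrightarrow R$ an $A$-$\Bbbk[x]$-bimodule free of rank $d$; an appropriate rank-$n/d$ extension then produces a rank-$n$ bimodule whose specializations at almost all $\lambda\in\Bbbk$ are indecomposable of dimension $n$, recovering the $n$-dimensional indecomposables arising from $G$.

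Finally, I would check that the two constructions are mutually inverse on minimal families, so that each generic module of endolength $d\mid n$ gives rise to exactly one $M_i$ in any minimal witness, and that distinct generic modules give generically disjoint families. Summing contributions over $d\mid n$ then yields $\mu_A(n)=\sum_{d\mid n}g_A(d)$. The main obstacle will be the uniqueness aspect: showing that the localization of any minimal $M_i$ is truly a single generic module rather than a nontrivial direct sum, and that distinct minimal families correspond to distinct generic modules; this rests on the Krull-Schmidt property for endofinite modules combined with a careful dimension count inside $\mathrm{End}_A(\widetilde M)$, both of which constitute the technical heart of \cite{CB}.
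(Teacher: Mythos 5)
The first thing to note is that the paper does not prove this statement at all: Theorem~\ref{thm_cb} is quoted verbatim from Crawley-Boevey's \emph{Tame algebras and generic modules} (the sentence immediately preceding it reads ``The following theorem is proved in [CB, Section~5.6]''), so there is no in-paper argument to compare yours against. Your outline is a reasonable reconstruction of the strategy actually used in \cite{CB} --- a correspondence between the members of a minimal parametrizing family and generic modules, going forward by localizing $M\otimes_{\Bbbk[x]}\Bbbk(x)$ and backward by choosing an $A$-$R$-lattice inside a generic module for a suitable discrete valuation ring $R$. In that sense you have identified the right route.

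However, as a proof it has genuine gaps, most of which you flag yourself. (1) The decomposition of $\widetilde M=M\otimes_{\Bbbk[x]}\Bbbk(x)$ provided by Theorem~\ref{thm_prest} allows indecomposable summands of \emph{finite} length (possibly with infinite multiplicity); you assert without argument that only generic summands occur, and ruling out finite-length summands is not automatic. (2) The divisibility $d_i\mid n$ is stated but not derived; it comes from the fact that for a generic module $G$ over a tame algebra the division ring $\mathrm{End}_A(G)/\mathrm{rad}$ is isomorphic to $\Bbbk(x)$ itself (one of the main theorems of \cite{CB}, stronger than the ``finite extension of $\Bbbk(x)$'' you invoke), together with a degree count relating $n$, $d$ and the induced extension of $\Bbbk(x)$ inside $\mathrm{End}_A(\widetilde M)$. (3) The ``appropriate rank-$n/d$ extension'' in the converse direction, the indecomposability of almost all specializations $M_i\otimes\Bbbk[x]/(x-\lambda)$, and the claim that the two constructions are mutually inverse on minimal families are exactly the content of Crawley-Boevey's Section~5 and cannot be waved through. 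So your proposal is a correct map of the terrain, but the technical heart --- which you explicitly defer to \cite{CB} --- is the proof; for the purposes of this paper the honest move is the one the author makes, namely to cite the result.
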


As is shown in \cite{Pe}, this gives the following characterisation of representation type.

\begin{corollary}
Let $A$ be a $\Bbbk$-algebra.
\begin{enumerate}[(i)]
\item $A$ is of finite representation type if and only if $g_A(d)=0$, for all $d$.
\item $A$ is of tame representation type if and only if $g_A(d)<\infty$, for all $d$, and $g_A(d)>0$ for some $d$.
\item $A$ is of wild representation type if and only if $g_A(d)=\infty$, for some $d$.
\end{enumerate}
\end{corollary}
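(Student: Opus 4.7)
The plan is to combine Theorem~\ref{thm_cb} with the definitions of the three representation types, using two additional inputs: first, that a generic module has, by definition, infinite length as an $A$-module, so finite-dimensional indecomposables contribute nothing to any $g_A(d)$; and second, an analogue of the second Brauer--Thrall conjecture due to Crawley-Boevey, which guarantees that over an algebraically closed field any algebra of infinite representation type admits at least one generic module.

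For part $(i)$, the forward direction is essentially tautological: if $A$ has only finitely many indecomposable modules up to isomorphism then all of them are finite-dimensional, so none can be generic and $g_A(d)=0$ for every $d$. For the converse I would argue contrapositively: if $A$ is of infinite representation type, then Crawley-Boevey's theorem produces a generic module of some endolength $d$, so $g_A(d)\geq 1$.

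For part $(ii)$, I would unfold the definition of tameness: it is the assertion that $\mu_A(n)<\infty$ for every $n$. By Theorem~\ref{thm_cb}, this is equivalent to $g_A(d)<\infty$ for every $d$. Combined with the characterisation in $(i)$, which distinguishes finite from infinite representation type by whether all $g_A(d)$ vanish, this yields the stated description of tameness. Part $(iii)$ is then immediate from Drozd's tame--wild dichotomy (valid over an algebraically closed field): the three representation types partition all algebras, so $A$ is wild precisely when it fails the conditions of both $(i)$ and $(ii)$, which happens exactly when $g_A(d)=\infty$ for some $d$.

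The main obstacle in this plan is the backward direction of $(i)$; it rests on the non-trivial input that infinite representation type already forces the existence of a generic module. Everything else reduces to bookkeeping with Theorem~\ref{thm_cb} and Drozd's dichotomy, and follows closely the arguments in \cite{Pe}.
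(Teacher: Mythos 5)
Your overall plan --- deriving the corollary from Theorem~\ref{thm_cb} together with Drozd's trichotomy and Crawley-Boevey's existence theorem for generic modules --- is the intended route; the paper itself gives no argument and simply defers to \cite{Pe}, which proceeds along exactly these lines. Still, two of your steps are weaker than you present them. First, the forward direction of $(i)$ is not tautological. Finite representation type is defined in terms of \emph{finite-dimensional} indecomposables only, whereas a generic module is by definition an indecomposable of infinite length; knowing that there are finitely many finite-dimensional indecomposables does not by itself exclude infinite-dimensional endofinite ones. You need either the Ringel--Tachikawa/Auslander theorem (over an algebra of finite type every module decomposes into finite-dimensional indecomposables), or, more in the spirit of this section, the observation that finite type forces $\mu_A(n)=0$ for every $n$, whence $\sum_{d\mid n}g_A(d)=0$ by Theorem~\ref{thm_cb} and so $g_A(d)=0$ for all $d$.

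Second, and more seriously, your derivations of the backward direction of $(ii)$ and the forward direction of $(iii)$ are circular as written. Theorem~\ref{thm_cb} is stated only for algebras of finite or tame type, since $\mu_A(n)$ is only defined there; it gives you ``tame $\Rightarrow$ all $g_A(d)<\infty$'' but says nothing about a wild algebra. Your argument for $(iii)$ --- that $A$ is wild precisely when it fails the conditions of both $(i)$ and $(ii)$ --- presupposes that $(ii)$ is already a biconditional, while your argument for the converse of $(ii)$ implicitly assumes $A$ is not wild so that Theorem~\ref{thm_cb} applies. A priori a wild algebra could have $g_A(d)<\infty$ for every $d$, and nothing in your proposal rules this out. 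The missing ingredient is the other half of Crawley-Boevey's theorem: a wild algebra is not generically tame, i.e.\ has $g_A(d)=\infty$ for some $d$. This is a genuine additional input (the hard implication of ``tame if and only if generically tame'' in \cite{CB}) and does not follow from the formula $\mu_A(n)=\sum_{d\mid n}g_A(d)$ alone. Once that statement is added to your list of inputs, all three equivalences follow by the bookkeeping you describe.
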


Moreover, the following result on endofinite decomposability is proved in \cite[Section~4.4.3]{Pr}.

\begin{theorem}\label{thm_prest}
If $M$ is an endofinite module, then there are pairwise non-isomorphic indecomposable endofinite modules $M_1,\ldots ,M_n$, and positive integers $k_1,\ldots ,k_n$ such that
\begin{align*}
M\simeq \bigoplus_{i=1}^n M_i^{\oplus k_i}
\end{align*}
and
\begin{align*}
\mathrm{end.len} (M) = \sum_{i=1}^n \mathrm{end.len} (M_i).
\end{align*}
\end{theorem}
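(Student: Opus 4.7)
The plan is to derive the decomposition by analysing the endomorphism ring $E=\mathrm{End}_A(M)$ and then tracking the $E$-length across the summands. The starting point is the hypothesis that $M$ has finite length as a left $E$-module, which should force $E$ to be semiperfect, and hence to admit a Krull-Schmidt type decomposition for $M$. First, I would note that for every $f\in E$ the chains $\ker f\subseteq \ker f^2\subseteq\cdots$ and $\mathrm{im}\,f\supseteq\mathrm{im}\,f^2\supseteq\cdots$ stabilise, so Fitting's lemma yields a decomposition $M=\ker f^n\oplus \mathrm{im}\,f^n$ on which $f$ is respectively nilpotent and an automorphism. It follows that the Jacobson radical $J(E)$ coincides with the ideal of those $f$ that act nilpotently on $M$, and a further structural argument (as in the work of Crawley-Boevey cited via \cite{CB}) shows that $E/J(E)$ is semisimple Artinian with idempotents lifting; equivalently, $E$ is semiperfect.

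Second, I would exploit semiperfectness to write $1_E=f_1+\cdots +f_m$ as a sum of pairwise orthogonal primitive idempotents, inducing the $A$-module decomposition $M=\bigoplus_{j=1}^m f_jM$. Each summand $f_jM$ is indecomposable because $\mathrm{End}_A(f_jM)\simeq f_jEf_j$ is local, so any idempotent of $\mathrm{End}_A(f_jM)$ is trivial. Grouping isomorphic summands delivers the asserted form $M\simeq\bigoplus_{i=1}^n M_i^{\oplus k_i}$ with the $M_i$ pairwise non-isomorphic, and each $M_i$ is endofinite as a direct summand of an endofinite module (the chain conditions on $E$-submodules pass to $\mathrm{End}_A(M_i)$-submodules via the natural inclusion of endomorphism rings).

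Third, I would prove the endolength identity. The composition series of ${}_EM$ can be refined to one compatible with the direct summand decomposition, and a Morita-style identification shows that each isotypic block $M_i^{\oplus k_i}$ contributes its own endolength $\mathrm{end.len}(M_i)$: indeed, $\mathrm{End}_A(M_i^{\oplus k_i})$ is the matrix ring $M_{k_i}(\mathrm{End}_A(M_i))$, which is Morita equivalent to the local ring $\mathrm{End}_A(M_i)$, and this equivalence preserves composition length. Summing across $i$ yields the claimed formula. The principal obstacle is the first step: while Fitting's lemma quickly produces local endomorphism rings of indecomposable endofinite modules, proving that $E/J(E)$ is actually semisimple for an arbitrary endofinite $M$ requires the more delicate structural analysis carried out by Crawley-Boevey and Prest, and this is where I would lean most heavily on the cited references rather than a fresh argument.
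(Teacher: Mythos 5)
First, a point of comparison: the paper does not prove this statement at all --- it is quoted from the literature, with the sentence immediately preceding it saying that it ``is proved in [Pr, Section~4.4.3]''. So there is no in-paper argument to measure yours against; what can be assessed is whether your sketch would actually yield a proof, and there I see two genuine gaps, located exactly at the two places you flag as delicate.

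The first gap is the Fitting/semiperfectness step. The chains $\ker f\subseteq\ker f^2\subseteq\cdots$ and $\mathrm{im}\,f\supseteq \mathrm{im}\,f^2\supseteq\cdots$ are chains of $A$-submodules, not of $E$-submodules (neither $\ker f$ nor $f(M)$ is stable under an arbitrary $g\in E$), so finiteness of the length of ${}_EM$ imposes no chain condition on them; and since the modules this theorem is applied to (generic modules) have infinite length over $A$, there is no reason for these chains to stabilise. More seriously, endofiniteness does \emph{not} imply that $E$ is semiperfect: for $A=\Bbbk$ and $M=\Bbbk^{(\aleph_0)}$ the module ${}_EM$ is simple, so $\mathrm{end.len}(M)=1$, yet $E=E/J(E)$ contains infinite families of orthogonal idempotents and is not semisimple Artinian. (The same example shows the multiplicities can be infinite cardinals, which is how Prest states the result; your route, which writes $1_E$ as a finite sum of primitive idempotents, can only ever produce finite decompositions.) The argument in the cited sources instead passes through $\Sigma$-pure-injectivity of endofinite modules and the Zimmermann--Huisgen--Zimmermann decomposition theorem, so this step is not a detail one can outsource while keeping your architecture. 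The second gap is the endolength identity: when $\mathrm{Hom}_A(M_i,M_j)\neq 0$ for $i\neq j$, the isotypic components $M_i^{\oplus k_i}$ are not $E$-submodules of $M$, so a composition series of ${}_EM$ cannot be refined ``compatibly with the decomposition''. Your Morita argument correctly gives $\mathrm{end.len}(M_i^{\oplus k_i})=\mathrm{end.len}(M_i)$ for a single isotypic block, but not the passage from the blocks to $M$; the standard proof computes the endolength as the length of the lattice of pp-definable subgroups and uses $\phi\bigl(\bigoplus_i M_i^{\oplus k_i}\bigr)=\bigoplus_i\phi(M_i)^{\oplus k_i}$, which is where the pairwise non-isomorphism of the $M_i$ actually enters. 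In short, the two steps you defer to the references are not technical refinements of your outline but replacements for it.
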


Finally, we have the following analogue of \cite[Lemma~5.1]{Pe} about the endolengths of  tensor products.

\begin{lemma}\label{lemma_endlen}
Let $M$ be an $A$-$B$-bimodule such that ${}_AM$ is finitely generated. If $X_A$ is an endofinite module, then there is a constant $c_M$ such that
\begin{align*}
\mathrm{end.len} (X\otimes_A M)\leq c_M \cdot \mathrm{end.len} (X).
\end{align*}
\end{lemma}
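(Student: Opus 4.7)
The plan is to reduce the bound on $\mathrm{end.len}(X\otimes_A M)$, which is a length over the ring $\mathrm{End}_B(X\otimes_A M)$, to a bound on the length of $X\otimes_A M$ over the smaller ring $\mathrm{End}_A(X)$; the latter is easy to control using the hypothesis that ${}_AM$ is finitely generated. First I would observe that each $f\in \mathrm{End}_A(X)$ extends to a right $B$-module endomorphism $f\otimes \mathrm{id}_M$ of $X\otimes_A M$, and that the assignment $f\mapsto f\otimes \mathrm{id}_M$ is a ring homomorphism $\mathrm{End}_A(X)\to \mathrm{End}_B(X\otimes_A M)$. Through this map, every $\mathrm{End}_B(X\otimes_A M)$-submodule of $X\otimes_A M$ is automatically an $\mathrm{End}_A(X)$-submodule, and so
\[
\mathrm{end.len}(X\otimes_A M)\;\leq\; \mathrm{length}_{\mathrm{End}_A(X)}(X\otimes_A M).
\]

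For the bound on the right-hand side, I would use the finite generation of ${}_AM$ to fix a surjection $\pi\colon {}_AA^n\twoheadrightarrow {}_AM$, with $n$ the minimal number of generators of ${}_AM$. Applying $X\otimes_A -$ then produces a surjection
\[
\mathrm{id}_X\otimes \pi\colon X^n\;\cong\; X\otimes_A A^n\;\twoheadrightarrow\; X\otimes_A M,
\]
which is $\mathrm{End}_A(X)$-equivariant because the $\mathrm{End}_A(X)$-action touches only the first tensor factor. The diagonal $\mathrm{End}_A(X)$-action on $X^n$ makes it a direct sum of $n$ copies of $X$, so $\mathrm{length}_{\mathrm{End}_A(X)}(X^n)=n\cdot \mathrm{end.len}(X)$, and therefore
\[
\mathrm{length}_{\mathrm{End}_A(X)}(X\otimes_A M)\;\leq\; n\cdot \mathrm{end.len}(X).
\]
Combining the two displayed inequalities, one can take $c_M:=n$.

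The argument is essentially routine, and I do not anticipate a real obstacle: the only verifications needed are that $f\otimes \mathrm{id}_M$ is a well-defined right $B$-linear map (immediate from $f$ being right $A$-linear together with the bifunctoriality of $\otimes_A$) and that the canonical isomorphism $X\otimes_A A^n\cong X^n$ intertwines the action of $\mathrm{End}_A(X)$ on the first tensor factor with the diagonal action on $X^n$. The strategy is essentially the one used for the analogous statement in \cite{Pe}, where the source bimodule was left projective; here only finite generation on the left is needed, since we only pass to a surjection rather than an isomorphism at the $X^n$ step.
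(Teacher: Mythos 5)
Your proposal is correct and follows essentially the same route as the paper's own proof: a surjection $A^n\twoheadrightarrow {}_AM$ gives an $\mathrm{End}_A(X)$-equivariant surjection $X^n\twoheadrightarrow X\otimes_A M$, and restriction of scalars along the ring homomorphism $\mathrm{End}_A(X)\to\mathrm{End}_B(X\otimes_A M)$, $f\mapsto f\otimes\mathrm{id}_M$, bounds the endolength by $n\cdot\mathrm{end.len}(X)$. No substantive differences.
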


\begin{proof}
Since $M$ is finitely generated as left $A$-module, for some $n$ there is an epimorphism $A^n\to M$. Tensoring with $X$ yields an epimorphism $X^n\to X\otimes_A M$. Therefore the length of $X\otimes_A M$ as an
$\mathrm{End}_A(X)$-module is at most $n\cdot \mathrm{end.len}(X)$.
Now, we have an algebra morphism
\begin{align*}
\mathrm{End}_A(X) &\to \mathrm{End}_B(X\otimes_AM)\\
\varphi \mapsto &\varphi\otimes_A \mathrm{id}_M.
\end{align*}
Hence, any $\mathrm{End}_B(X\otimes_A M)$-module is also an $\mathrm{End}_A(X)$-module. Consequently, any chain of $\mathrm{End}_B(X\otimes_A M)$-submodules
\begin{displaymath}
0=U_0 \subsetneq U_1\subsetneq \ldots \subsetneq U_{r-1} \subsetneq U_r=X\otimes_A M
\end{displaymath}
is also a chain of $\mathrm{End}_A(X)$-modules, implying that $r\leq n\cdot \mathrm{end.len}(X)$. Hence we can take $c_M=n$ and the result follows.
\end{proof}

\subsubsection{Two-sided equivalence preserves representation type}

\begin{theorem}\label{thm_reptype}
Let $\Bbbk$ be an algebraically closed field, and let $A$ and $B$ be $\Bbbk$-algebras such that $A\leq_J B$. Then the following holds.
\begin{enumerate}[(i)]
\item If $A$ is of finite representation type, then $B$ is of finite representation type.
\item If $A$ is of tame representation type, then $B$ is of tame or finite representation type.
\end{enumerate}
In particular, if $A\sim_J B$, then $A$ and $B$ have the same representation type.
\end{theorem}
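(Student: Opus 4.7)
The plan is to follow the strategy of Peacock's proof of \cite[Theorem~6]{Pe} for separable equivalence and observe that, crucially, Lemma~\ref{lemma_endlen} requires no projectivity hypothesis on the bridging bimodules, so the same argument passes through for $J$-equivalence. Fix bimodules ${}_BM_A$ and ${}_AN_B$ witnessing $A\leq_J B$, so that $M\otimes_A N\simeq B\oplus X$ as $B$-$B$-bimodules. For any right $B$-module $G$, tensoring this splitting on the left with $G$ exhibits $G$ as a direct summand of $(G\otimes_B M)\otimes_A N$. Since $M$ and $N$ are finite-dimensional, Lemma~\ref{lemma_endlen} applied twice shows that if $G$ is endofinite with endolength $d$, then $G\otimes_B M$ is endofinite of endolength at most $c_M d$ over $A$, and, for each indecomposable endofinite summand $H$ of $G\otimes_B M$, the module $H\otimes_A N$ is endofinite of endolength at most $c_N c_M d$ over $B$.

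Decomposing $G\otimes_B M$ via Theorem~\ref{thm_prest} as $\bigoplus_i H_i^{\oplus k_i}$ with pairwise non-isomorphic indecomposable endofinite $H_i$, and using uniqueness of the endofinite decomposition applied to $(G\otimes_B M)\otimes_A N$, one sees that if $G$ is indecomposable endofinite then $G$ must be isomorphic to one of the indecomposable endofinite summands of some $H_i\otimes_A N$. This reduces the classification of generic $B$-modules to a controlled piece of the generic spectrum of $A$. For (i), if $A$ is of finite representation type then $g_A(d)=0$ for all $d$, so every indecomposable endofinite $A$-module has finite length, hence is finite-dimensional. Each $H_i\otimes_A N$ is then finite-dimensional over $\Bbbk$, so any generic $B$-module $G$ (which by definition has infinite $B$-length) cannot appear as a summand. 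Therefore $g_B(d)=0$ for all $d$, and $B$ has finite representation type.

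For (ii), I would argue by contradiction: suppose $g_B(d_0)=\infty$ for some $d_0$, so that $B$ admits infinitely many pairwise non-isomorphic generic modules $G_j$ of endolength $d_0$. By the reduction above, each $G_j$ embeds as an indecomposable endofinite summand of $H\otimes_A N$ for some indecomposable endofinite $A$-module $H$ with endolength at most $c_M d_0$. Such $H$ must itself be generic, for otherwise $H\otimes_A N$ would be finite-dimensional and could not contain an infinite-dimensional summand. Since $A$ is tame, $g_A(d)<\infty$ for every $d\leq c_M d_0$, so there are only finitely many candidates for $H$; and for each such $H$, the module $H\otimes_A N$ has endolength at most $c_N c_M d_0$, so by Theorem~\ref{thm_prest} decomposes into only finitely many pairwise non-isomorphic indecomposable endofinite summands. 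Altogether this yields only finitely many isomorphism classes of possible $G_j$, contradicting $g_B(d_0)=\infty$. Hence $g_B(d)<\infty$ for every $d$, so $B$ is of tame or finite representation type. The main obstacle is the bookkeeping around Theorem~\ref{thm_prest}: one must carefully invoke Krull--Schmidt-type uniqueness for endofinite modules, in order to legitimately identify $G$ with one of the finitely many indecomposable summands obtained from the $H_i$ rather than with some further piece of an infinite sum.
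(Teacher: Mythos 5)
Your proposal is correct and follows essentially the same route as the paper: both bound the endolength of the transported module via Lemma~\ref{lemma_endlen}, decompose it using Theorem~\ref{thm_prest}, and conclude that every generic $B$-module must occur as a summand of $G\otimes_A M$ for one of the finitely many generic $A$-modules $G$ of bounded endolength (the finite-length part being discarded because a generic module cannot be a summand of a finite-dimensional one). The only point you leave implicit is the final ``in particular'' clause, which the paper dispatches via Drozd's trichotomy theorem, and your explicit remark about Krull--Schmidt-type uniqueness for endofinite decompositions is a detail the paper glosses over.
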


\begin{proof}
Let ${}_AM_B$ and ${}_BN_A$ be finite-dimensional bimodules  such that $N\otimes_A M \simeq B\oplus Y$ as $B$-$B$-bimodules.

Assume that $A$ is of tame of finite representation type. For each $d$, denote the generic right $A$-modules by ${}^dG_1,\ldots ,{}^dG_{g_A(d)}$.

Let $H$ be an indecomposable generic right $B$-module of endolength $d$. By Lemma~\ref{lemma_endlen}, $H\otimes_B N$ has endolength as most $c_N d$. Then, by Theorem~\ref{thm_prest},
\begin{displaymath}
H\otimes_B N \simeq \bigoplus_{j=1}^{c_Nd} \bigoplus_{i=1}^{g_A(j)} {}^j G_i^{\oplus k_{ij}} \oplus F,
\end{displaymath}
where all $k_{ij}$ are nonnegative integers and $F_A$ is a finite length module.

Since $B$ is a direct summand  of $N\otimes_A M$, it follows that $H$ is a direct summand of $H\otimes_B N\otimes_A M$.
Since $H$ is generic, it has infinite length as $B$-module. Therefore $H$ is not a direct summand of $F\otimes_A M$.
Thus $H$ is a direct summand of some ${}^jG_i\otimes_A M$ with $1\leq j\leq c_Nd$ and $1\leq i\leq g_A(j)$.

What we have shown to this point is that any generic right $B$-module $H$ of endolength $d$ is a direct summand of some $G\otimes_A M$, where $G$ is a generic $A$-module of endolength at most $c_Nd$.
Since $A$ is of finite or tame representation type, the number of generic $A$-modules of each endolength is finite, so the number of generic $B$-modules of each endolength must be finite as well. Moreover, if $A$ is of finite representation type, then there are no generic $A$-modules at all, implying that there are no generic $B$-modules either. This proves both $(i)$ and $(ii)$.

By Drozd's trichotomy theorem, see \cite{Dr}, it also follows that if $A$ is of wild representation type, then so is $B$. This concludes the proof.
\end{proof}

\subsubsection{Tame type}
Tame representation type can be subdivided into different subtypes, given by certain bounds on the number of indecomposables of each finite dimension.

Let $A$ be an algebra of tame representation type. Then $A$ is called  domestic if there is some $N\in \mathbb{N}$ such that $\mu_A(n)\leq N$, for all $n\in \mathbb{N}$.

We say that $A$ is of polynomial growth if there are positive integers $C$ and $d$ such that $\mu_A(n)\leq Cn^d$, for all $n\in \mathbb{N}$.

The following  is a straightforward generalization of \cite[Theorem~7]{Pe}, 
and we refer to \cite{Pe} for the proof.

\begin{theorem}\label{thm_domestic}
Let $\Bbbk$ be an algebraically closed field, and let $A$ and $B$ be finite-dimensional $\Bbbk$-algebras of tame representation type such that $A\sim_J B$. Then the following holds.
\begin{enumerate}[(i)]
\item If $A$ is domestic, then $B$ is domestic.
\item If $A$ is of polynomial growth, then $B$ is of polynomial growth.
\end{enumerate}
\end{theorem}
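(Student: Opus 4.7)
The plan is to refine the analysis already present in the proof of Theorem~\ref{thm_reptype} into a quantitative bound on $g_B(d)$ in terms of the $g_A(e)$'s, and then convert this into information about $\mu_B$ via Theorem~\ref{thm_cb}.

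First I would recall from the proof of Theorem~\ref{thm_reptype} that every indecomposable generic right $B$-module $H$ of endolength $d$ appears as a direct summand of $G\otimes_A M$ for some indecomposable generic right $A$-module $G$ of endolength at most $c_N d$, with $c_N$ the constant supplied by Lemma~\ref{lemma_endlen}. Applying Lemma~\ref{lemma_endlen} a second time, $G\otimes_A M$ has endolength at most $c_M c_N d$, so by Theorem~\ref{thm_prest} it has at most $c_M c_N d$ pairwise non-isomorphic indecomposable summands. This yields the uniform estimate
\begin{displaymath}
g_B(d)\leq c_M c_N d\cdot \sum_{e=1}^{c_N d} g_A(e).
\end{displaymath}

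For part $(i)$, suppose $A$ is domestic with $\mu_A(n)\leq N_0$ for all $n$. I would first extract the stronger statement that $A$ admits only finitely many generic modules in total: if $e_1<e_2<\cdots$ were infinitely many endolengths with $g_A(e_i)>0$, then setting $n=e_1 e_2\cdots e_k$ would give $\mu_A(n)\geq k$, contradicting the uniform bound. Let $G_1,\ldots,G_s$ enumerate all generic $A$-modules. By the displayed estimate, every generic $B$-module is a summand of some $G_i\otimes_A M$, and each of these tensor products has finitely many indecomposable summands by Theorem~\ref{thm_prest}; hence $B$ admits only finitely many generics in total, so $\mu_B(n)$ is bounded by this total for all $n$, and $B$ is domestic.

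For part $(ii)$, suppose $\mu_A(n)\leq Cn^{d_0}$. Since $g_A(e)\leq \mu_A(e)\leq Ce^{d_0}$, the inner sum in the displayed estimate is bounded polynomially in $d$, giving $g_B(d)\leq C' d^{d_0+2}$ for some constant $C'$. Then $\mu_B(n)=\sum_{d\mid n}g_B(d)\leq C'n^{d_0+2}\tau(n)$, and since $\tau(n)=O(n^\varepsilon)$ for any $\varepsilon>0$, this grows polynomially in $n$, so $B$ is of polynomial growth. The only subtle point is the implication in part $(i)$ that a uniform bound on $\mu_A(n)$ forces finiteness of the total number of generic $A$-modules; once that is in hand, everything reduces to the bookkeeping above, exactly along the lines of \cite{Pe}.
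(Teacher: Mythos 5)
Your argument is correct, and it is essentially the intended proof: the paper itself omits the argument and defers to \cite[Theorem~7]{Pe}, whose proof proceeds exactly via the bound $g_B(d)\leq c_Mc_Nd\sum_{e\leq c_Nd}g_A(e)$ extracted from the generic-module analysis of Theorem~\ref{thm_reptype}, combined with Theorem~\ref{thm_cb}. Your two supplementary observations --- that a uniform bound on $\mu_A$ forces finitely many generic $A$-modules in total (via the divisor trick), and that $g_A(e)\leq\mu_A(e)$ --- are exactly the points needed to close the argument, so nothing is missing.
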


\subsection{Tensor algebras}
The following result is stated and proved for separable division in \cite[Section~4]{Pe}. It generalizes easily to the two-sided inequality.

\begin{proposition}\label{prop_otimesC}
Let $A$ and $B$ be $\Bbbk$-algebras such that $A\leq_J B$. Then, for any $\Bbbk$-algebra $C$, we have $A\otimes_\Bbbk C\leq_J B\otimes_\Bbbk C$ and $C\otimes_\Bbbk A\leq_J C\otimes_\Bbbk B$.
\end{proposition}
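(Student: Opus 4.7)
The plan is to take the bimodules witnessing $A\leq_J B$ and extend scalars by $C$ on the outside, then use a compatibility between tensor over $\Bbbk$ and tensor over an algebra to transport the direct summand. By hypothesis, there exist bimodules ${}_BM_A$ and ${}_AN_B$ together with a $B$-$B$-bimodule $X$ such that $M\otimes_A N\simeq B\oplus X$. I would form $\widetilde M:=M\otimes_\Bbbk C$, viewed as a $(B\otimes_\Bbbk C)$-$(A\otimes_\Bbbk C)$-bimodule through $(b\otimes c)(m\otimes c'')(a\otimes c')=bma\otimes cc''c'$, and symmetrically $\widetilde N:=N\otimes_\Bbbk C$ as an $(A\otimes_\Bbbk C)$-$(B\otimes_\Bbbk C)$-bimodule.

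The central step is the natural isomorphism
\begin{equation*}
(M\otimes_\Bbbk C)\otimes_{A\otimes_\Bbbk C}(N\otimes_\Bbbk C)\;\simeq\;(M\otimes_A N)\otimes_\Bbbk C
\end{equation*}
of $(B\otimes_\Bbbk C)$-$(B\otimes_\Bbbk C)$-bimodules, given on simple tensors by $(m\otimes c)\otimes(n\otimes c')\mapsto(m\otimes_A n)\otimes cc'$. Well-definedness rests on the fact that the relation $(m(a\otimes 1))\otimes(n)=m\otimes((a\otimes 1)n)$ in the tensor over $A\otimes_\Bbbk C$ suffices to identify $ma\otimes_A n$ with $m\otimes_A an$; an inverse is built from $(m\otimes_A n)\otimes c\mapsto(m\otimes 1)\otimes(n\otimes c)$. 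Checking bilinearity over $A\otimes_\Bbbk C$ and $(B\otimes_\Bbbk C)$-bilinearity on the outside is straightforward once one notes that the $C$-factor simply accumulates in the $C$-slot.

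Granting this isomorphism, we compute
\begin{equation*}
\widetilde M\otimes_{A\otimes_\Bbbk C}\widetilde N\;\simeq\;(M\otimes_A N)\otimes_\Bbbk C\;\simeq\;(B\oplus X)\otimes_\Bbbk C\;\simeq\;(B\otimes_\Bbbk C)\oplus(X\otimes_\Bbbk C),
\end{equation*}
exhibiting $B\otimes_\Bbbk C$ as a direct summand of $\widetilde M\otimes_{A\otimes_\Bbbk C}\widetilde N$, which proves $A\otimes_\Bbbk C\leq_J B\otimes_\Bbbk C$. The case $C\otimes_\Bbbk A\leq_J C\otimes_\Bbbk B$ follows by the same argument with the roles of left/right $C$-factors swapped, or alternatively via the canonical isomorphism $A\otimes_\Bbbk C\simeq C\otimes_\Bbbk A$ of algebras.

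The only place that requires any care is the bimodule isomorphism in the display above; this is the standard interaction between tensor products over different rings, but one should be explicit that the two bimodule structures (via the first $B\otimes_\Bbbk C$ and via the second) match on both sides of the claimed isomorphism. There is no conceptual obstacle here, merely bookkeeping, so the proof is genuinely short.
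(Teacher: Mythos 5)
Your proof is correct and follows essentially the same route as the paper: extend both witnessing bimodules by $-\otimes_\Bbbk C$, use the canonical isomorphism $(M\otimes_\Bbbk C)\otimes_{A\otimes_\Bbbk C}(N\otimes_\Bbbk C)\simeq (M\otimes_A N)\otimes_\Bbbk C$, and read off $B\otimes_\Bbbk C$ as a direct summand. The only differences are notational (the paper names the bimodules in the opposite order), so there is nothing further to add.
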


\begin{proof}
We prove the first inequality, the second is analoguous.
Let ${}_AM_B$ and ${}_BN_A$ be such that $N\otimes_A M\simeq B\oplus Y$ as $B$-$B$-bimodules.
Then $N\otimes_\Bbbk C$ is a $B\otimes_\Bbbk C$-$A\otimes_\Bbbk C$-bimodule via
\begin{align*}
(b\otimes c')(n\otimes c)(a\otimes c'')=(bna)\otimes (c'cc'').
\end{align*}
Similarly $M\otimes C$ is an $A\otimes_\Bbbk C$-$B\otimes_\Bbbk C$-bimodule. We have
\begin{align*}
(N\otimes_\Bbbk C) \otimes_{A\otimes C} (M\otimes_\Bbbk C)\simeq (N\otimes_A M )\otimes_\Bbbk C \simeq (B\oplus Y) \otimes_\Bbbk C
\end{align*}
so $B\otimes_\Bbbk C$ is a direct summand of $(N\otimes_\Bbbk C) \otimes_{A\otimes C} (M\otimes_\Bbbk C)$.
\end{proof}

The following corollary of Theorem~\ref{thm_reptype} and Proposition~\ref{prop_otimesC} will be used many times to establish inequivalence of algebras in the proceeding section.

\begin{corollary}\label{cor_enveloping_reptype}
If $A$ and $B$ are $J$-equivalent $\Bbbk$-algebras, then $A^\mathrm{op}\sim_J B^\mathrm{op}$. Moreover, $A\otimes_\Bbbk A^\mathrm{op}$ and $B\otimes_\Bbbk B^\mathrm{op}$ are also $J$-equivalent, and, if the field is algeraically closed, have the same representation type.
\end{corollary}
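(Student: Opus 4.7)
The plan is to reduce everything to formal bookkeeping plus direct application of Proposition~\ref{prop_otimesC} and Theorem~\ref{thm_reptype}. For the first claim, the key observation is that an $A$-$B$-bimodule $M$ can be regarded as a $B^{\mathrm{op}}$-$A^{\mathrm{op}}$-bimodule by swapping left and right actions; under this identification the functor $-\otimes_B-$ becomes $-\otimes_{B^{\mathrm{op}}}-$ with the tensor factors written in reverse order. Thus, starting from the hypothesis that there exist bimodules ${}_AM_B$ and ${}_BN_A$ with ${}_AA_A\mid M\otimes_B N$, I would view $N$ as an $A^{\mathrm{op}}$-$B^{\mathrm{op}}$-bimodule and $M$ as a $B^{\mathrm{op}}$-$A^{\mathrm{op}}$-bimodule, and note that the isomorphism $M\otimes_B N\simeq A\oplus X$ of $A$-$A$-bimodules is literally the same statement as $N\otimes_{B^{\mathrm{op}}} M\simeq A^{\mathrm{op}}\oplus X$ of $A^{\mathrm{op}}$-$A^{\mathrm{op}}$-bimodules. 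Hence $A^{\mathrm{op}}\leq_J B^{\mathrm{op}}$, and by symmetry $A^{\mathrm{op}}\sim_J B^{\mathrm{op}}$.

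For the second claim, I would apply Proposition~\ref{prop_otimesC} twice. The first application, to $A\leq_J B$ tensored on the right by $A^{\mathrm{op}}$, gives $A\otimes_\Bbbk A^{\mathrm{op}}\leq_J B\otimes_\Bbbk A^{\mathrm{op}}$. The second application, to $A^{\mathrm{op}}\leq_J B^{\mathrm{op}}$ (from the first part) tensored on the left by $B$, gives $B\otimes_\Bbbk A^{\mathrm{op}}\leq_J B\otimes_\Bbbk B^{\mathrm{op}}$. Composing yields one direction; the reverse inequality is obtained identically by interchanging the roles of $A$ and $B$. Thus $A\otimes_\Bbbk A^{\mathrm{op}}\sim_J B\otimes_\Bbbk B^{\mathrm{op}}$.

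Finally, the representation-type assertion follows immediately: Theorem~\ref{thm_reptype} states that $J$-equivalent algebras over an algebraically closed field share their representation type, so we simply invoke it for $A\otimes_\Bbbk A^{\mathrm{op}}$ and $B\otimes_\Bbbk B^{\mathrm{op}}$. No substantial obstacle is expected; the only point requiring care is the opposite-algebra/bimodule dictionary in step one, which needs to be stated precisely so that the isomorphism of $A$-$A$-bimodules can be reinterpreted as an isomorphism of $A^{\mathrm{op}}$-$A^{\mathrm{op}}$-bimodules without sign or ordering errors.
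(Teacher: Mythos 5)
Your proposal is correct and follows essentially the same route as the paper: the opposite-algebra/bimodule dictionary giving $N\otimes_{B^{\mathrm{op}}}M\simeq (M\otimes_B N)^{\mathrm{op}}$ for the first claim, two applications of Proposition~\ref{prop_otimesC} through the intermediate algebra $B\otimes_\Bbbk A^{\mathrm{op}}$ for the second, and Theorem~\ref{thm_reptype} for the representation-type statement. The only cosmetic remark is that the complement in your reinterpreted isomorphism should be written $X^{\mathrm{op}}$ rather than $X$, but this does not affect the argument.
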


\begin{proof}
Assume that $A\sim_J B$.
First of all, we note that, if $A$ is a direct summand of ${}_AM\otimes_BN_A$, then $A^\mathrm{op}$ is a direct summand of
\begin{displaymath}
{}_{A^{\mathrm{op}}}N \otimes_{B^\mathrm{op}}M_{A^{\mathrm{op}}} \simeq ({}_AM\otimes_B N_A)^\mathrm{op}.
\end{displaymath}
Hence $A^\mathrm{op}\sim_J B^\mathrm{op}$.
Now, Proposition~\ref{prop_otimesC} gives that
\begin{displaymath}
A\otimes_\Bbbk A^\mathrm{op}\sim_J B\otimes_\Bbbk A^\mathrm{op} \sim_J B\otimes_\Bbbk B^\mathrm{op}.
\end{displaymath}
The rest follows from Theorem~\ref{thm_reptype}.
\end{proof}

\section{Exampels of (in)equivalence}\label{s_examples}

\subsection{Main examples}
Given a finite quiver $Q$, we denote by $R$ the ideal of $\Bbbk Q$ generated by the arrows of $Q$.

Denote by $\mathbb{A}_n$ the uniformly oriented Dynkin quiver of type $A$ on $n$ vertices:
\begin{displaymath}
1\to 2\to \ldots \to n-1 \to n .
\end{displaymath}
Set $A_n:=\Bbbk \mathbb{A}_n/R^2$.

For $n\geq 2$, let  $Q_n$ be the following quiver:
\begin{displaymath}
\xymatrix{
&n\ar[dl]\\
1\ar[r] & \ldots \ar[r] & n-1\ar[ul]
}
\end{displaymath}
For $k\geq 2$, set
\begin{align*}
\Lambda_n^{(k)}:=\Bbbk Q_n/R^k,
\end{align*}
so that $\Lambda_n^{(k)}$ is the path algebra of $Q_n$ modulo the 
relations that any path of length $k$ is zero.

Recall that $\Theta$ denotes the path algebra of the Kronecker quiver,
and $A_3'$ the  path algebra of the following quiver.
\begin{displaymath}
\xymatrix{1\ar[r] & 2 & 3\ar[l]}
\end{displaymath}

Our main examples of two-sided (in)equivalences are summarized here.

\begin{theorem}\label{thm_nakayama}
Let $\Bbbk$ be an algebraically closed field and $n\geq 2$ an integer such that $\mathrm{char}(\Bbbk) \nmid n$. Then the following holds.
\begin{enumerate}[(i)]
\item There is a chain of strict two-sided inequalities
\begin{displaymath}
\Bbbk >_J A_2 >_JA_3 >_J \ldots >_J \Bbbk [x]/ (x^2)>_J \Bbbk [x]/ (x^3)>_J \Bbbk [x]/ (x^4)>_J \Bbbk [x]/ (x^5).
\end{displaymath}
\item For all $k\geq 2$, we have that $\Lambda_n^{(k)}\sim_J \Bbbk [x]/(x^k)$.
\item $\Bbbk[x,y]/(x^2,xy,y^2)\leq_J \Theta <_J \Bbbk [x]/(x^2)$.
\item $\Theta <_J A_3'$. 
\end{enumerate}
\end{theorem}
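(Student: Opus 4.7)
The plan breaks the theorem into three kinds of claims: the $J$-equivalences in part (ii), the non-strict $\geq_J$-inequalities in part (i) coming from quotient maps, and the strict separations together with the comparisons involving $\Theta$ in parts (iii)--(iv).

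Part (ii) is a direct application of Theorem~\ref{thm_group} to the rotation action of $C_n$ on $\Lambda_n^{(k)}$. This action is free on the underlying quiver $Q_n$, and $\mathrm{char}(\Bbbk)\nmid n$ by hypothesis, so both statements of the theorem apply and yield $\Lambda_n^{(k)} \sim_J (\Lambda_n^{(k)})^{C_n}$. It then remains to identify the invariant subalgebra, and a direct computation with $u$ the sum of the arrows of $Q_n$ shows that $1, u, u^2, \ldots, u^{k-1}$ form a $\Bbbk$-basis for $(\Lambda_n^{(k)})^{C_n}$ with $u^k = 0$, giving an isomorphism $(\Lambda_n^{(k)})^{C_n} \cong \Bbbk[x]/(x^k)$.

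For the non-strict relations in part (i), Proposition~\ref{prop_surj} applies to the explicit surjections $A_{n+1} \twoheadrightarrow A_n$ (kill the idempotent at the last vertex and the arrow entering it) and $\Bbbk[x]/(x^{k+1}) \twoheadrightarrow \Bbbk[x]/(x^k)$; the inequality $A_n \geq_J \Bbbk[x]/(x^2)$ follows from the surjection $\Lambda_n^{(2)} \twoheadrightarrow A_n$ obtained by killing the cycle-closing arrow, combined with part (ii). The strict separation $\Bbbk[x]/(x^2) >_J \Bbbk[x]/(x^3)$ then follows from Corollary~\ref{cor_enveloping_reptype} and the classification of \cite{LS2}: the enveloping algebra of $\Bbbk[x]/(x^k)$ has wild representation type for $k \geq 3$ but tame or finite for $k = 2$, so these algebras cannot be $J$-equivalent.

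Parts (iii) and (iv) are handled together. One first exhibits explicit bimodules ${}_\Theta M_B$ and ${}_B N_\Theta$ with $B = \Bbbk[x]/(x^2)$, establishing $\Theta \leq_J \Bbbk[x]/(x^2)$; combined with part (i), this yields $\Theta \leq_J A_3$ by transitivity. Strictness in both cases is then immediate from Theorem~\ref{thm_reptype}, since either $\Bbbk[x]/(x^2) \leq_J \Theta$ or $A_3 \leq_J \Theta$ would force $\Theta$ to be of finite representation type, contradicting its tameness. The principal obstacles are the construction of the bimodules establishing $\Theta \leq_J \Bbbk[x]/(x^2)$ in part (iii), since natural candidates (such as those arising from subalgebra inclusions $\Bbbk[x]/(x^2) \hookrightarrow \Theta$ via $x \mapsto \alpha$, or skew group constructions) do not produce $\Theta$ as a direct summand of $M \otimes_B N$; and the strict inequalities within the $A_n$-family, where standard representation-type invariants (Theorem~\ref{thm_reptype} and Corollary~\ref{cor_enveloping_reptype}) cannot distinguish the algebras in question, so one must rely on more delicate structural arguments exploiting the radical square zero structure via constraints such as those provided by Lemma~\ref{lemma_top}.
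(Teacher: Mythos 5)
Your skeleton is right and large parts of it coincide with the paper: part (ii) is proved exactly as in Proposition~\ref{prop_lambda} (rotation action of $C_n$ on $\Lambda_n^{(k)}$, invariants spanned by the powers of the sum of the arrows); the non-strict inequalities in (i) come from Proposition~\ref{prop_surj} applied to the obvious surjections together with $\Lambda_n^{(2)}\twoheadrightarrow A_n$ (Lemma~\ref{lemma_DA}); and your strictness and transitivity arguments for (iii)--(iv) via Theorem~\ref{thm_reptype} are exactly what the paper does. However, three pieces are genuinely missing. First, you never separate $\Bbbk[x]/(x^3)$, $\Bbbk[x]/(x^4)$ and $\Bbbk[x]/(x^5)$: the enveloping-algebra invariant you invoke cannot do this, since all three enveloping algebras are wild. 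The paper instead applies $T_2(-)=-\otimes_\Bbbk\Bbbk\mathbb{A}_2$ (Lemma~\ref{lemma_Tn}) together with the Leszczy\'nski--Skowro\'nski classification: $T_2(\Bbbk[x]/(x^k))$ is of finite type for $k\leq 3$, tame for $k=4$ and wild for $k=5$, which is also why the chain in (i) stops at $x^5$. Second, for the strict inequalities $A_n>_JA_{n+1}$ you correctly point at Lemma~\ref{lemma_top} but do not give the argument; the paper's Proposition~\ref{prop_An} shows that $A_m\otimes_\Bbbk A_n^{\mathrm{op}}$ is special biserial of finite type and that every indecomposable $A_m$-$A_n$-bimodule $M$ satisfies $\dim\mathrm{top}(M)\leq\min\{m,n\}$, whence $A_m\not\geq_JA_n$ for $m>n$ by Lemma~\ref{lemma_top}. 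This step cannot be waved away, since no representation-type invariant distinguishes the algebras $A_n$ from one another.

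Third, and most seriously, your setup for $\Theta\leq_J\Bbbk[x]/(x^2)$ points in the wrong direction. With the paper's conventions, $\Theta\leq_J D$ (where $D=\Bbbk[x]/(x^2)$) means that $D$ is a direct summand of $M\otimes_\Theta N$ for some ${}_DM_\Theta$ and ${}_\Theta N_D$; you instead look for $\Theta$ as a direct summand of $M\otimes_D N$, which would prove $\Theta\geq_JD$ --- a statement that is false by part (iii) itself, and that is precisely why every candidate construction you try fails. The paper's Proposition~\ref{prop_kronecker} takes the $D$-$\Theta$-bimodule $M$ whose underlying Kronecker representation has dimension vector $(2,2)$, with $\alpha$ acting by $I_2$ and $\beta$ by the transposed Jordan block $J_2(1)^t$, the left $D$-action on each vertex space being the regular nilpotent, sets $N=\mathrm{Hom}_D(M,D)$, and verifies by direct computation with bases that $M\otimes_\Theta N\simeq D$ as $D$-$D$-bimodules. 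Until this construction (or an equivalent one, correctly oriented) is supplied, part (iii), and with it your derivation of part (iv), remains unproved.
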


The remainder of this section is dedicated to the proof of 
Theorem~\ref{thm_nakayama}, split into a number of special cases.

\subsection{Triangular matrix algebras}
For an algebra $A$, denote by $T_n(A)$ the matrix algebra of upper-triangular $n\times n$-matrices with coefficients in $A$. Then
\begin{displaymath}
T_n (A) \simeq A\otimes_\Bbbk \Bbbk \mathbb{A}_n.
\end{displaymath} 

The following is now a special case of Proposition~\ref{prop_otimesC}.
We will used it many times in this section.

\begin{lemma}\label{lemma_Tn}
Fix a positive integer $n$ and two $\Bbbk$-algebras $A$ and $B$.
If $A\leq_J B$, then $T_n(A)\leq_J T_n(B)$. In particular, if $A\sim_J B$, then $T_n(A)\sim_J T_n(B)$.
\end{lemma}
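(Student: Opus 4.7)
The plan is essentially to observe that this lemma is an immediate corollary of Proposition~\ref{prop_otimesC} combined with the isomorphism $T_n(A)\simeq A\otimes_\Bbbk \Bbbk\mathbb{A}_n$ that is displayed right before the lemma's statement. So the proof is a one-line unwinding of definitions, not a separate argument.

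In more detail, I would first take the given bimodules ${}_AM_B$ and ${}_BN_A$ witnessing $A\leq_J B$, so that ${}_AA_A$ is a direct summand of $M\otimes_B N$. Applying Proposition~\ref{prop_otimesC} with $C:=\Bbbk\mathbb{A}_n$, we obtain that $A\otimes_\Bbbk\Bbbk\mathbb{A}_n\leq_J B\otimes_\Bbbk\Bbbk\mathbb{A}_n$, with the inequality realized concretely by the bimodules $M\otimes_\Bbbk \Bbbk\mathbb{A}_n$ and $N\otimes_\Bbbk \Bbbk\mathbb{A}_n$ equipped with the obvious outer actions by $\Bbbk\mathbb{A}_n$. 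Rewriting both sides via the identification $T_n(R)\simeq R\otimes_\Bbbk \Bbbk\mathbb{A}_n$ (valid for any algebra $R$) gives $T_n(A)\leq_J T_n(B)$.

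The second assertion then follows formally: if $A\sim_J B$, we have both $A\leq_J B$ and $B\leq_J A$, so applying the first assertion in each direction yields $T_n(A)\leq_J T_n(B)$ and $T_n(B)\leq_J T_n(A)$, i.e. $T_n(A)\sim_J T_n(B)$. There is no real obstacle here; the only thing worth double-checking is that Proposition~\ref{prop_otimesC} is stated for arbitrary $\Bbbk$-algebras $C$ (it is) so that the path algebra $\Bbbk\mathbb{A}_n$ is an admissible choice.
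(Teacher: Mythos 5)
Your proof is correct and is exactly the paper's argument: the lemma is stated there as an immediate special case of Proposition~\ref{prop_otimesC} with $C=\Bbbk\mathbb{A}_n$, using the identification $T_n(R)\simeq R\otimes_\Bbbk\Bbbk\mathbb{A}_n$. Nothing further is needed.
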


\subsection{Self-injective Nakayama algebras}

First, we prove Theorem~\ref{thm_nakayama}$(ii)$. Indeed, we have an even stronger statement.

\begin{proposition}\label{prop_lambda}
Let $\Bbbk$ be a field and $n\geq 2$ an integer such that $\mathrm{char}(\Bbbk) \nmid n$. 
For each $k\geq 2$, the algebra $\Bbbk [x]/(x^k)$ is symmetrically separably equivalent to $\Lambda_n^{(k)}$.
\end{proposition}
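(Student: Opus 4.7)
The plan is to apply Theorem~\ref{thm_group}$(iii)$ to $A=\Lambda_n^{(k)}$ together with the action of the cyclic group $C_n=\langle g\rangle$ obtained by rotating the quiver $Q_n$, i.e.\ $g\cdot e_i=e_{i+1}$ and $g\cdot\alpha_i=\alpha_{i+1}$ (indices mod $n$). This preserves the relation $R^k=0$ and extends uniquely to an algebra automorphism of $A$. Since the induced action on vertices (and hence on the underlying quiver) is free and $A$ is basic, parts $(i)$ and $(ii)$ of Theorem~\ref{thm_group} already yield a separable equivalence; the whole task is to upgrade this to a \emph{symmetric} separable equivalence by verifying the two additional hypotheses in part $(iii)$.

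First, I would identify the invariant subalgebra explicitly. Setting $x_j:=\sum_{i=1}^{n}\alpha_{i+j-1}\cdots\alpha_{i+1}\alpha_i$ for $1\leq j<k$ and $x_0:=1$, these elements are $C_n$-invariant and, since $\operatorname{char}\Bbbk\nmid n$, a dimension count via the orbit decomposition of the monomial basis shows that they span $A^{C_n}$. A direct composition computation in the cyclic quiver gives $x_1^j=x_j$ and $x_1^k=0$, so $x\mapsto x_1$ defines an isomorphism $\Bbbk[x]/(x^k)\xrightarrow{\sim}A^{C_n}$. The target is commutative Frobenius and therefore symmetric, so the symmetry hypothesis of Theorem~\ref{thm_group}$(iii)$ is satisfied.

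The remaining, and I expect main, step is the hypothesis $\mathrm{Hom}_\Bbbk(A,\Bbbk)\cong{}_AA^{h}_A$ for some $h\in C_n$. This amounts to showing that the Nakayama automorphism of the self-injective Nakayama algebra $\Lambda_n^{(k)}$ is implemented by a rotation of $Q_n$. From the structure of $P(i)=Ae_i$, whose radical filtration has composition factors $S_i,S_{i+1},\ldots,S_{i+k-1}$ from top to bottom, one reads off $\mathrm{soc}\,P(i)\cong S_{i+k-1}$, so the Nakayama permutation is the rotation $i\mapsto i+k-1$, which lies in $C_n$. I would then upgrade this permutation-level statement to the bimodule isomorphism $\mathrm{Hom}_\Bbbk(A,\Bbbk)\cong {}_AA^{g^{k-1}}_A$ by constructing the basis of $D(A)$ dual to the monomial basis $\{p_{i,j}\}$ of $A$ and checking that the right action twisted by $g^{k-1}$ sends this dual basis to the monomial basis of $A$ up to nonzero scalars, using only the identity $p_{i,j}\,p_{i',j'}=\delta_{i,i'+j'}\,p_{i',j+j'}$ (interpreted as $0$ when $j+j'\geq k$).

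With these three facts in hand, all hypotheses of Theorem~\ref{thm_group}$(iii)$ are met, and the theorem delivers the symmetric separable equivalence between $\Lambda_n^{(k)}$ and $A^{C_n}\cong\Bbbk[x]/(x^k)$. The principal obstacle is the bimodule identification at the last step rather than the Nakayama-permutation calculation itself: one has to verify compatibility with both the (untwisted) left and the twisted right action simultaneously, which requires a careful but routine check on the basis above.
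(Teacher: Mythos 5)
Your proposal is correct and follows essentially the same route as the paper: the same rotation action of $C_n$ on $Q_n$, the same identification of the invariants with $\Bbbk[x]/(x^k)$ via sums of paths of equal length, and the same application of Theorem~\ref{thm_group}$(iii)$ using the isomorphism $\mathrm{Hom}_\Bbbk(\Lambda_n^{(k)},\Bbbk)\simeq (\Lambda_n^{(k)})^{c^{k-1}}$. The only difference is one of detail: the paper simply asserts this last bimodule isomorphism, whereas you outline the dual-basis verification, which is a reasonable elaboration rather than a deviation.
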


\begin{proof}
Fix $n$ and $k$. Consider the cyclic group $C_n$ of order $n$, and fix a generator $c\in C_n$. Let $C_n$ act on $\Lambda_n^{(k)}$ by counter-clockwise rotation, i.e.
\begin{align*}
\begin{cases}
c\varepsilon_i=\varepsilon_{i+1}\\
c\alpha_{i}=\alpha_{i+1}
\end{cases} 
\end{align*}
with $\varepsilon_{n+1}=\varepsilon_i$ and $\alpha_{n+1}=\alpha_1$.
Clearly this gives a free action on $Q_n$. For $0\leq j\leq k-1$, denote by $a_j$ the sum of all paths of length $j$. Then
\begin{displaymath}
A^{C_n} = \mathrm{span}\{ a_0,a_1,\ldots ,a_{k-1}\} 
\end{displaymath}
and the assignment
\begin{displaymath}
a_j\mapsto x^j
\end{displaymath}
extends to an algebra isomorphism $A^{C_n}\to \Bbbk [x]/(x^k)$. By Theorem~\ref{thm_group}$(ii)$ we have $\Lambda_n^{(k)}\sim_J A^{C_n}$, so it follows that $\Lambda_n^{(k)}\sim_J \Bbbk [x]/(x^k)$.

Moreover, $\Bbbk [x]/(x^k)$ is symmetric and, as $\Lambda_n^{(k)}$-$\Lambda_n^{(k)}$-bimodules, we have
\begin{displaymath}
\mathrm{Hom}_\Bbbk (\Lambda_n^{(k)},\Bbbk ) \simeq (\Lambda_n^{(k)})^{c^{k-1}},
\end{displaymath}
where $(\Lambda_n^{(k)})^{c^{k-1}}$ is the regular $\Lambda_n^{(k)}$-$\Lambda_n^{(k)}$-bimodule with the right action twisted by the action of $c^{k-1}\in C_n$. By Theorem~\ref{thm_group}$(iii)$, it follows that $\Lambda_n^{(k)}$ is symmetrically separably equivalent to $\Bbbk [x]/(x^k)$.
\end{proof}

\begin{remark}
\begin{enumerate}[(i)]
\item The algebra $\Bbbk [x]/(x^2)$ is symmetric, whereas $\Lambda_k^{(2)}$ is not symmetric, for any $k\geq 2$. Thus Proposition~\ref{prop_lambda} provides an example showing that symmetrically separable equivalence does not necessarily preserve the property of being symmetric as algebra.
\item Proposition~\ref{prop_lambda} also follows from Theorem~\ref{thm_skew}. As seen in \cite[Section~2.4]{RR}, $\Lambda_n^{(k)}$ is Morita equivalent to the skew group algebra $\Bbbk [x]/(x^k)\ast C_n$, where $C_n$ acts as $x\mapsto \zeta x$ for some primitive $n$'th root of unity $\zeta$.
\end{enumerate}
\end{remark}

In \cite[Theorem~8]{Pe}, it is shown that the truncated polynomial algebras $\Bbbk [x]/(x^k)$ and $\Bbbk [x]/(x^l)$ are not separably equivalent, for $k\neq l$ when $k=1,\ldots ,6$. Parts of this proof can be modified to $J$-inequivalence, and we obtain the following.

\begin{proposition}\label{prop_345}
Let $\Bbbk$ be an algebraically closed field. Then the following 
strict inequalities hold.
\begin{displaymath}
\Bbbk[x] /(x^3) >_J \Bbbk[x] /(x^4) >_J\Bbbk[x] /(x^5).
\end{displaymath}
\end{proposition}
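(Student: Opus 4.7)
The weak inequalities in the statement are immediate from Proposition~\ref{prop_surj}: for each $k\in\{3,4\}$, the natural surjective algebra homomorphism $\Bbbk[x]/(x^{k+1}) \twoheadrightarrow \Bbbk[x]/(x^{k})$, sending $x$ to $x$, gives $\Bbbk[x]/(x^{k+1}) \leq_J \Bbbk[x]/(x^{k})$. The content of the proposition therefore lies entirely in the strictness of the two inequalities.

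To establish strictness I argue by contradiction, adapting the strategy of \cite[Theorem~8]{Pe}. Fix $k\in\{3,4\}$, set $A=\Bbbk[x]/(x^{k})$ and $B=\Bbbk[x]/(x^{k+1})$, and suppose $A\sim_J B$. Then there exist bimodules ${}_AM_B$ and ${}_BN_A$ with $M\otimes_B N\simeq A\oplus X$ and $N\otimes_A M\simeq B\oplus Y$ as bimodules. Since $A$ and $B$ are local, commutative and symmetric, hence self-injective, the lemma following Lemma~\ref{lemma_top} forces ${}_AA$ to be a direct summand of ${}_AM$, $B_B$ a summand of $M_B$, and analogously for $N$. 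In particular the left-Loewy length of $M$ over $A$ is exactly $k$ and its right-Loewy length over $B$ is exactly $k+1$ (and likewise for $N$, with the roles of left and right reversed).

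The remaining step is to parlay these Loewy-length constraints, together with the isomorphism $M\otimes_B N\simeq A\oplus X$, into a numerical contradiction, and this is where the main difficulty lies. In Peacock's original argument, left-right projectivity of $M$ and $N$ reduces them to external $\Bbbk$-tensor products and the whole question to a clean rank count on the free parts. Without projectivity, $M$ and $N$ must be treated as arbitrary modules over the local commutative enveloping algebras $\Bbbk[x,y]/(x^{k},y^{k+1})$ and $\Bbbk[x,y]/(x^{k+1},y^{k})$, which are wild for $k\geq 2$. The plan is to decompose $M$ and $N$ into indecomposable bimodule summands and, for each indecomposable type, bound the contribution of its Loewy filtration to the summand structure of $M\otimes_B N$; for the two specific values $k\in\{3,4\}$ this bookkeeping is finite, and its outcome should contradict the assumption that ${}_AA_A$ appears as a summand of $M\otimes_B N$. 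Combining the resulting non-equivalences with the weak inequalities above then yields the asserted chain of strict $J$-inequalities.
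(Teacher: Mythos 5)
Your derivation of the weak inequalities via Proposition~\ref{prop_surj} matches the paper. The problem is the strictness part: what you give there is not a proof but a plan, and the plan as stated cannot be carried out. You propose to decompose $M$ and $N$ into indecomposable bimodule summands and bound the contribution of each ``indecomposable type'' to $M\otimes_B N$, asserting that ``for the two specific values $k\in\{3,4\}$ this bookkeeping is finite.'' But you have already observed, correctly, that the relevant enveloping algebras are wild; there is consequently no classification of indecomposable $A$-$B$-bimodules to enumerate, and no justification for the claim that the bookkeeping is finite. The phrase ``its outcome should contradict the assumption'' concedes that the contradiction has not actually been derived. The Loewy-length observations extracted from the lemma after Lemma~\ref{lemma_top} are correct as far as they go (both algebras are local self-injective, so the regular modules are indecomposable projective-injective and hence summands of the appropriate one-sided restrictions), but they are never converted into a numerical obstruction.

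The paper's proof of strictness is entirely different and much softer: it never touches the structure of $M$ and $N$. By Lemma~\ref{lemma_Tn} (a special case of Proposition~\ref{prop_otimesC}), $A\sim_J B$ implies $T_2(A)\sim_J T_2(B)$, and by Theorem~\ref{thm_reptype}, $J$-equivalent algebras over an algebraically closed field have the same representation type. The representation type of $T_2\bigl(\Bbbk[x]/(x^k)\bigr)$ is known from the Leszczy\'{n}ski--Skowro\'{n}ski classification of tame triangular matrix algebras: finite for $k<4$, tame for $k=4$, wild for $k>4$. Since these types differ for $k=3,4,5$, the three algebras are pairwise $J$-inequivalent, which combined with the weak inequalities gives strictness. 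If you want to salvage your write-up, replace the second half with an invariant of this kind that is provably preserved by $J$-equivalence, rather than attempting a direct analysis of the bimodules.
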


\begin{proof}
As $\Bbbk [x]/(x^k)$ is a quotient of $\Bbbk [x]/(x^l)$, for $k<l$, the statement with non-strict inequalities follows from Proposition~\ref{prop_surj}. We shall now prove that we do not have $J$-equivalence of these algebras.

We shall use Lemma~\ref{lemma_Tn}: if $A$ and $B$ are $J$-equivalent, then so are $T_2(A)$ and $T_2(B)$.
The representation type of $T_2 \big( \Bbbk [x]/(x^k)\big)$ depends on $k$ as follows, cf. \cite{LS1} (see also the proof of Theorem~8 in \cite{Pe}):
\begin{center}
\begin{tabular}{c|r}
Representation type &  $k$\\
\hline
Finite & $<4$\\
Tame & 4\\
Wild &  $>4$
\end{tabular}
\end{center}
Therefore, from Lemma~\ref{lemma_Tn} it follows that $\Bbbk[x] /(x^3) >_J \Bbbk[x] /(x^4) >_J\Bbbk[x] /(x^5)$.
\end{proof}

\subsection{Uniformly oriented $\mathbf{A_n}$-quivers with radical square zero}

Recall that $A_n=\Bbbk \mathbb{A}_n/R^2$.

\begin{proposition}\label{prop_An}
Let $\Bbbk$ be an algebraically clsoed field. Then we have the following chain of strict $J$-inequalities:
\begin{displaymath}
A_2>_J A_3>_J A_4>_J \ldots 
\end{displaymath}
\end{proposition}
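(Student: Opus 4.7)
I would separate the claim into the easy non-strict direction $A_n\geq_J A_{n+1}$ and the harder strict direction $A_n\not\leq_J A_{n+1}$.

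For the non-strict direction, consider the surjective algebra homomorphism $\pi:A_{n+1}\twoheadrightarrow A_n$ obtained by modding out by the two-sided ideal generated by the primitive idempotent $\varepsilon_{n+1}$ attached to the last vertex. This automatically kills the terminal arrow $\alpha_n$ as well, and the quotient is visibly isomorphic to $A_n$. Applying Proposition~\ref{prop_surj} gives $A_{n+1}\leq_J A_n$, hence $A_n\geq_J A_{n+1}$.

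For the strict direction, I would argue by contradiction: assume $A_n\sim_JA_{n+1}$. The available $J$-invariants in the paper so far are representation-theoretic, so the plan is to tensor with a cleverly chosen algebra $C=C(n)$, apply Proposition~\ref{prop_otimesC} (or its specialization Lemma~\ref{lemma_Tn}) to obtain $A_n\otimes_\Bbbk C\sim_J A_{n+1}\otimes_\Bbbk C$, and then use Theorem~\ref{thm_reptype} together with Drozd's trichotomy to compare the representation types of the two algebras. The natural candidates for $C$ are $\Bbbk\mathbb{A}_k$ (giving $T_k(A_n)$, in the style of the proof of Proposition~\ref{prop_345}), the opposite algebra $A_n^{\mathrm{op}}$ via Corollary~\ref{cor_enveloping_reptype} (giving the enveloping algebra $A_n\otimes A_n^{\mathrm{op}}$), or a truncated polynomial algebra $\Bbbk[x]/(x^k)$. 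Computing the representation type of these grid-like algebras with commutativity and $R^2=0$ relations, using e.g.\ \cite{LS1,LS2}, one expects a jump (finite $\to$ tame $\to$ wild) whose threshold depends sensitively on $n$.

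The main obstacle is precisely this sensitivity: ordinary representation type is a three-valued invariant and, taken with a fixed $C$, can only separate finitely many consecutive members of the chain. To push the argument through the whole infinite chain, one probably has to either let $C=C(n)$ grow with $n$ so that the threshold between tame and wild occurs exactly between $A_n\otimes C(n)$ and $A_{n+1}\otimes C(n)$, or pass to the finer subinvariants provided by Theorem~\ref{thm_cb} (the functions $g_A(d)$ and $\mu_A(n)$ counting generic modules and one-parameter families), or invoke Theorem~\ref{thm_domestic} to distinguish domestic from non-domestic behavior inside the tame case. A complementary fallback is a purely combinatorial argument directly on the bimodules: if $A_n\leq_J A_{n+1}$ is witnessed by ${}_{A_n}M_{A_{n+1}}$ and ${}_{A_{n+1}}N_{A_n}$, then Lemma~\ref{lemma_top} forces ${}_{A_n}P(M)$ to be a projective generator and forces every indecomposable projective-injective left $A_n$-module (of which there are $n-1$) to occur as a summand of ${}_{A_n}M$, which, combined with the analogous constraints on $N$ and the restrictive structure of bimodules over radical-square-zero Nakayama-type algebras, should be exploitable. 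The hard step is clearly matching an invariant to the whole chain in a uniform way; locating the right $C=C(n)$ is where I would expect the bulk of the work.
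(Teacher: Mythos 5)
Your non-strict direction coincides with the paper's: $A_{n+1}$ surjects onto $A_n$, so Proposition~\ref{prop_surj} gives $A_n\geq_J A_{n+1}$. For the strict direction, however, your primary plan --- tensoring with some $C$ and comparing representation types via Proposition~\ref{prop_otimesC}, Lemma~\ref{lemma_Tn} and Theorem~\ref{thm_reptype} --- cannot be pushed through this chain, and you correctly sense the obstruction but do not resolve it: the relevant representation types stabilize in $m$. For instance $A_m\otimes_\Bbbk A_m^{\mathrm{op}}$ is of \emph{finite} representation type for every $m$ (this is exactly what is used in Lemma~\ref{lemma_D}), and $T_k(A_m)$ likewise fails to distinguish $A_m$ from $A_{m+1}$ once $m$ is moderately large, since all the $A_m$ are equally ``nice'' (directed, Nakayama, radical square zero). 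The finer invariants you mention ($g_A(d)$, $\mu_A(n)$, domestic versus polynomial growth) all vanish or are vacuous in the finite-type regime, so no choice of $C(n)$ of the proposed kind produces the needed jump between consecutive terms. Representation type is simply not the invariant that separates this chain.

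The paper's actual argument is the one you relegate to a ``complementary fallback,'' and the content you would still have to supply is the following. Since $A_m$ is connected, ${}_{A_m}(A_m)_{A_m}$ is an indecomposable bimodule, so if it is a direct summand of $M\otimes_{A_n}N$ then by Krull--Schmidt it is already a summand of $M_0\otimes_{A_n}N_0$ for indecomposable summands $M_0$ of $M$ and $N_0$ of $N$; one may therefore assume $M$ indecomposable. Next, $A_m\otimes_\Bbbk A_n^{\mathrm{op}}$ is a special biserial algebra of finite type (an $m\times n$ commutative grid with the squares of horizontal and of vertical arrows equal to zero), whose indecomposable modules are commuting squares or string modules given by staircase walks alternating one step down and one step to the right; each source of such a walk occupies its own row and its own column, so every indecomposable $A_m$-$A_n$-bimodule $M$ satisfies $\dim\mathrm{top}(M)\leq\min\{m,n\}$. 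On the other hand, Lemma~\ref{lemma_top} forces every simple left $A_m$-module to occur in $\mathrm{top}({}_{A_m}M)$, i.e.\ $\dim\mathrm{top}(M)\geq m$, which is impossible when $m>n$. (Your appeal to the projective-injective summands of ${}_{A_n}M$ points at the wrong lemma; it is the projective cover statement of Lemma~\ref{lemma_top}, combined with the classification of the indecomposables, that does the work.) Without this reduction and the resulting bound on tops, your proposal does not yet contain a proof of strictness.
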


\begin{proof}
Since $A_{n}$ is a quotient of $A_{n+1}$ for $n\geq 2$, it follows from Proposition~\ref{prop_surj} that $A_n\geq_J A_{n+1}$.
Now, fix some $m,n\geq 2$. We use the notions of $A_m$-$A_n$-bimodules and left $A_n\otimes_\Bbbk A_n^\mathrm{op}$-modules interchangeably. 
The algebra $A_m\otimes_\Bbbk A_n^\mathrm{op}$ is isomorphic to the path algebra of the quiver
\begin{displaymath}
\xymatrix{
1|1\ar[d] & 1|2\ar[d]\ar[l] & \ldots \ar[l]& 1|n\ar[d]\ar[l]\\
2|1\ar[d] & 2|2\ar[d]\ar[l] & \ldots \ar[l] & 2|n\ar[d]\ar[l]\\
\vdots\ar[d] & \vdots \ar[d] & \ddots & \vdots\ar[d]\\
m|1 & m|2 \ar[l]& \ldots \ar[l] & m|n\ar[l]
}
\end{displaymath}
modulo the relations that all squares commute, and the composition of any two horizontal or any two vertical arrows is zero.
This algebra is special biserial and of finite representation type. Each indecomposable module corresponds to either a commuting square, or to a walk of alternating steps downwards (along the vertical arrows) and to the right (against the horizontal arrows); each visited node contains a copy of $\Bbbk$, and each arrow in the walk is the identity on $\Bbbk$. See \cite{BR,WW} for the classification, and \cite{MZ} for an explicit description of the indecomposables in case $n=m$.

In particular, we observe that, for any indecomposable $A_m$-$A_n$-bimodule $M$,
\begin{displaymath}
\dim \mathrm{top} (M)\leq \min \{ m,n\} .
\end{displaymath}
Now Lemma~\ref{lemma_top} implies that if $m>n$, then $A_m\not\geq_J A_n$.
\end{proof}

\subsection{More on Nakayama algebras}\label{ss_more_nakayama}

\begin{lemma}\label{lemma_D}
Let $\Bbbk$ be an algebraically closed field and $B$ a connected $\Bbbk$-algebra. Then the following holds.
\begin{enumerate}[(i)]
\item If $B\sim_J \Bbbk [x]/(x^2)$, then $B$ is Morita equivalent to either $\Bbbk [x]/(x^2)$ or $\Lambda_n^{(2)}$, for some $n\geq 2$.
\item If $B\sim_J A_m$, for some positive integer $m$, then $B$ and $A_m$ are Morita equivalent.
\end{enumerate}
\end{lemma}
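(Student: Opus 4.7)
The plan is to combine Corollary~\ref{cor_enveloping_reptype} with the classification of tame bimodule categories from \cite[Theorems~7.1,7.2]{LS2}, and then to use Theorem~\ref{thm_nakayama}(i) together with Proposition~\ref{prop_lambda} to pin down which of the resulting candidates lie in the appropriate $J$-class.

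The first step is a common reduction. Both $\Bbbk[x]/(x^2)$ and $A_m$ are radical square zero Nakayama algebras, and in case (ii) the proof of Proposition~\ref{prop_An} shows further that $A_m\otimes_\Bbbk A_m^{\mathrm{op}}$ is of finite representation type. Assuming $B\sim_J \Bbbk[x]/(x^2)$, respectively $B\sim_J A_m$, Corollary~\ref{cor_enveloping_reptype} yields that $B\otimes_\Bbbk B^{\mathrm{op}}$ has the same representation type as the corresponding enveloping algebra, hence is of at most tame representation type (finite in case (ii)). Invoking \cite[Theorems~7.1,7.2]{LS2}, $B$ must be Morita equivalent either to a radical square zero Nakayama algebra, that is to $\Bbbk[x]/(x^2)$, to some $A_n$ with $n\geq 2$, or to some $\Lambda_n^{(2)}$ with $n\geq 2$, or to one of the five exceptional algebras listed there.

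Next, I would use the $J$-inequalities already established to rule out all but the desired candidates. For (i), Theorem~\ref{thm_nakayama}(i) gives the strict chain $\Bbbk >_J A_2 >_J A_3 >_J \ldots >_J \Bbbk[x]/(x^2)$, so no $A_n$ is $J$-equivalent to $\Bbbk[x]/(x^2)$; meanwhile Proposition~\ref{prop_lambda} gives $\Lambda_n^{(2)} \sim_J \Bbbk[x]/(x^2)$ for all $n\geq 2$, which together with the obvious case $B=\Bbbk[x]/(x^2)$ yields exactly the claimed list. For (ii), the same equivalences $\Lambda_n^{(2)} \sim_J \Bbbk[x]/(x^2)$ combined with the strict inequality $A_m >_J \Bbbk[x]/(x^2)$ exclude every $\Lambda_n^{(2)}$, as well as $\Bbbk[x]/(x^2)$ itself, leaving $B$ Morita equivalent to some $A_n$, upon which Theorem~\ref{thm_nakayama}(i) forces $n=m$.

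The main obstacle will be ruling out the five exceptional algebras from \cite{LS2}. I would attack this by a case-by-case computation of the representation type of each of their enveloping algebras, comparing for (i) to that of $\Bbbk[x,y]/(x^2,y^2)$ and for (ii) to the path-algebra-with-commuting-squares description of $A_m\otimes_\Bbbk A_m^{\mathrm{op}}$ given in Proposition~\ref{prop_An}; if representation type alone proves insufficient, a finer analysis via Lemma~\ref{lemma_top} on the projective covers of the bimodules inducing the putative $J$-equivalence should show that the numerical constraints on indecomposable projective modules over each exceptional algebra cannot be reconciled with those over $\Bbbk[x]/(x^2)$, respectively over $A_m$.
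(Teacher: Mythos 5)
Your overall strategy --- reduce $B$ to a short list of Morita equivalence classes via the representation type of $B\otimes_\Bbbk B^{\mathrm{op}}$, then separate the candidates using known $J$-(in)equalities --- is close in spirit to the paper's, but the reduction step you chose leaves a gap that your proposal does not close. The paper instead applies \cite[Theorem~6.1]{LS2}: $T_3(B)$ is of finite representation type if and only if $B$ is a radical square zero Nakayama algebra. Since $T_3(\Bbbk[x]/(x^2))$ and $T_3(A_m)$ are then of finite type, Lemma~\ref{lemma_Tn} and Theorem~\ref{thm_reptype} force $T_3(B)$ to be of finite type as well, so $B$ is a radical square zero Nakayama algebra --- with no exceptional cases to dispose of. Your route through \cite[Theorems~7.1,7.2]{LS2} leaves the five exceptional algebras on the table, and you only sketch how you would remove them. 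This is the genuine gap: those algebras are on that list precisely because their enveloping algebras are tame, so in case (i), where $\Bbbk[x]/(x^2)\otimes_\Bbbk\Bbbk[x]/(x^2)$ is itself tame, the representation type of the enveloping algebra cannot exclude at least some of them (the algebra $C$ from Section~\ref{s_examples}, a quotient of $\Bbbk\mathbb{A}_4$ that is not radical square zero Nakayama, has a tame enveloping algebra), and the fallback of ``a finer analysis via Lemma~\ref{lemma_top}'' is a hope rather than an argument. As written, the proof is incomplete.

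A second, more easily repaired, problem is circularity. You invoke Theorem~\ref{thm_nakayama}(i) to conclude that no $A_n$ is $J$-equivalent to $\Bbbk[x]/(x^2)$, but the strictness of $A_n>_J\Bbbk[x]/(x^2)$ in that theorem is deduced in the paper \emph{from} the present lemma (see the end of Section~\ref{ss_more_nakayama}). The fix is the one the paper uses: $A_n\otimes_\Bbbk A_n^{\mathrm{op}}$ is of finite representation type while $\Bbbk[x]/(x^2)\otimes_\Bbbk \Bbbk[x]/(x^2)$ is tame, so Corollary~\ref{cor_enveloping_reptype} already separates them without any appeal to Theorem~\ref{thm_nakayama}. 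Similarly, $\Lambda_n^{(2)}\sim_J\Bbbk[x]/(x^2)$ (Proposition~\ref{prop_lambda}) together with this separation excludes the cyclic Nakayama algebras and $\Bbbk[x]/(x^2)$ itself in case (ii), and Proposition~\ref{prop_An}, which is independent of this lemma, forces $n=m$ among the $A_n$; those parts of your argument are fine once the circular reference is replaced.
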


\begin{proof}
Since we are interested in algebras only up to Morita equivalence, we
may assume that $B$ is basic.
According to \cite[Theorem~6.1]{LS2}, the algebra $T_3(B)$ is of finite representation type if  and only if $B$ is a radical square zero Nakayama algebra. 

Since $\Bbbk [x]/(x^2)$ and $A_m$, for $m\geq 1$, are radical square zero Nakayama algebras, it follows that they can only be two-sided equivalent to other radical square zero Nakayama algebras.

However, $\Bbbk [x]/(x^2)\otimes_\Bbbk \Bbbk [x]/(x^2)$ has tame representation type, whereas $A_m\otimes_\Bbbk A_m^\mathrm{op}$ has finite representation type, for all $n$. Thus Corollary~\ref{cor_enveloping_reptype} implies that $\Bbbk [x]/(x^2)$ and $A_m$ are inequivalent for all $m$.
\end{proof}

\begin{lemma}\label{lemma_DA}
Let $\Bbbk$ be a field. Then, for any positive integer $m$, and any $k\geq n\geq 2$, $\Bbbk [x]/(x^k)\leq_J \Bbbk \mathbb{A}_m/R^n$.
\end{lemma}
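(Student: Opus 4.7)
The strategy is to construct an algebra homomorphism $\varphi \colon A := \Bbbk[x]/(x^k) \to B := \Bbbk\mathbb{A}_m/R^n$ and use it to split the multiplication map $\mu \colon B \otimes_A B \to B$ as a $B$-$B$-bimodule homomorphism, thereby realising $B$ as a direct summand of $M \otimes_A N$ for the bimodules $M := {}_BB_A$ and $N := {}_AB_B$ (with $A$-actions defined via $\varphi$).

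Let $\alpha_1,\ldots,\alpha_{m-1}$ be the arrows of $\mathbb{A}_m$ and $\varepsilon_1,\ldots,\varepsilon_m$ the vertex idempotents, and set $b := \sum_{i=1}^{m-1}\alpha_i \in \mathrm{rad}(B)$. Since $\mathrm{rad}(B)^n = 0$ and $k \geq n$, we have $b^k = 0$, so there is a well-defined unital algebra homomorphism $\varphi \colon A \to B$ sending $x$ to $b$. (In the trivial case $m = 1$, $B = \Bbbk$ is maximal in the $J$-order and the claim is immediate.) Via $\varphi$, regard $B$ as both a $B$-$A$-bimodule and an $A$-$B$-bimodule, so that $M \otimes_A N = B \otimes_A B$.

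The heart of the argument is to exhibit a $B$-central element $e \in B \otimes_A B$ satisfying $\mu(e) = 1_B$: such an $e$ yields a $B$-$B$-bimodule section $\sigma(b) := be = eb$ of $\mu$. I would take $e := \sum_{i=1}^m \varepsilon_i \otimes \varepsilon_i$. Then $\mu(e) = \sum_i \varepsilon_i = 1_B$, and commutativity with each $\varepsilon_j$ is immediate since both $\varepsilon_j e$ and $e\varepsilon_j$ reduce to $\varepsilon_j \otimes \varepsilon_j$. For each arrow $\alpha_j$, a direct computation using $\alpha_j\varepsilon_i = \delta_{ij}\alpha_j$ and $\varepsilon_i\alpha_j = \delta_{i,j+1}\alpha_j$ gives $\alpha_j e = \alpha_j \otimes \varepsilon_j$ and $e\alpha_j = \varepsilon_{j+1}\otimes\alpha_j$. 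These coincide in $B \otimes_A B$ because the $A$-balance identity $\varepsilon_{j+1}\varphi(x)\otimes\varepsilon_j = \varepsilon_{j+1}\otimes\varphi(x)\varepsilon_j$, combined with $\varepsilon_{j+1}b = \alpha_j = b\varepsilon_j$, reads exactly $\alpha_j \otimes \varepsilon_j = \varepsilon_{j+1}\otimes\alpha_j$.

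Once $e$ is seen to be $B$-central, $\sigma$ is a $B$-$B$-bimodule section of $\mu$ and so $B$ is a direct summand of $M \otimes_A N$, giving $A \leq_J B$. The main obstacle, such as it is, lies in verifying the single tensor-balance identity $\alpha_j \otimes \varepsilon_j = \varepsilon_{j+1}\otimes\alpha_j$; this is also precisely where the hypothesis $k \geq n$ enters, through the well-definedness of $\varphi$.
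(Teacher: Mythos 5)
Your argument is correct, and it takes a genuinely different route from the paper. The paper deduces the lemma from earlier machinery: it picks $l\geq m$ with $\mathrm{char}(\Bbbk)\nmid l$, invokes Proposition~\ref{prop_lambda} to get $\Bbbk[x]/(x^k)\sim_J\Lambda_l^{(k)}$ (via invariants under the rotation action of a cyclic group), and then applies Proposition~\ref{prop_surj} to the surjection $\Lambda_l^{(k)}\twoheadrightarrow\Bbbk\mathbb{A}_m/R^n$ obtained by cutting the cycle. You instead work directly with the homomorphism $\varphi\colon\Bbbk[x]/(x^k)\to B:=\Bbbk\mathbb{A}_m/R^n$, $x\mapsto\sum_i\alpha_i$ (well defined exactly because $k\geq n$ kills $b^k$), and exhibit the explicit element $e=\sum_i\varepsilon_i\otimes\varepsilon_i$ splitting the multiplication $\mu\colon B\otimes_A B\to B$. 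The key balance identity $\alpha_j\otimes\varepsilon_j=\varepsilon_{j+1}\varphi(x)\otimes\varepsilon_j=\varepsilon_{j+1}\otimes\varphi(x)\varepsilon_j=\varepsilon_{j+1}\otimes\alpha_j$ is checked correctly, centrality on the generators $\varepsilon_i,\alpha_j$ propagates to all of $B$, and hence $c\mapsto ce=ec$ is a bimodule section of $\mu$, giving $B\oplus X\simeq B\otimes_A B$ and thus $\Bbbk[x]/(x^k)\leq_J B$ with $M={}_BB_A$, $N={}_AB_B$. Your construction is the natural dual of Proposition~\ref{prop_surj} (a split injection rather than a split surjection); it buys a self-contained, completely explicit proof with no characteristic hypothesis entering at any intermediate step, whereas the paper's detour through $\Lambda_l^{(k)}$ is shorter given the results already established and keeps the statement inside the invariants/skew-group framework used elsewhere. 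Note, consistently with Corollary~\ref{cor_not_seprel}, that your bimodule ${}_BB_A$ is not right $A$-projective in general, so this splitting cannot be upgraded to separable division. One cosmetic point: you use $b$ both for $\sum_i\alpha_i$ and for a generic element of $B$ in the definition of $\sigma$; these should be distinguished.
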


\begin{proof}
Fix some $l\geq m$ such that $\mathrm{char}\, (\Bbbk )\nmid l$. 
By Proposition~\ref{prop_lambda}, $\Bbbk [x]/(x^k)\sim_J \Lambda_l^{(k)}$.
Since there is a surjective algebra morphism $\Lambda_l^{(k)}\twoheadrightarrow \Bbbk \mathbb{A}_m/R^n$, the claim follows from Proposition~\ref{prop_surj}.
\end{proof}

Theorem~\ref{thm_nakayama}$(i)$ now follows from Propositions \ref{prop_345} and \ref{prop_An}, together with Lemmata~\ref{lemma_D} and \ref{lemma_DA}.

\subsection{Kronecker algebra}
In this section, we shall prove Theorem~\ref{thm_nakayama}$(iii)$ and $(iv)$.

\subsubsection{Polynomial algebras}
It is immediate from Proposition~\ref{prop_surj} that
$\Theta\leq_J A_2$.
However, if the field is algebraically closed, then $\Theta $ is of tame representation type, whereas $A_m$ is of finite type, for all $m$. This indicates that the Kronecker algebra and $A_2$ should not be ``close`` in the two-sided order.
Indeed, the Kronecker algebra is smaller than the dual numbers $D:=\Bbbk [x]/(x^2)$  in the two-sided preorder.

\begin{proposition}\label{prop_kronecker}
For any field $\Bbbk$ we have that
$\Theta\leq_J \Bbbk [x]/(x^2)$.
If $\Bbbk$ is algebraically closed, then the inequality is strict.
\end{proposition}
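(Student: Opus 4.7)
The plan is to establish $\Theta \leq_J D$ (with $D = \Bbbk[x]/(x^2)$) over any field by exhibiting explicit bimodules ${}_D M_\Theta$ and ${}_\Theta N_D$ for which $M \otimes_\Theta N \simeq D$ as $D$-$D$-bimodules, and then to deduce strictness in the algebraically closed case from representation-type considerations.

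For the construction, I take $M = D e_1 \oplus D e_2$, free of rank two as a left $D$-module, with right $\Theta$-action determined by $e_i \cdot \varepsilon_j = \delta_{ij} e_i$, $e_2 \cdot \alpha = e_1$, and $e_2 \cdot \beta = x \cdot e_1$. Symmetrically, I take $N = f_1 D \oplus f_2 D$, free of rank two as right $D$-module, with left $\Theta$-action $\varepsilon_i \cdot f_j = \delta_{ij} f_j$, $\alpha \cdot f_1 = f_2$, and $\beta \cdot f_1 = f_2 \cdot x$. Decomposing $M \otimes_\Theta N$ by the orthogonal idempotents $\varepsilon_1, \varepsilon_2$, each of the two ``diagonal'' pieces $(M\varepsilon_i) \otimes_\Bbbk (\varepsilon_i N)$ is isomorphic to $D \otimes_\Bbbk D$ as a $D$-$D$-bimodule. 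The $\alpha$-relation $m\alpha \otimes n = m \otimes \alpha n$ identifies these two pieces via the identity, leaving a single copy of $D \otimes_\Bbbk D$, after which the $\beta$-relation reduces to $dx \otimes d' = d \otimes xd'$, which is exactly the defining relation for $D \otimes_D D$. Hence $M \otimes_\Theta N \simeq D \otimes_D D \simeq D$, so $\Theta \leq_J D$.

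For strictness when $\Bbbk$ is algebraically closed, I would invoke Theorem~\ref{thm_reptype}: $D$ is of finite representation type (a Nakayama algebra with only two indecomposables), while the Kronecker algebra $\Theta$ is of tame but not finite representation type. Were $D \leq_J \Theta$ to hold, Theorem~\ref{thm_reptype}(i) would force $\Theta$ to be of finite representation type, a contradiction. Consequently $D \not\sim_J \Theta$, so the inequality is strict.

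The main obstacle I anticipate is identifying the correct bimodules $M$ and $N$. Naive choices fail: any projective $D$-$\Theta$-bimodule is $\Bbbk$-split, and tensor products of $\Bbbk$-split bimodules remain $\Bbbk$-split, but the regular $D$-bimodule ${}_D D_D$ is \emph{not} $\Bbbk$-split and hence cannot appear as a summand of such a tensor product. Similarly, viewing $\Theta$ itself as a $D$-$D$-bimodule via any unital algebra map $D \to \Theta$ (necessarily sending $x$ to some $\lambda\alpha + \mu\beta$) does not realize $D$ as a direct summand --- a quick socle-count on the resulting $D \otimes_\Bbbk D^{\mathrm{op}}$-module shows this. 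The insight behind the construction above is to use both arrows of the Kronecker quiver in tandem: one arrow glues the two ``legs'' of the tensor product, while the other transmits the action of $x$ across.
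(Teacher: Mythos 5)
Your proof is correct and takes essentially the same route as the paper: both exhibit explicit $(2,2)$-dimensional bimodules built from a regular representation of the Kronecker quiver that is free over $\Bbbk[x]/(x^2)$ on the appropriate side (the paper uses the pair $(I_2,J_2(1)^t)$ with $N=\mathrm{Hom}_{\Bbbk[x]/(x^2)}(M,\Bbbk[x]/(x^2))$, you use the parameter-$0$ regular module with an explicitly described dual), and then verify $M\otimes_\Theta N\simeq \Bbbk[x]/(x^2)$ by a direct computation --- yours organized via the idempotent decomposition, the paper's basis by basis. The only divergence is the strictness step, where the paper cites Lemma~\ref{lemma_D} while you apply Theorem~\ref{thm_reptype}(i) directly to exclude $\Bbbk[x]/(x^2)\leq_J\Theta$ on representation-type grounds; both are valid and both ultimately rest on the same representation-type obstruction.
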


\begin{proof}
The proof of the first part is completely explicit: we define two bimodules and compute their tensor product over $\Theta$ to verify that it is isomorphic to $D=\Bbbk [x]/(x^2)$ as a $D$-$D$-bimodule.

Let $\varepsilon_1,\varepsilon_2$ be the paths of length zero 
at the nodes 1 and 2 of the Kronecker quiver, respectively.

Consider now the $D$-$\Theta$-bimodule $M$ which, as a representation of the Kronecker quiver, has dimension vector $(2,2)$, and  such that the right action of $\alpha$ is given by $I_2$, and of $\beta$ by the transpose $2\times 2$-Jordan block with eigenvalue 1, that is
\begin{align*}
J_2(1)^t =\begin{bmatrix}
1&0\\ 1&1
\end{bmatrix}
\end{align*}
 Moreover, on the copy of $\Bbbk^2$ at each vertex, the left action of $D$ is determined by the $x$ action via $\begin{bmatrix}
0 & 0\\ 1& 0
\end{bmatrix}$. Putting everything together, we get:
\begin{displaymath}
\xymatrix{
\Bbbk^2 \ar@(ul,dl)[]_{\begin{bmatrix}0&0\\ 1&0 \end{bmatrix}}& \ar@/_/[l]_{I_2} \ar@/^/[l]^{J_2(1)^t}  \Bbbk^2 \ar@(ur,dr)[]^{\begin{bmatrix}0&0\\ 1&0 \end{bmatrix}}
}
\end{displaymath}
Set $N=\mathrm{Hom}_D(M,D)$. Then $N$ is a $\Theta$-$D$-bimodule, with the action given as follows:
\begin{displaymath}
\xymatrix{
\Bbbk^2 \ar@(ul,dl)[]_{\begin{bmatrix}0&0\\ 1&0 \end{bmatrix}} \ar@/^/[r]^{I_2} \ar@/_/[r]_{J_2(1)^t}&  \Bbbk^2 \ar@(ur,dr)[]^{\begin{bmatrix}0&0\\ 1&0 \end{bmatrix}}
}
\end{displaymath}
We claim that $M\otimes_\Theta N\simeq D$, as $D$-$D$-bimodules.

In $M$, fix bases $\{ m_1,m_2\}$ and $\{ m_3,m_4\}$ of $M\varepsilon_1$ and $M\varepsilon_2$, respectively. Similarly, in $N$, fix bases $\{ n_1,n_2\}$ and $\{ n_3,n_4\}$ of $\varepsilon_1N$ and $\varepsilon_2N$.

Now consider the tensor product $M\otimes_\Theta N$. 
We shall find a basis of  this tensor product by using the definition: $M\otimes_\Theta N$ is the quotient of  $M\otimes_\Bbbk N$ by the subbimodule generated by
\begin{displaymath}
mb\otimes n=m\otimes bn,\quad m\in M,\, n\in N,\,b\in \Theta.
\end{displaymath}
First of all, we have
\begin{displaymath}
m_1\otimes n_3= m_1\varepsilon_1\otimes n_3=m_1\otimes \varepsilon_1 n_3=0.
\end{displaymath}
In the same way, we deduce that, for all $i\in \{ 1,2\}$ and $j\in \{ 3,4\}$, we have
\begin{align*}
m_i\otimes n_j=0=m_j\otimes n_i.
\end{align*}
Using the action of $\alpha$, we find
\begin{align}
m_1\otimes n_1= m_3\alpha \otimes n_1&=m_3\otimes \alpha n_1= m_3\otimes n_3 \label{eq:kron1} \\
m_1\otimes n_2&=m_3\otimes n_4 \label{eq:kron2} \\
m_2\otimes n_1&= m_4\otimes n_3 \label{eq:kron3} \\
m_2\otimes n_2 &= m_4\otimes n_4. \label{eq:kron4}
\end{align}
Moreover, using $\beta$ yields
\begin{align}
m_1\otimes n_1 = (m_3-m_4)\beta \otimes n_1 = (m_3-m_4)\otimes \beta n_1 = (m_3-m_4)\otimes (n_3+n_4) \label{eq:kron5}
\end{align}
and
\begin{align}
m_1\otimes n_2 = (m_3-m_4)\beta \otimes n_2 = (m_3-m_4)\otimes n_4. \label{eq:kron6}
\end{align}
Combining \eqref{eq:kron2} and \eqref{eq:kron6}, yields that $m_4\otimes n_4=0$. This, together with \eqref{eq:kron1} and \eqref{eq:kron5}, implies that
\begin{displaymath}
m_3\otimes n_4=m_4\otimes n_3.
\end{displaymath}
What remains are the two linearly independent basis elements
\begin{align*}
m_1\otimes n_1 &= m_3\otimes n_3\\
m_1\otimes n_2=m_3\otimes n_4&=m_4\otimes n_3=m_2\otimes n_1.
\end{align*}
The linear map $M\otimes_\Theta N\to D$, defined via
\begin{align*}
\begin{cases}
m_1\otimes n_1\mapsto 1\\
m_1\otimes n_2\mapsto x,
\end{cases}
\end{align*}
is an isomorphism of $D$-$D$-bimodules, as
\begin{displaymath}
xm_1\otimes n_1=m_2\otimes n_1 = m_1\otimes n_2 = m_1\otimes n_1x.
\end{displaymath}
This proves that $\Theta \leq_J D$.

If the field is algebraically closed, the strict inequality follows from Lemma~\ref{lemma_D}.
\end{proof}

\begin{remark}
It is clear that $M$ above is left projective, so that $(M\otimes_\Theta -,N\otimes_D-)$ form an adjoint pair of functors. 
The $D$-$D$-bimodule isomorphism $M\otimes_{\Theta} N=M\otimes_\Theta \mathrm{Hom}_{D}(M,D)\to D$ is just the evaluation.
\end{remark}

\begin{proposition}
If $\Bbbk$ is a perfect field, then $\Theta\geq_J \Bbbk [x,y]/(x^2,xy,y^2)$.
\end{proposition}

\begin{proof}
$\Theta$ has a subalgebra isomorphic to $\Bbbk [x,y]/(x^2,xy,y^2)$ spanned by $\{1,\alpha ,\beta\}$. Its radical is $(\alpha ,\beta )=\mathrm{rad}(\Theta )$. By \cite[Proposition~1.2]{Ka3}, the extension is separable, implying the desired result.
\end{proof}

\subsubsection{Non-uniformly oriented $A_3$}
For $n\geq 2$, denote by $Q_n'$ the following quiver.
\begin{displaymath}
\xymatrix{
&&& 2n\\
1\ar[r]\ar[urrr] & 2 & 3\ar[l]\ar[r] & 4 & 5\ar[r]\ar[l] & \ldots & 2n-1 \ar[l]\ar[ulll]
}
\end{displaymath}

\begin{proposition}
Fix an integers $n\geq 2$.
\begin{enumerate}[(i)]
\item If $\mathrm{char}(\Bbbk )\neq 2$, then $A_3'$ and $( A_3')^\mathrm{op}$ are symmetrically separably equivalent.
\item If $\mathrm{char} \, (\Bbbk )$ does not divide $n$, then $\Theta$ and $\Bbbk Q_n'$ are symmetrically separably equivalent.
\item $\Theta \leq_J A_3'$, and if $\Bbbk$ is algebraically closed the inequality is strict.
\end{enumerate}
\end{proposition}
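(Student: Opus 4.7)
The proposition splits naturally: parts (i) and (ii) assert symmetric separable equivalences, which I would establish through the group-action machinery of Section~4, in direct analogy with Proposition~\ref{prop_lambda}. Part (iii) is then a quick corollary of (ii) together with Proposition~\ref{prop_surj}, upgraded to strict by the representation-type invariance from Theorem~\ref{thm_reptype}.

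For (ii), the plan is to realize $\Bbbk Q_n'$ as Morita equivalent to a skew group algebra $\Theta\ast C_n$ for a suitable cyclic action of $C_n$ on $\Theta$: fix both vertices and scale one of the two Kronecker arrows by a primitive $n$th root of unity $\zeta$, while keeping the other fixed. Under the hypothesis $\mathrm{char}(\Bbbk)\nmid n$, the vertex idempotents $e_1,e_2$ of $\Theta$ then each split into $n$ primitive idempotents inside $\Theta\ast C_n$ (indexed by the characters of $C_n$), producing $2n$ simple modules. A direct computation of the arrows between these split idempotents should reproduce the cyclic bipartite structure of $Q_n'$ (with $n$ sources alternating with $n$ sinks), giving a Morita equivalence $\Theta\ast C_n\simeq\Bbbk Q_n'$; Theorem~\ref{thm_skew} then yields the symmetric separable equivalence. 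For (i), I would analogously seek an auxiliary algebra $B$ equipped with a $C_2$-action such that Theorem~\ref{thm_skew} or Theorem~\ref{thm_group}(iii) relates $B$ to both $A_3'$ and $(A_3')^{\mathrm{op}}$, and then compose the two symmetric separable equivalences through $B$.

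For (iii), first pick any $n\geq 2$ with $\mathrm{char}(\Bbbk)\nmid n$; such an $n$ always exists (take $n=2$ if $\mathrm{char}(\Bbbk)\neq 2$ and $n=3$ otherwise). The key observation is that the full subquiver of $Q_n'$ on the vertices $\{1,2,3\}$ is exactly $1\to 2\leftarrow 3$, i.e.\ the underlying quiver of $A_3'$. Hence quotienting $\Bbbk Q_n'$ by the two-sided ideal generated by the vertex idempotents $e_4,\ldots,e_{2n}$ kills precisely the paths passing through the removed vertices and yields a surjective algebra homomorphism $\Bbbk Q_n'\twoheadrightarrow A_3'$. Proposition~\ref{prop_surj} then gives $\Bbbk Q_n'\leq_J A_3'$, and chaining with $\Theta\sim_J\Bbbk Q_n'$ from (ii) produces the desired $\Theta\leq_J A_3'$. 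For strictness over an algebraically closed field, I would argue by contradiction: by Theorem~\ref{thm_reptype}, $J$-equivalent algebras share the same representation type, but $A_3'$, being hereditary of Dynkin type $A_3$, is of finite representation type, whereas $\Theta$, the Kronecker algebra and hereditary of Euclidean type $\tilde A_1$, is tame. This contradiction forces $\Theta<_J A_3'$.

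The main obstacle is pinning down the correct cyclic action on $\Theta$ realizing $\Bbbk Q_n'$ as a skew group algebra in (ii), and identifying the corresponding $C_2$-symmetric parent algebra in (i); the character-theoretic computation of arrows in the basic algebra of $\Theta\ast C_n$ is the technically substantive step. Once (ii) is in place, part (iii) is essentially pure bookkeeping, since the quiver quotient and the representation-type argument are both immediate.
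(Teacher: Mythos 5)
Your treatment of parts (ii) and (iii) is essentially the paper's own argument. For (ii) you propose exactly the action used there: $C_n$ fixes the two vertices of the Kronecker quiver and scales one arrow by a primitive $n$th root of unity $\zeta$, and $\Theta\ast C_n$ is Morita equivalent to $\Bbbk Q_n'$ --- the paper simply cites \cite[Section~2.4]{RR} (Example~(j)) for this identification rather than recomputing the arrows between the split idempotents, and then invokes Theorem~\ref{thm_skew}. Part (iii) is likewise obtained exactly as you describe: pick $n$ coprime to the characteristic, use the surjection $\Bbbk Q_n'\twoheadrightarrow A_3'$ together with Proposition~\ref{prop_surj} and $\Theta\sim_J\Bbbk Q_n'$, and get strictness from Theorem~\ref{thm_reptype} since $A_3'$ is of finite type while $\Theta$ is tame.

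Part (i), however, is left as an unrealized plan. You say you would ``seek an auxiliary algebra $B$ with a $C_2$-action'' relating $A_3'$ and $(A_3')^{\mathrm{op}}$, and you yourself flag identifying it as the main obstacle --- but that identification is the entire content of (i), so as written the part is not proved. The resolution is that no auxiliary algebra is needed: let $C_2$ act on $(A_3')^{\mathrm{op}}$ itself, i.e.\ on the path algebra of $1\leftarrow 2\rightarrow 3$, by the reflection swapping the vertices $1$ and $3$ and the two arrows. By \cite[Section~2.4]{RR}, Example~(b), the skew group algebra $(A_3')^{\mathrm{op}}\ast C_2$ is Morita equivalent to $A_3'$ (the source splits into two vertices indexed by the characters of $C_2$, while the free orbit $\{1,3\}$ collapses to one vertex), so a single application of Theorem~\ref{thm_skew}, using $\mathrm{char}(\Bbbk)\neq 2$, gives the symmetric separable equivalence of $(A_3')^{\mathrm{op}}$ and $A_3'$ directly. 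Note also that your proposed composition of two symmetric separable equivalences through an intermediate algebra would additionally require a (routine but unstated) transitivity argument; the direct route avoids it.
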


\begin{proof}
For parts (i) and (ii) we use Theorem~\ref{thm_skew} and  examples from \cite[Section~2.4]{RR}. 
Indeed, if $C_2$ acts on $(A_3')^\mathrm{op}$ by reflection in the vertical line through the node 2, then $(A_3')^\mathrm{op}\ast C_2$ is Morita equivalent to $A_3'$ (Example (b)).
Likewise, as in Example (j), take some primitive $n$'th root of unity and let $C_n=\left\langle c\right\rangle$ act on $\Theta$ by $c\alpha =\alpha$, $c\beta =\zeta \beta$. Then $\Theta \ast C_n$ is Morita equivalent to $\Bbbk Q_n'$.

Part (iii) is already known from Example~\ref{ex_sep}$(ii)$ in case $\Bbbk$ is perfect.
If not, take some $n$ such that $\mathrm{char}\, (\Bbbk )\nmid n$. Then $\Theta \sim_J \Bbbk Q_n'$ by (ii). Since $A_3'$ is a quotient of $\Bbbk Q_n'$ the non-strict inequality follows from Proposition~\ref{prop_surj}.
If the field is algebraically closed, $\Theta$ and $A_3'$ cannot be $J$-equivalent as they have different representation types, so the statement follows.
\end{proof}

\begin{remark}
The fact that $\Theta \sim_J \Bbbk Q_n'$ if $\mathrm{char} \, (\Bbbk )\nmid n$ follows also from Theorem~\ref{thm_group}, since $\Theta$ is isomorphic to the algebra of invariants of $\Bbbk Q_n'$ under the obvious action of $C_n$. However, since $\Theta$ is not symmetric we cannot invoke Theorem~9$(iii)$, so to conclude symmetrically separable equivalence we need the approach of skew group algebras.
\end{remark}

\subsubsection{Additional skew group examples}

We note that the symmetric separable equivalence of $A_3'$ and its opposite algebra is a special case of two more general skew group algebra constructions.

By Example (b) in \cite[Section~2.4]{RR}, the path aglebras of the following two quivers are $J$-equivalent when $\mathrm{char}\, (\Bbbk )\neq 2$.
\begin{align*}
\xymatrix@R=10pt{
 & 2\ar[r] & \ldots \ar[r] & n\\
1\ar[ur] \ar[dr]\\
 & 2' \ar[r] & \ldots \ar[r] & n'
}\hspace{1cm}
\xymatrix@R=10pt{
1\ar[dr]\\
& 2\ar[r] & \ldots \ar[r] & n\\
1'\ar[ur]
}
\end{align*}

In a similar way, let $A$ be the path algebra of the following quiver.
\begin{displaymath}
\xymatrix@R=10pt{
1\ar[drr] & 2\ar[dr] & \ldots & n\ar[dl]\\
&&n+1
}
\end{displaymath}
If $C_n$ acts in the obious way, then $A\ast C_n$ is Morita equivalent to $A^\mathrm{op}$, so $A\sim_J A^\mathrm{op}$ if $\mathrm{char}\, (\Bbbk )\nmid n$.

In the case $n=4$ we get in the last example the path algebra of the four subspace quiver $T_4$ and its opposite. These appear in \cite[Section~2.4]{RR} several times, and we obtain the following.

\begin{proposition}
Let $\Bbbk$ be an algebraically closed field such that $\mathrm{char} \, (\Bbbk)\neq 2,3$.
Then the path algebras of the following quivers are symmetrically separably equivalent.
\begin{displaymath}
\xymatrix{
\bullet \ar@/^/[r]\ar@/_/[r] & \bullet
}
\hspace{20pt}
\xymatrix@R=8pt@C=13pt{
\bullet \ar[dr] &&&& \bullet\ar[dl]\\
& \bullet & \bullet \ar[l]\ar[r] & \bullet\\
\bullet \ar[ur] &&&& \bullet \ar[ul]
}
\end{displaymath}

\begin{displaymath}
\xymatrix@R=13pt@C=13pt{
& \bullet \ar[d]\\
\bullet \ar[r] &\bullet & \bullet \ar[l]\\
& \bullet \ar[u]
}
\hspace{20pt}
\xymatrix@R=13pt@C=13pt{
& \bullet \\
\bullet & \bullet \ar[l]\ar[r]\ar[u]\ar[d] & \bullet\\
&\bullet
}
\hspace{20pt}
\xymatrix@R=13pt@C=13pt{
& \bullet \ar[dl]\ar[dr]\\
\bullet && \bullet\\
& \bullet \ar[ul]\ar[ur]
}
\end{displaymath}

\begin{displaymath}
\xymatrix@R=13pt@C=13pt{
&&& \bullet\ar[d]\\
\bullet\ar[r] & \bullet & \bullet\ar[l]\ar[r] & \bullet & \bullet\ar[l]\ar[r] & \bullet & \bullet\ar[l]
}
\hspace{13pt}
\xymatrix@R=13pt@C=13pt{
&&& \bullet\\
\bullet & \bullet \ar[l]\ar[r] & \bullet & \bullet\ar[l]\ar[r] \ar[u] & \bullet & \bullet \ar[l]\ar[r] & \bullet
}
\end{displaymath}
\end{proposition}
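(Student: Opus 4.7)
The plan is to reduce each displayed pair to an application of Theorem~\ref{thm_skew}, using the list of skew group algebra constructions catalogued in \cite[Section~2.4]{RR}. The key observation is that, since $\mathrm{char}(\Bbbk)\notin\{2,3\}$, every cyclic group of order $2$ or $3$ (which will be all that appear) has order coprime to $\mathrm{char}(\Bbbk)$, so Theorem~\ref{thm_skew} applies without obstruction.

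For each pictured pair $(A,B)$ the procedure is the same: identify in \cite[Section~2.4]{RR} a finite cyclic group $G\in\{C_2,C_3\}$ together with an action of $G$ on $A$ such that the skew group algebra $A\ast G$ is Morita equivalent to $B$. By Theorem~\ref{thm_skew}, $A$ and $A\ast G$ are symmetrically separably equivalent, and since Morita equivalence is a (trivial) form of symmetric separable equivalence and such equivalences compose, $A$ and $B$ are symmetrically separably equivalent as well.

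Concretely, I would match the pairs as follows. The Kronecker versus seven-vertex pair corresponds to the $C_3$-action on $\Theta$ of the form $c\alpha=\alpha$, $c\beta=\zeta\beta$ with $\zeta$ a primitive third root of unity, directly generalizing the action used in the preceding proposition but with $n=3$ instead of $n=2$. The three ``star''-type pairs in the middle row come from $C_2$ reflection actions (and in one case a $C_3$ rotation) through the central vertex of a $D_4$- or $\tilde{D}_4$-shaped quiver, exactly in the spirit of Example~(b) of \cite[Section~2.4]{RR} used for $A_3'$. The two pairs in the last row, involving a longer linear chain with a pendant vertex attached near the middle, arise from a $C_2$ reflection of the longer symmetric chain through its central vertex, also recorded in \cite[Section~2.4]{RR}.

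The main (and only) obstacle is bookkeeping: for each displayed pair, one must verify that the quiver formed by the orbits of the prescribed action, together with the induced arrow structure and relations (if any), reproduces precisely the displayed dual quiver, matching vertex counts, arrow multiplicities and orientations. Since these computations are all recorded in \cite[Section~2.4]{RR}, the proof reduces to invoking that reference and checking the combinatorial match-up pair by pair; no new ideas are required beyond what has already been used for the Kronecker algebra, $A_3'$ and $\Bbbk Q_n'$ in the previous paragraphs.
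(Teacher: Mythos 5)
Your overall strategy (reduce every pair to Theorem~\ref{thm_skew} via constructions from \cite[Section~2.4]{RR}, then compose with Morita equivalences) is the right one in spirit, and it is essentially what the paper does for all but the last display. However, two of your concrete identifications fail, and one of the failures is a genuine gap rather than a bookkeeping slip. First, the $C_3$-action $c\alpha=\alpha$, $c\beta=\zeta\beta$ on $\Theta$ produces $\Bbbk Q_3'$, which has $2\cdot 3=6$ vertices (an alternating $\tilde{A}_5$-cycle), whereas the second quiver in the first display has $7$ vertices and is of type $\tilde{D}_6$ (a central trivalent chain with two pendants at each end); so your proposed match for the very first pair does not reproduce the displayed quiver. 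Second, and more seriously, your claim that only groups of order $2$ and $3$ appear cannot be correct: the two quivers in the last display are of affine type $\tilde{E}_7$, and a $C_2$-reflection of a linear chain $\mathbb{A}_{2m+1}$ through its central vertex yields a quiver with the branch vertex at an \emph{end} (type $D_{m+2}$, coming from the two simples over the fixed vertex), never one with the branch vertex in the middle of a $7$-chain. No $C_2$ or $C_3$ construction in \cite{RR} produces these quivers, and the paper explicitly singles them out as the cases not covered by earlier discussion.

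The missing ingredient is the action of the full symmetric group $S_4$ on the four subspace quiver $T_4$ (permuting the four sources). By Demonet's theorem \cite[Theorem~1]{De}, when $\mathrm{char}(\Bbbk)\nmid n!$ the skew group algebra of the $n$-subspace quiver by $S_n$ is Morita equivalent to the path algebra of the quiver whose vertices are the partitions of $n$ and of $n-1$, with arrows given by inclusion of Young diagrams; for $n=4$ this is exactly the displayed $\tilde{E}_7$ quiver, and reversing all arrows in $T_4$ reverses them in the output, giving the second $\tilde{E}_7$ orientation. This is precisely where the hypothesis $\mathrm{char}(\Bbbk)\neq 3$ enters (one needs $\mathrm{char}(\Bbbk)\nmid 24$); under your version of the argument the assumption $\mathrm{char}(\Bbbk)\neq 3$ would be superfluous, which should have been a warning sign. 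For the remaining quivers ($\tilde{A}_1$, $\tilde{D}_6$, $\tilde{D}_4$ in both orientations, and the alternating $\tilde{A}_3$-cycle), $\mathrm{char}(\Bbbk)\neq 2$ suffices and the relevant actions are indeed in \cite{RR} or earlier in the section, so that part of your plan goes through once the match-ups are corrected.
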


\begin{proof}
Except for the last two quivers, of affine Dynkin type $\tilde{E}_7$, all examples are already discussed here or in \cite{RR}. For them, $\mathrm{char}\, (\Bbbk )\neq 2$ is sufficient.

Now, let $S_n$ act on the path algebra of
\begin{displaymath}
\xymatrix{
1 & \ldots & n\\
& n+1\ar[ur] \ar[ul]
}
\end{displaymath}
by permuting the nodes $1,\ldots ,n$.
By \cite[Theorem~1]{De} we obtain, if $\mathrm{char}\, (\Bbbk )\nmid n!$, that the skew group algebra is Morita equivalent to the path algebra of the quiver given as follows: the nodes are integer partitions of $n$ and $n-1$. If $\lambda \vdash n$ and $\mu \vdash n-1$, then there is an arrow from $\lambda$ to $\mu$ if and only if  the Young diagram of shape $\mu$ can be included in that of shape $\mu$. In other words, this quiver is the rows $n$ and $n-1$ in the Young lattice, with arrows pointing from $n$ to $n-1$.
In particular, when $n=4$ we have that  $T_4\ast S_4$ is Morita equivalent to the path algebra of the following quiver.
\begin{displaymath}
\xymatrix{
(4)\ar[dr] & (3,1)\ar[d]\ar[dr] & (2^2)\ar[d] & (2,1^2)\ar[d]\ar[dl] & (1^4)\ar[dl]\\
& (3) & (2,1) & (1^3)
}
\end{displaymath}
Reversing the direction of the arrows in the original quiver does the same in the quiver obtained via the skew group algebra construction, so we are done.
\end{proof}

\subsection{Other examples for Nakayama algebras}
We have thus far used the representation types of enveloping algebras and small triangular matrix algebras to prove the inequivalence of  various algebras.
The work \cite{LS1} of Leszczynski and Skowronski on the representation types of tensor products of algebras can provide many more examples. We conclude this section with some additional examples of Nakayama algebras.

Denote by $C$ the path algebra of $\mathbb{A}_4$ modulo the ideal generated by the path from 2 to 4.
\begin{displaymath}
\xymatrix{
1\ar[r] & 2\ar[r] \ar@/^1pc/@{..}[rr] & 3\ar[r] & 4 
}
\end{displaymath}
This algebra is of interest as it is one of the few examples of algebras with tame enveloping algebras which is not a radical square zero Nakayama algebra.
We could exchange it with its opposite in the following result.

\begin{proposition}
Let $\Bbbk$ be an algebraically closed field. Then
\begin{align*}
\Bbbk \mathbb{A}_6<_J
\Bbbk \mathbb{A}_5 \leq_J \Bbbk \mathbb{A}_5/R^4 <_J\Bbbk \mathbb{A}_4 <_J C\leq_J \Bbbk \mathbb{A}_3<A_3
\end{align*}
and
\begin{align*}
\Bbbk \mathbb{A}_6<_J
\Bbbk \mathbb{A}_6/R^4<_J
\Bbbk \mathbb{A}_5/R^4
\end{align*}
\end{proposition}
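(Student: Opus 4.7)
The plan has two halves. First, I would establish every weak inequality $\leq_J$ by exhibiting an explicit surjective algebra homomorphism and invoking Proposition~\ref{prop_surj}. Concretely: $\Bbbk\mathbb{A}_{n+1}\twoheadrightarrow \Bbbk\mathbb{A}_n$ obtained by killing an endpoint idempotent (and hence the incident arrow); $\Bbbk\mathbb{A}_n\twoheadrightarrow \Bbbk\mathbb{A}_n/R^k$; $\Bbbk\mathbb{A}_4\twoheadrightarrow C$ obtained by killing the path $\alpha_3\alpha_2$; $C\twoheadrightarrow \Bbbk\mathbb{A}_3$ obtained by killing $\varepsilon_4$ (which also kills the arrow $3\to 4$); $\Bbbk\mathbb{A}_3\twoheadrightarrow A_3=\Bbbk\mathbb{A}_3/R^2$; and finally the analogous endpoint-quotient $\Bbbk\mathbb{A}_6/R^4\twoheadrightarrow \Bbbk\mathbb{A}_5/R^4$. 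Composing these yields each individual weak inequality in both chains.

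Second, I would establish the strict inequalities one pair at a time. The strictness $\Bbbk\mathbb{A}_3 <_J A_3$ is immediate from Lemma~\ref{lemma_D}(ii): a $J$-equivalence would force $\Bbbk\mathbb{A}_3$ to be Morita equivalent to $A_3$, which fails since both algebras are basic and have different $\Bbbk$-dimension. For every other strict inequality the strategy is uniform: by Corollary~\ref{cor_enveloping_reptype} a hypothetical $J$-equivalence $A\sim_J B$ yields $A\otimes_\Bbbk A^{\mathrm{op}}\sim_J B\otimes_\Bbbk B^{\mathrm{op}}$, and by Lemma~\ref{lemma_Tn} it also yields $T_n(A)\sim_J T_n(B)$ (equivalently $\Bbbk\mathbb{A}_n\otimes_\Bbbk A\sim_J\Bbbk\mathbb{A}_n\otimes_\Bbbk B$) for every $n$; in each case the two sides must share their representation type over the algebraically closed field $\Bbbk$. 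The task is then to pick either the enveloping algebra or an appropriate $T_n$ that places $A$ and $B$ on opposite sides of a finite/tame/wild boundary in the Leszczynski--Skowronski classification \cite{LS1}. The pair $\Bbbk\mathbb{A}_4 <_J C$ is handled directly by the enveloping algebra: $C\otimes_\Bbbk C^{\mathrm{op}}$ is tame (as noted above by the author), while $\Bbbk\mathbb{A}_4\otimes_\Bbbk\Bbbk\mathbb{A}_4^{\mathrm{op}}$ is of finite representation type.

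The main obstacle is the bookkeeping for pairs landing in the ``wild zone,'' namely $\Bbbk\mathbb{A}_6<_J \Bbbk\mathbb{A}_5$, $\Bbbk\mathbb{A}_5/R^4<_J \Bbbk\mathbb{A}_4$, $\Bbbk\mathbb{A}_6<_J \Bbbk\mathbb{A}_6/R^4$, and $\Bbbk\mathbb{A}_6/R^4<_J \Bbbk\mathbb{A}_5/R^4$. Here both full enveloping algebras will typically be wild, so the direct enveloping comparison fails to discriminate them, and one instead has to choose $T_n$ with $n$ placed precisely at the finite/tame/wild cutoff supplied by \cite{LS1} for two-row or three-row tensor products of (truncated) Nakayama algebras. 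This is routine given the LS1 tables, but identifying the correct $n$ for each of these four pairs is where all the real (though elementary) work sits.
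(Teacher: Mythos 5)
Your overall strategy---surjections via Proposition~\ref{prop_surj} for the weak inequalities, then representation types of enveloping algebras and of $T_n(-)$ via Corollary~\ref{cor_enveloping_reptype} and Lemma~\ref{lemma_Tn} for strictness---is exactly the paper's (which uses $T_2$ throughout), and most individual steps go through; your alternative treatment of $\Bbbk\mathbb{A}_3<_J A_3$ via Lemma~\ref{lemma_D}(ii) and a dimension count is a valid shortcut. But there is one genuine gap: the pair $\Bbbk\mathbb{A}_6/R^4$ versus $\Bbbk\mathbb{A}_5/R^4$ cannot be separated by the finite/tame/wild trichotomy at all, so there is no ``correct $n$'' waiting in the tables of \cite{LS1}. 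Both enveloping algebras are wild, both $T_2$'s are tame, and for $n\geq 3$ both $T_n$'s are wild (already Loewy length $4$ forces $T_3$ out of the tame range by \cite{LS2}), so no choice of $n$ puts the two algebras on opposite sides of a cutoff. The paper has to pass to a strictly finer invariant inside the tame class: $T_2(\Bbbk\mathbb{A}_5/R^4)$ is domestic while $T_2(\Bbbk\mathbb{A}_6/R^4)$ is of polynomial growth but not domestic, and then Theorem~\ref{thm_domestic} (domesticity and polynomial growth are preserved by $J$-equivalence) finishes the argument. Your toolkit does not include this, so the second chain is not actually closed by the plan as written.

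Two smaller points. First, $\Bbbk\mathbb{A}_4\otimes_\Bbbk\Bbbk\mathbb{A}_4^{\mathrm{op}}$ is of wild, not finite, representation type (see \cite[Theorems~7.1,7.2]{LS2}; already $\Bbbk\mathbb{A}_3\otimes_\Bbbk\Bbbk\mathbb{A}_3^{\mathrm{op}}$ is only tame). This does not break your argument for $\Bbbk\mathbb{A}_4<_J C$, since wild versus tame still separates it from $C\otimes_\Bbbk C^{\mathrm{op}}$, but the stated type is wrong. Second, the inequality $\Bbbk\mathbb{A}_5/R^4\leq_J\Bbbk\mathbb{A}_4$ needs a surjection $\Bbbk\mathbb{A}_5/R^4\twoheadrightarrow\Bbbk\mathbb{A}_4$ (kill $\varepsilon_5$; the unique length-$4$ relation dies with it), which is not literally a composite of the maps you list, though it is of the same endpoint-quotient type and easily supplied.
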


\begin{proof}
The non-strict inequalities follows from Proposition~\ref{prop_surj}. 
We consider the enveloping algebras $A\otimes_\Bbbk A^\mathrm{op}$, and the algebras $T_2(A)$, for the relevant algebras $A$.
From \cite[Theorems~7.1,7.2]{LS2} and \cite[Theorems~1,4]{LS1} resepctively, we deduce the following.
\begin{center}
\begin{tabular}{c|c|c}
$A$ & Representation type of $A\otimes A^\mathrm{op}$ & Representation type of $T_2(A)$\\
\hline
$A_3$ & Finite & Finite\\
\hline
$A_4$ & Finite &  Finite\\
\hline
$C$ & Tame & Finite\\
\hline
$\Bbbk \mathbb{A}_3$ & Tame & Finite\\
\hline
$\Bbbk \mathbb{A}_4$ & Wild & Finite\\
\hline
$\Bbbk \mathbb{A}_5/R^4$ & Wild & Tame\\
\hline
$\Bbbk \mathbb{A}_5$ & Wild & Tame\\
\hline
$\Bbbk \mathbb{A}_6$ & Wild & Wild\\
\end{tabular}
\end{center}
The first chain of inequivalences follow from Corollary~\ref{cor_enveloping_reptype} and Lemma~\ref{lemma_Tn}.

Moreover, while both $T_2(\Bbbk \mathbb{A}_6/R^4)$ and $T_2(\Bbbk \mathbb{A}_5/R^4)$ are of tame type, the former is of polynomial growth, whereas the latter is domestic. By Theorem~\ref{thm_domestic} and Lemma~\ref{lemma_Tn}, the two are not $J$-equivalent.
\end{proof}

Next, we shall consider Nakayama algebras that lie ''inbetween'' radical square zero and radical qube zero, i.e. such that all paths of length 3 are zero, and some, but not all, paths of length 2 are zero.
\begin{proposition}
Fix $n\geq 3$. Let $I$ be an ideal in $\Bbbk Q_n$ such that $R^3\subsetneq I\subsetneq R^2$. Then $\Bbbk [x]/(x^2)>_J \Bbbk Q_n /I >_J \Bbbk [x]/(x^3)$.
\end{proposition}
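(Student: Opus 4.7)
The plan is to decompose the proof into three steps: the non-strict inequalities, then each strict inequality separately.

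For the non-strict inequalities $\Bbbk[x]/(x^3) \leq_J \Bbbk Q_n/I \leq_J \Bbbk[x]/(x^2)$, the hypothesis $R^3 \subseteq I \subseteq R^2$ provides two surjective algebra homomorphisms
\begin{displaymath}
\Lambda_n^{(3)} = \Bbbk Q_n / R^3 \twoheadrightarrow \Bbbk Q_n / I \twoheadrightarrow \Bbbk Q_n / R^2 = \Lambda_n^{(2)}.
\end{displaymath}
Proposition~\ref{prop_surj} converts each surjection into a $J$-inequality, giving $\Lambda_n^{(3)} \leq_J \Bbbk Q_n/I \leq_J \Lambda_n^{(2)}$. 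Combining this with Proposition~\ref{prop_lambda}, which provides $\Lambda_n^{(k)} \sim_J \Bbbk[x]/(x^k)$ when $\mathrm{char}(\Bbbk) \nmid n$ (and otherwise the characteristic restriction is dealt with by passing through a larger cyclic Nakayama algebra $\Lambda_l^{(k)}$ with $l$ coprime to the characteristic, as in the proof of Lemma~\ref{lemma_DA}), yields the desired non-strict chain.

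For the strict inequality $\Bbbk[x]/(x^2) >_J A$, where $A := \Bbbk Q_n/I$, the plan is to argue via a Morita invariant. Since $I \subsetneq R^2$, at least one length-$2$ path in $\Bbbk Q_n$ survives in $A$, so $\mathrm{rad}(A)^2 \neq 0$ and $A$ is \emph{not} radical square zero. If $A \sim_J \Bbbk[x]/(x^2)$, then Lemma~\ref{lemma_D}(i) would force $A$ to be Morita equivalent to either $\Bbbk[x]/(x^2)$ or some $\Lambda_m^{(2)}$; both of these are radical square zero, and being radical square zero is a Morita invariant, producing the desired contradiction.

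The strict inequality $A >_J \Bbbk[x]/(x^3)$ is the main obstacle. The plan is to locate an auxiliary $\Bbbk$-algebra $C$ such that $C \otimes_\Bbbk A$ and $C \otimes_\Bbbk \Bbbk[x]/(x^3)$ have different representation types; via Proposition~\ref{prop_otimesC} together with Theorem~\ref{thm_reptype}, this would contradict $A \sim_J \Bbbk[x]/(x^3)$. Natural candidates for $C$ include the path algebras $\Bbbk \mathbb{A}_r$, so that tensoring realizes $T_r(-)$ and Lemma~\ref{lemma_Tn} applies directly, or the opposite algebras $A^{\mathrm{op}}$ or $\Bbbk[x]/(x^3)^{\mathrm{op}}$, in which case Corollary~\ref{cor_enveloping_reptype} compares rep types of enveloping algebras. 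The structural feature I intend to exploit is that $\Bbbk[x]/(x^3) \sim_J \Lambda_m^{(3)}$ has uniform Kupisch series $(3,3,\ldots,3)$, whereas $A$ is a cyclic Nakayama algebra whose Kupisch series genuinely mixes the values $2$ and $3$ (the assumption $R^3 \subsetneq I \subsetneq R^2$ forces both a projective of length $2$ and a projective of length $3$ to occur). The Leszczy\'nski--Skowro\'nski classification from \cite{LS1,LS2} supplies explicit representation-type data for such tensor products in terms of this Kupisch profile, and I expect this to furnish the required distinguishing invariant.
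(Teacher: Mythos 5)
Your decomposition into three steps and your treatment of the first two coincide with the paper's proof: the non-strict chain comes from the surjections $\Lambda_n^{(3)}\twoheadrightarrow \Bbbk Q_n/I\twoheadrightarrow \Lambda_n^{(2)}$ together with Propositions~\ref{prop_surj} and \ref{prop_lambda}, and the strictness of $\Bbbk[x]/(x^2)>_J\Bbbk Q_n/I$ is exactly the paper's appeal to Lemma~\ref{lemma_D}(i) plus the observation that $\Bbbk Q_n/I$ is not radical square zero. One caveat on your aside about the characteristic: the trick of Lemma~\ref{lemma_DA} (replacing $\Lambda_n^{(k)}$ by $\Lambda_l^{(k)}$ with $\mathrm{char}(\Bbbk)\nmid l$) works there because killing vertices of a cyclic quiver yields \emph{linear} quiver algebras $\Bbbk\mathbb{A}_m/R^j$; it does not produce a surjection $\Lambda_l^{(3)}\twoheadrightarrow \Bbbk Q_n/I$ for $l\neq n$, since $\Bbbk Q_n/I$ lives on the cycle with exactly $n$ vertices. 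So that workaround does not apply here; the proposition (like the paper's proof) really is using $\mathrm{char}(\Bbbk)\nmid n$ tacitly.

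The genuine gap is in the third step, $\Bbbk Q_n/I>_J\Bbbk[x]/(x^3)$. You have the right strategy (find $C$ with $C\otimes_\Bbbk\Bbbk Q_n/I$ and $C\otimes_\Bbbk\Bbbk[x]/(x^3)$ of different representation type and invoke Proposition~\ref{prop_otimesC} and Theorem~\ref{thm_reptype}), but you never commit to a $C$ nor verify the dichotomy, and the choice matters: several of your listed candidates demonstrably fail. The enveloping algebras cannot distinguish the two algebras, since by \cite[Theorems~7.1,7.2]{LS2} both $(\Bbbk Q_n/I)^{\mathrm{en}}$ and $(\Bbbk[x]/(x^3))^{\mathrm{en}}$ are wild (neither algebra is radical square zero Nakayama). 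Likewise $T_2$ is useless, because $T_2(\Bbbk[x]/(x^3))$ is of \emph{finite} type (see the table in the proof of Proposition~\ref{prop_345}), so no representation-type comparison with $T_2(\Bbbk Q_n/I)$ can contradict $\Bbbk Q_n/I\leq_J\Bbbk[x]/(x^3)$. The paper's proof takes $C=\Bbbk\mathbb{A}_3$, i.e.\ compares $T_3(\Bbbk Q_n/I)$, which is tame by \cite[Theorems~6.1,~6.2]{LS2}, with $T_3(\Bbbk[x]/(x^3))$, which is wild; then $\Bbbk Q_n/I\leq_J\Bbbk[x]/(x^3)$ would force $T_3(\Bbbk[x]/(x^3))$ to be tame or finite, a contradiction. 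Your appeal to a ``Kupisch profile'' heuristic points in the right direction, but until you name $T_3$ and supply the two representation-type facts, the decisive step of the argument is missing.
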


\begin{proof}
For any $n$, we have surjective algebra homomorphisms 
\begin{displaymath}
\Lambda_n^{(3)}\twoheadrightarrow \Bbbk Q_n/I \twoheadrightarrow \Lambda_n^{(2)},
\end{displaymath}
so Proposition~\ref{prop_surj} and Proposition~\ref{prop_lambda} implies the non-stric inequialities.

The statement $\Bbbk [x]/(x^2)>_J \Bbbk Q_n /I$ follows from Lemma~\ref{lemma_D}.

From \cite[Theorem~6.1,Theorem~6.2]{LS2} we deduce that $T_3\big( \Bbbk Q_n /I\big)$ is of tame representation type, whereas $T_3\big( \Bbbk [x]/(x^3)\big)$ is of wild representation type. 
The statement follows from Lemma~\ref{lemma_Tn} and Theorem~\ref{thm_reptype}.
\end{proof}

We conclude with an example utilizing the categorical results from \cite[Section~4]{Pe}.
Fix an integer $n$ and let $I_n$ be the ideal of $\Bbbk Q_{2n}$ generated by all paths between odd nodes.

\begin{corollary}
\begin{enumerate}[(i)]
\item For all integers $m$ and $n$ not divisible by $\mathrm{char}\, (\Bbbk )$, the algebras $\Bbbk Q_{2m}/I_m$ and $\Bbbk Q_{2n}/I_n$ are symmetrically separably equivalent.
\item If $\Bbbk$ is algebraically closed, then $\Bbbk [x]/(x^2)>_J \Bbbk Q_{2n} /I_n>_J \Bbbk [x]/(x^3)$.
\end{enumerate}
\end{corollary}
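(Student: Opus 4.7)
My plan is to realize each $\Bbbk Q_{2n}/I_n$ as a skew group algebra over a common small algebra, so that Theorem~\ref{thm_skew} can be applied. Set $B := \Bbbk Q_2/I_1$. Assuming $\mathrm{char}\,\Bbbk \nmid n$, fix a primitive $n$-th root of unity $\zeta \in \Bbbk$ and let $C_n = \langle c\rangle$ act on $B$ by the algebra automorphism $c\cdot\alpha = \zeta\alpha$, $c\cdot\beta = \beta$, $c\cdot\varepsilon_j = \varepsilon_j$; this clearly preserves $\beta\alpha = 0$. The first step is to verify that the skew group algebra $B\ast C_n$ is Morita equivalent to $\Bbbk Q_{2n}/I_n$, by an argument parallel to Example (j) in \cite[Section~2.4]{RR}: the elements $e_{i,j} := \tfrac{1}{n}\sum_{k=0}^{n-1}\zeta^{-ik}\,\varepsilon_j\otimes c^k$, for $i=0,\ldots,n-1$ and $j=1,2$, form a complete set of primitive orthogonal idempotents in $B\ast C_n$, and tracking the images of $\alpha\otimes c^k$ and $\beta\otimes c^k$ under $e_{i',j'}(-)e_{i,j}$ yields exactly the arrows of $Q_{2n}$ together with the defining relations $I_n$.

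Once this Morita equivalence is in hand, Theorem~\ref{thm_skew} yields that $B$ and $\Bbbk Q_{2n}/I_n$ are symmetrically separably equivalent. Since symmetric separable equivalence is an equivalence relation, part (i) then follows at once: any two members of the family $\{\Bbbk Q_{2n}/I_n : \mathrm{char}\,\Bbbk\nmid n\}$ are symmetrically separably equivalent through the common intermediary $B$.

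For part (ii), I first observe that $I_n$ is an ideal of $\Bbbk Q_{2n}$ satisfying $R^3 \subsetneq I_n \subsetneq R^2$. Indeed, every length-three path of $Q_{2n}$ contains a length-two odd-to-odd subpath and hence lies in $I_n$, so $R^3 \subseteq I_n$; strictness of the left inclusion is witnessed by the length-two odd-to-odd paths themselves, and $I_n \neq R^2$ because the length-two even-to-even paths survive. For $n\geq 2$, the quiver $Q_{2n}$ has at least four vertices, so the preceding Proposition applies directly to $\Bbbk Q_{2n}/I_n$ and gives the desired strict chain $\Bbbk[x]/(x^2)>_J \Bbbk Q_{2n}/I_n >_J \Bbbk[x]/(x^3)$. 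For $n=1$ the preceding Proposition does not formally apply; here I would invoke part (i) to obtain $\Bbbk Q_2/I_1 \sim_J \Bbbk Q_4/I_2$ (provided $\mathrm{char}\,\Bbbk\neq 2$, as needed for (i) at $n=2$) and transfer the strict inequalities from the $n=2$ case.

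The main obstacle, I expect, will be the explicit verification of the Morita equivalence $B\ast C_n \sim_{\mathrm{Morita}} \Bbbk Q_{2n}/I_n$ in part (i). Computing the Ext quiver of $B\ast C_n$ from the idempotents $e_{i,j}$ is routine but involved linear algebra, and one must carefully check which length-two compositions of basis elements $e_{i'',j''}(-)e_{i',j'}(-)e_{i,j}$ vanish, in order to confirm that the resulting relations are exactly $I_n$ rather than some other admissible ideal of $\Bbbk Q_{2n}$.
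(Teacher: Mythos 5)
Your argument is correct, but part (i) takes a genuinely different route from the paper. The paper observes that $\Bbbk Q_2/I_1$ is the Auslander algebra of $\Bbbk [x]/(x^2)$ and, for $n>1$, that $\Bbbk Q_{2n}/I_n$ is the Auslander algebra of $\Lambda_n^{(2)}$, and then combines Proposition~\ref{prop_lambda} with Peacock's result (\cite[Theorem~2(b)]{Pe}, recalled in Section~5.1) that symmetric separable equivalence passes to Auslander algebras. You instead realize $\Bbbk Q_{2n}/I_n$ directly as an algebra Morita equivalent to the skew group algebra $(\Bbbk Q_2/I_1)\ast C_n$ for the scaling action of weights $(1,0)$ on the two arrows, and invoke Theorem~\ref{thm_skew}. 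Your identification is correct, and the ``main obstacle'' you flag is less serious than you fear: the idempotents $e_{i,j}$ split each vertex into $n$ copies, the arrow $\alpha$ (eigenvalue $\zeta$) contributes arrows $(1,i)\to (2,i+1)$ while $\beta$ (eigenvalue $1$) contributes $(2,i)\to (1,i)$, so the Ext quiver is a $2n$-cycle, and the relation $\beta\alpha=0$ lifts precisely to the vanishing of the length-two paths between the type-$1$ (odd) vertices; since the ideal generated by those relations already cuts the path algebra down to dimension $5n=\dim\big((\Bbbk Q_2/I_1)\ast C_n\big)$, the relations cannot be anything other than $I_n$. The two routes are two faces of the same phenomenon (the remark after Proposition~\ref{prop_lambda} already notes that $\Lambda_n^{(2)}$ is Morita equivalent to $\Bbbk[x]/(x^2)\ast C_n$, and the Auslander algebra construction is compatible with forming skew group algebras); yours avoids the categorical machinery of \cite[Theorem~2(b)]{Pe} at the cost of an explicit idempotent computation. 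For part (ii) you do essentially what the paper does, namely check $R^3\subsetneq I_n\subsetneq R^2$ and quote the preceding proposition; you are in fact more careful than the paper about $n=1$, where $Q_2$ has fewer than three vertices --- only note that when $\mathrm{char}\,\Bbbk=2$ your intermediary should be $\Bbbk Q_6/I_3$ rather than $\Bbbk Q_4/I_2$.
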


\begin{proof}
Part $(i)$ follows from \cite[Theorem~2(b)]{Pe}, which, as a special case, tells us that if $A$ and $B$ are symmetrically seaprably equivalent, then so are their Auslander algebras.
We see that $\Bbbk Q_2/I_1$ is the Auslander algebra of the dual numbers, and for $n>1$,
$\Bbbk Q_{2n}/I_n$ is the  Auslander algebra of $\Lambda_n^{(2)}$.
As the dual numbers and $\Lambda_n^{(2)}$ are symmetrically separably equivalent whenever $\mathrm{char}\, (\Bbbk )\nmid n$ by Proposition~\ref{prop_lambda}, the claim follows.

Part $(ii)$ is a special case of the previous proposition.
\end{proof}

\section{Left-right projective bimodules}
Left-right projective bimodules are in themselves an interesting object of study, and they play the main role in stable equivalence of Morita type and separable equivalence.
Despite this, not much is known about the category $A$-lrproj-$B$ of left-right projective $A$-$B$-bimodules in the general case.

To our knowledge, the most detailed investigation is in the case when $A$ and $B$ are both self-injective and can be found in \cite{Po1}. In this case, $A\text{-lrproj-}B$ is of finte type if and only if any left-right projective $A$-$B$-bimodule is projective.
Here we consider the case when one of the algebras is self-injective and the other is directed.
\begin{theorem}\label{thm_lrproj}
Let $A=\Bbbk Q/I$ be a directed algebra (in the sense that $Q$ is acylic), and $B$ a self-injective algebra.
Then any left-right projective $A$-$B$-bimodule is projective.
\end{theorem}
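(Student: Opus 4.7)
My plan is to induct on the number of vertices of $Q$. The base case $|Q_0|=1$ is immediate: acyclicity forces $Q$ to have no arrows, so $A=\Bbbk$ and ``projective as a bimodule'' coincides with ``right $B$-projective''. For the inductive step, pick a source $i_0$ of $Q$, which exists because $Q$ is finite and acyclic. Because no arrows end at $i_0$, one has $e_{i_0}Ae_k=0$ for $k\neq i_0$ and $e_{i_0}Ae_{i_0}=\Bbbk e_{i_0}$, from which one verifies that $Ae_{i_0}$ is already a two-sided ideal; the quotient $A':=A/Ae_{i_0}$ is thus the directed algebra $\Bbbk Q'/I'$ obtained by deleting $i_0$ and its outgoing arrows, and it satisfies $A'e_k=Ae_k$ as left $A$-modules for every $k\neq i_0$.

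Let now $M$ be left-right projective, put $M_{i_0}:=e_{i_0}M$, and consider the canonical $A$-$B$-bimodule map
\[
\phi\colon (Ae_{i_0})\otimes_\Bbbk M_{i_0}\longrightarrow M,\qquad a\otimes m\longmapsto am.
\]
I would first check that $\phi$ is injective with image equal to the $Ae_{i_0}$-isotypic component of the left-projective decomposition $M\cong\bigoplus_k(Ae_k)^{n_k}$. Indeed, $e_{i_0}\mathrm{rad}(A)=0$ gives $(M/\mathrm{rad}(A)M)_{i_0}=M_{i_0}$, so $n_{i_0}=\dim_\Bbbk M_{i_0}$; combined with $e_{i_0}Ae_k=0$ for $k\neq i_0$, a dimension count forces $\phi$ to be an isomorphism onto the submodule $A\cdot M_{i_0}=(Ae_{i_0})^{n_{i_0}}\subseteq M$.

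Next, I consider the short exact sequence of $A$-$B$-bimodules
\[
0\longrightarrow\mathrm{im}\,\phi\longrightarrow M\longrightarrow M/\mathrm{im}\,\phi\longrightarrow 0.
\]
As right $B$-modules, $\mathrm{im}\,\phi\cong M_{i_0}^{\dim_\Bbbk Ae_{i_0}}$; since $M_{i_0}$ is a right $B$-summand of $M$ (hence right $B$-projective), self-injectivity of $B$ makes it right $B$-injective, so the sequence splits in $\mathrm{mod}$-$B$ and $M/\mathrm{im}\,\phi$ is right $B$-projective. As a left $A$-module $M/\mathrm{im}\,\phi\cong\bigoplus_{k\neq i_0}(Ae_k)^{n_k}$ is annihilated by $e_{i_0}$, so it descends to an $A'$-$B$-bimodule that is left $A'$-projective via $A'e_k=Ae_k$. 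By induction, $M/\mathrm{im}\,\phi$ is projective as an $A'$-$B$-bimodule; since every projective $A'$-$B$-bimodule is a direct sum of bimodules $(A'e_k)\otimes_\Bbbk R=(Ae_k)\otimes_\Bbbk R$ with $k\neq i_0$ and $R$ a projective right $B$-module, it is also projective as an $A$-$B$-bimodule. The bimodule sequence therefore splits, yielding $M\cong\mathrm{im}\,\phi\oplus M/\mathrm{im}\,\phi$, and the left summand $(Ae_{i_0})\otimes_\Bbbk M_{i_0}$ is itself a projective bimodule because $M_{i_0}$ is right $B$-projective. Hence $M$ is a projective $A$-$B$-bimodule.

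The delicate step is splitting $\mathrm{im}\,\phi$ off $M$ as a right $B$-summand: this is where self-injectivity of $B$ is indispensable, converting right $B$-projectivity of $M_{i_0}$ into the right $B$-injectivity needed to split the sequence on the right, and thereby ensuring that $M/\mathrm{im}\,\phi$ is again left-right projective so that the induction closes. Directedness of $A$ plays the complementary role of guaranteeing a source $i_0$ such that $A/Ae_{i_0}$ is again of the form $\Bbbk Q'/I'$ with one fewer vertex, so the induction actually makes progress.
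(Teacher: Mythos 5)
Your proof is correct. It rests on the same two pillars as the paper's argument -- peeling off a source vertex $i_0$ of the acyclic quiver, and using self-injectivity of $B$ to convert right $B$-projectivity of $e_{i_0}M$ into right $B$-injectivity so that the corresponding piece splits off on the right -- but the organization is genuinely different. The paper works inside the fixed bimodule $M$: it builds an increasing chain of projective subbimodules $Z\subseteq Z_1\subseteq\cdots$ generated by the tops $A\varepsilon_i\otimes_\Bbbk P_i$ at successive minimal vertices, and at each stage verifies by hand (including a small proof by contradiction about the complement $U$) that the new piece is a direct summand as a left module and as a right module. You instead run a clean induction on $|Q_0|$: you observe that $Ae_{i_0}$ is a two-sided ideal, form the quotient algebra $A'=A/Ae_{i_0}$, and fit $M$ into a short exact sequence whose kernel is $Ae_{i_0}\otimes_\Bbbk M_{i_0}$ and whose cokernel is a left-right projective $A'$-$B$-bimodule; the inductive hypothesis makes the cokernel bimodule-projective, which forces the sequence to split \emph{as bimodules} with no further argument. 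This buys you a cleaner justification of the final bimodule splitting (the paper's conclusion that $M$ decomposes as a bimodule is only argued via summand-ness on each side separately), at the modest cost of having to check that projective $A'$-$B$-bimodules inflate to projective $A$-$B$-bimodules, which your identification $A'e_k\cong Ae_k$ for $k\neq i_0$ handles. Both arguments use the same input in the same places, so neither is more general, but yours is the tighter write-up.
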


\begin{proof}
We assume that the labelling of the nodes of $Q$ are such that if there is an arrow $\alpha :i\to j$, then $i<j$.
Take some left-right projective $A$-$B$-bimodule $M$.
Set $M_i=\varepsilon_iM$ as the value of ${}_AM$ at the node $i\in Q_0$. 
Moreover, write
\begin{align*}
{}_AM \simeq \bigoplus_{i\in Q_0} A\varepsilon_i^{\oplus d_i} .
\end{align*}

We have $M_iB =(\varepsilon_i M)B=\varepsilon_i (MB)$,
so $M=\bigoplus_{i\in Q_0}M_i$ is a decomposition of $M_B$ into a direct sum of submodules.
In particular, each $M_i$ is a projective $B$-module.

Let $J_0$ be the set of nodes in $i\in Q_0$ such $A\varepsilon_i$ is a direct summand of ${}_AM$, and the subquotient $L_i$ occurs in ${}_AM$ only in this summand, i.e. $[M:L_i]=d_i>0$.
Note that $J_0\neq \emptyset$ since it contains
\begin{displaymath}
i_0=\min \{ i\in Q_0\mid A\varepsilon_i\ \text{is a direct summand of}\ {}_AM\}.
\end{displaymath}
For each $i\in J_0$, fix a proejctive right $B$-module $P_i$ such that $(M_i)_B\simeq P_i$. Considering the left $A$-module structure, $M_i$ is the top of $A\varepsilon_i$, and it generates a projective  subbimodule $A\varepsilon_i\otimes_\Bbbk P_i$, which is by construction a direct summand as a left $A$-module. Moreover, as $B$ is self-injective, $P_i$ is also injective. Hence it is a direct summand also as a right $B$-module.
Set
\begin{displaymath}
Z=\bigoplus_{i\in J_0} A\varepsilon_i\otimes_\Bbbk P_i .
\end{displaymath}
If $Z=M$ we are done. Else, set
\begin{displaymath}
j_0=\min \{ i\in Q_0\setminus J_0 \mid A\varepsilon_i\ \text{is a direct summand of}\ {}_AM\}.
\end{displaymath}
Then, considering the left $A$-module structure,
\begin{displaymath}
M_{j_0} = \varepsilon_{j_0} Z \oplus \mathrm{top} (A\varepsilon_{j_0}^{\oplus d_{j_0}}) .
\end{displaymath}
On the other hand, $M_{j_0}$ is a $B$-submodule, so for some $B$-module $U$,
\begin{align*}
M_{j_0} = \varepsilon_{j_0} Z \oplus U.
\end{align*}

We shall prove that $U=\mathrm{top}(A\varepsilon_{j_0}^{\oplus d_{j_0}})$.

Assume towards contadiction that there is some $u\in U$  such that $Au \in Z$. Fix some $a\in A$ such that $au\in Z\setminus \{ 0\}$.
Write $u=z+v$ for some $z\in (Z\cap M_{j_0})$ and $v\in \mathrm{top}(A\varepsilon_{j_0}^{\oplus d_{j_0}})$.
Now
\begin{displaymath}
au=az+av \in Z .
\end{displaymath}
However, $az\in Z$, and $av\in A\varepsilon_{j_0}^{d_{j_0}}$.
Since $A\varepsilon_{j_0}^{\oplus d_{j_0}}\cap Z=\{ 0\}$ it follows that $av=0$.
But $v$ is in the top of a projective $A$-module, so $av=0$ implies $aM_{j_0}=0$.
In particular $az=0$, so that $au=0$, a contradiction.

By the same reasoning as above, using that $B$ is self-injective, we obtain that $M_{j_0}$ generates a subbimodule $A\varepsilon_{j_0}\otimes_\Bbbk U$, and that $U$ is a projective right $B$-module. Setting
\begin{displaymath}
Z_1=Z\oplus A\varepsilon_{j_0}\otimes_\Bbbk U
\end{displaymath}
we obtain that $Z_1$ is a projective subbimodule which is a direct summand both as left and right module.
We can now repeat the above argument: either $M=Z_1$, or we set 
\begin{align*}
J_1&= J_0\cup \{ j_0\}\\
j_1&= \min \{ i\in Q_0 \setminus J_1 \mid A\varepsilon_i\ \text{is a direct summand of}\ {}_AM\}.
\end{align*}
Since $Q_0$ is finite, this will eventually stop, and we will have written ${}_AM_B$ as a direct sum of projective summands.
\end{proof}

\begin{corollary}\label{cor_not_seprel}
Let $A$ and $B$ be non-semisimple $\Bbbk$-algebras.
If $A$ is directed and $B$ is self-injective, then $A$ and  $B$ are not separably related.
\end{corollary}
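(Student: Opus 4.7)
The plan is to reduce everything to Theorem~\ref{thm_lrproj} together with the fact (recalled in Section~2.2) that ${}_AA_A$ is projective as a bimodule if and only if $A$ is separable, which in the finite-dimensional setting forces $A$ to be semisimple. Suppose, for a contradiction, that $A$ and $B$ are separably related; by symmetry it is enough to treat the case that $A$ separably divides $B$, so there exist left-right projective bimodules ${}_AM_B$ and ${}_BN_A$ with ${}_AA_A$ a direct summand of $M\otimes_B N$.

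First I would apply Theorem~\ref{thm_lrproj} directly to $M$: since $A$ is directed and $B$ is self-injective, $M$ is projective as an $A$-$B$-bimodule. To handle $N$, I would observe that $A^{\mathrm{op}}$ is again directed (reverse every arrow in $Q$; the resulting quiver is still acyclic) and $B^{\mathrm{op}}$ is again self-injective. Viewing ${}_BN_A$ as an $A^{\mathrm{op}}$-$B^{\mathrm{op}}$-bimodule preserves left- and right-projectivity, so Theorem~\ref{thm_lrproj} applied to the pair $(A^{\mathrm{op}},B^{\mathrm{op}})$ shows that $N$ is projective as a bimodule as well.

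The next step is to check that the tensor product $M\otimes_B N$ is a projective $A$-$A$-bimodule. Using the description of projective bimodules recalled in Section~2.2, I would write $M\simeq \bigoplus_{i} P_i\otimes_\Bbbk Q_i$ and $N\simeq \bigoplus_{j} P'_j\otimes_\Bbbk Q'_j$, where the $P_i,P'_j$ are indecomposable projective one-sided left modules and the $Q_i,Q'_j$ are indecomposable projective one-sided right modules over the appropriate algebras. Then
\begin{displaymath}
M\otimes_B N\;\simeq\; \bigoplus_{i,j} P_i\otimes_\Bbbk (Q_i\otimes_B P'_j)\otimes_\Bbbk Q'_j,
\end{displaymath}
and each summand is a direct sum of $\dim_\Bbbk(Q_i\otimes_B P'_j)$ copies of the projective $A$-$A$-bimodule $P_i\otimes_\Bbbk Q'_j$.

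Consequently, ${}_AA_A$ is a direct summand of a projective $A$-$A$-bimodule, hence is itself projective as a bimodule. By the characterisation recalled in Section~2.2, this forces $A$ to be separable, and since $A$ is finite-dimensional, $A$ must be semisimple, contradicting the hypothesis. The symmetric case where $B$ separably divides $A$ is handled identically, contradicting the non-semisimplicity of $B$. The only subtle point is the book-keeping with opposite algebras when applying Theorem~\ref{thm_lrproj} to $N$; the rest is a routine computation with projective bimodules.
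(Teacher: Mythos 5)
Your argument is correct and is essentially the intended derivation of the corollary from Theorem~\ref{thm_lrproj}: both bimodules inducing a separable division become projective bimodules, their tensor product is then a projective bimodule, and the regular bimodule being a summand forces the algebra to be separable, hence semisimple. The only remark is that the detour through $A^{\mathrm{op}}$ and $B^{\mathrm{op}}$ to make $N$ a projective bimodule is harmless but unnecessary: once $M\simeq\bigoplus_i P_i\otimes_\Bbbk Q_i$ is a projective bimodule, the one-sided projectivity of $N_A$ already gives $M\otimes_B N\simeq\bigoplus_i P_i\otimes_\Bbbk(Q_i\otimes_B N)$ with each $Q_i\otimes_B N$ a projective right $A$-module, which suffices.
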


In particular, Corollary~\ref{cor_not_seprel} implies that the two-sided inequalities $\Theta \leq_J \Bbbk [x]/(x^2)$  and $\Bbbk [x]/(x^n)\leq_J \Bbbk \mathbb{A}_m/R^n$ do not extend to separable division in the sense of Peacock.

\section{Final remarks}
Let $\ell \ell$ denote Loewy length.

\begin{conjecture}
Let $A$ and $B$ be finite-dimensional $\Bbbk$-algebras.
If $A\geq_J B$, then $\ell \ell (A)\leq \ell \ell (B)$.
\end{conjecture}

The conjecture seems to us intuitive. Interpreting Loewy length as 
nilpotency degree of the radical, it seems unlikely that an algebra with higher Loewy length would factor via the tensor product over an algebra with lower Loewy length.
Moreover, it is inherent to the situation of quotients (Proposition~\ref{prop_surj}) and, as already noted, automatic in the situation of skew group algebras.

There is a close connection between the constructions involving group actions from Section~4, i.e. the subalgebra of invariants and the skew group algebra.
The examples in this paper given by invariants under group actions could also be obtained using skew group algebras.
Moreover, we know from \cite[Corollary~5.2]{RR} if $G$ is abelian, then $A$ is Morita equivalent to $(A\ast G)\ast \Xi$, where $\Xi$ is the group of characters of $G$.
An interesting question is therefore the following: if $A$ is an algebra and $G$ a group acting on $A$, can we define  a group action of some  group $H$ on the skew group algebra $A\ast G$ such that the invariant subalgebra $(A\ast G)^H$ is Morita equivalent to $A$?

Further, we remark that under some conditions on $\Bbbk$, the following inequalities hold.
\begin{align*}
\Theta <_J \Bbbk [x]&/(x^2)<_J A_2\\
\Theta <_J &A_3'<_J A_2.
\end{align*}
However, we do not know whether the dual numbers and $A_3'$ are comparable in the two-sided preorder.
What we can say is that if they are comparable, then $A_3'<_J \Bbbk [x]/(x^2)$. This is because $T_3(A_3')$ is of tame type, whereas $T_3(\Bbbk [x]/(x^2)$ is of finite type.
The dual numbers has underlying quiver
\begin{displaymath}
\xymatrix{
\bullet \ar@(ur,dr)
}
\end{displaymath}
with relation that the arrow twice is zero. In particular, it is minimal with a cycle, so by Proposition~\ref{prop_surj}, any algebra of the form $\Bbbk Q/I$, where the quiver $Q$ has an oriented cycle, will satisfy $\Bbbk Q/I \leq_J \Bbbk [x]/(x^2)$.
Similarly, the Kronecker quiver is minimal with double arrows between distinct nodes, and  $A_3'$ and $(A_3')^\mathrm{op}$ are minimal without double arrow but with sink resp. source of degree strictly greater than 1.
To determine whether or not $\Bbbk [x]/(x^2)$ and $A_3'$ are $J$-comparable would therefore be very interesting.

Finally, we remark that we have no example of a situation where $A\geq_J B$ via bimodules ${}_AM_B$ and ${}_BN_A$ where ${}_AM$ and $N_A$ are non-projective.
Indeed, many of our examples of $J$-relation are even instances of separable extensions, $A\supseteq B$ is a separable extension if and only if $A$ is left-right projective as $B$-$B$-bimodule.
This includes the relation between an algebra and its subalgebra of invariants under a group action (Theorem~\ref{thm_group}), and skew group algebra (Theorem~\ref{thm_skew}), both which are also examples of split extensions.
Moreover, each $A_n$, $n\geq 2$, is a separable extension of the dual numbers, 
which is easily seen by the existence of a separability element (see e.g. \cite{Ka1}).
Also $\Bbbk [x]/(x^2)\geq_J \Theta$, which is not arising from neither a separable nor split extension, is induced by bimoduels which are projective as $\Bbbk [x]/(x^2)$-modules.
This  raises the question whether the one-sided projectivity of the bimodules inducing $J$-relation is a necessary condition.

\bigskip

\begin{center}
\textsc{Acknowledgements}
\end{center}
The author thanks Volodymyr Mazorchuk for many stimulating discussions, and for the original idea behind Theorem~\ref{thm_group}.

\vspace{5mm}

\noindent
Helena Jonsson, Department of Mathematics, Uppsala University, Box. 480,
SE-75106, Uppsala, SWEDEN, email: {\tt helena.jonsson\symbol{64}math.uu.se}

\end{document}